\theoremstyle{plain}
\newtheorem{theorem}{Theorem}
\newtheorem{lemma}[theorem]{Lemma}
\newtheorem{corollary}[theorem]{Corollary}
\newtheorem{proposition}[theorem]{Proposition}
\theoremstyle{definition}
\newtheorem{definition}[theorem]{Definition}
\theoremstyle{remark}
\renewcommand{\emptyset}{\varnothing}
\begin{document}

\title{Isotropic matroids III\@: Connectivity}
\author{Robert Brijder\\Hasselt University\\Belgium\\{\small {\texttt{robert.brijder@uhasselt.be}}}
\and Lorenzo Traldi\\Lafayette College\\Easton, Pennsylvania 18042, USA\\{\small {\texttt{traldil@lafayette.edu}}}}
\date{}
\maketitle

\begin{abstract}
The isotropic matroid $M[IAS(G)]$ of a graph $G$ is a binary matroid, which is equivalent to the isotropic system introduced by Bouchet. In this paper we discuss four notions of connectivity related to isotropic matroids and isotropic systems. We show that the isotropic system connectivity defined by Bouchet is equivalent to vertical connectivity of $M[IAS(G)]$, and if $G$ has at least four vertices, then $M[IAS(G)]$ is vertically 5-connected if and only if $G$ is prime (in the sense of Cunningham's split decomposition). We also show that $M[IAS(G)]$ is $3$-connected if and only if $G$ is connected and has neither a pendant vertex nor a pair of twin vertices. Our most interesting theorem is that if $G$ has $n\geq7$ vertices then $M[IAS(G)]$ is not vertically $n$-connected. This abstract-seeming result is equivalent to the more concrete assertion that $G$ is locally equivalent to a graph with a vertex of degree $<\frac{n-1}{2}$.

\bigskip

Keywords. circle graph, connectivity, degree, isotropic system, local equivalence, matroid, pendant, prime, split, twin

\bigskip

Mathematics Subject Classification. 05C31

\end{abstract}

\section{Introduction}

In this paper a \emph{graph} is a looped simple graph: each edge is incident on one or two vertices, and no two edges are incident on precisely the same vertices. An edge incident on just one vertex is a \emph{loop}. We use the terms \emph{adjacent} and \emph{neighbor} only in connection with non-loop edges; no vertex is its own neighbor, whether or not it is looped. We denote by $V(G)$ and $E(G)$ the set of vertices and edges of $G$, respectively. We use $n$ to denote $\left\vert V(G)\right\vert $ and $N_{G}(v)$ to denote the \emph{open neighborhood} $\{w\neq v\mid vw\in E(G)\}$.

We assume in this paper that the reader is familiar with basic notions of matroid theory, which can be found in \cite{O}. Also, in this paper the rows and columns of matrices are not ordered, but are instead indexed by some finite sets $X$ and $Y$, respectively. We refer to such a matrix as an $X \times Y$ matrix. The \emph{adjacency matrix} $A(G)$ of a graph $G$ is the $V(G)\times V(G)$ matrix over $GF(2)$ such that (1) if $v\in V(G)$, then the diagonal entry corresponding to $(v,v)$ is equal to $1$ if and only if $v$ is looped, and (2) if $v,w \in V(G)$ are distinct, then the entries corresponding to $(v,w)$ and $(w,v)$ are equal to $1$ if and only if $v$ and $w$ are neighbors. If $I$ denotes the $V(G)\times V(G)$ identity matrix, then the binary matroid represented by the matrix
\[
IAS(G)=%
\begin{pmatrix}
I & A(G) & A(G)+I
\end{pmatrix}
\]
is denoted $M[IAS(G)]$; we call $M[IAS(G)]$ the \emph{isotropic matroid} of $G$. Notice that our indexing convention assures us that for each $v \in V(G)$, the $v$ row of $IAS(G)$ is the concatenation of the $v$ rows of $I$, $A(G)$ and $A(G)+I$. The elements of $M[IAS(G)]$ corresponding to the $v$ columns of $I$, $A(G)$ and $A(G)+I$ are denoted $\phi(v)$, $\chi(v)$ and $\psi(v)$ respectively.

It is worth taking a moment to observe that the distinction between looped and unlooped vertices is more important in some parts of the theory than it is in others. For instance, looped vertices are quite useful in the discussion of~\cite{Tnewnew}, where isotropic matroids were introduced. In the present paper, though, we pay little attention to loops.

It was shown in~\cite{Tnewnew} that $M[IAS(G)]$ determines the delta-matroid and isotropic system associated with $G$. Delta-matroids and isotropic systems are combinatorial structures which have been studied by Andr\'{e} Bouchet and others for the last thirty years or so; we do not present detailed descriptions of them. 

A fundamental property of isotropic matroids (and isotropic systems) is that they detect a graph relation called local equivalence. That is, two graphs are locally equivalent if and only if their isotropic matroids (or isotropic systems) are isomorphic~\cite{Tnewnew}. Another fundamental property from~\cite{Tnewnew} is that isotropic matroids detect connectedness. That is, if $n \geq 2$ then $G$ is connected if and only if its isotropic matroid is connected if and only if its isotropic system is connected. (Connectedness of $M[IAS(G)]$ is the usual matroid idea; connectedness of isotropic systems is a more specialized notion.) The present paper concerns a question suggested by these properties: How is higher connectivity of $M[IAS(G)]$ reflected in the structure of $G$ and the graphs locally equivalent to $G$? 

The answer to our question is complicated by the fact that there are three different measures of connectivity of a matroid $M$: the (ordinary) connectivity $\tau(M)$, the cyclic connectivity $\kappa^{\ast}(M)$ and the vertical connectivity $\kappa(M)$. (We recall the definitions in Section~\ref{defs}.)

It turns out that nonempty isotropic matroids always have $\tau(M)=\kappa^{\ast}(M)$, and this value is determined in a simple way. Recall that if $v\neq w\in V(G)$, then $v$ and $w$ are \emph{twins} if they have $N_{G}(v)-\{w\}=N_{G}(w)-\{v\}$, and $v$ is \emph{pendant} on $w$ if $\{w\}=N_{G}(v)$.

\begin{theorem}
\label{cconnect}Every graph falls under precisely one of these four cases.

\begin{enumerate}

\item If $n=0$ then $\kappa^{\ast}(M[IAS(G)])=0$ and $\tau(M[IAS(G)])=\infty$.

\item If $n=1$ or $G$ is disconnected, then $\tau(M[IAS(G)])=\kappa^{\ast}(M[IAS(G)])=1$.

\item If $n>1$, $G$ is connected, and $G$ has a pendant vertex or a pair of twin vertices, then $\tau(M[IAS(G)])=\kappa^{\ast}(M[IAS(G)])=2$.

\item If $n>1$, $G$ is connected, and $G$ has neither a pendant vertex nor a pair of twin vertices, then $\tau(M[IAS(G)])=\kappa^{\ast}(M[IAS(G)])=3$.
\end{enumerate}
\end{theorem}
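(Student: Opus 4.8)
The plan is to handle the four cases in turn, in each matching an upper bound on $\tau$ coming from an explicit (cyclic) separation against a lower bound, and reading off $\kappa^{\ast}$ along the way.

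Cases 1 and 2 are essentially bookkeeping. If $n=0$ the isotropic matroid is empty, so $\tau(M[IAS(G)])=\infty$ and $\kappa^{\ast}(M[IAS(G)])=0$ by the conventions recalled in Section~\ref{defs}. If $n=1$ then $M[IAS(G)]$ is a $U_{1,2}$ together with a loop, which has an evident cyclic $1$-separation (the loop against the parallel pair). If $G$ is disconnected then $M[IAS(G)]$ is the direct sum of the isotropic matroids of its connected components, each of which is nonempty and contains a circuit, so splitting off one component gives a cyclic $1$-separation. In each case $\tau=\kappa^{\ast}=1$. For cases 3 and 4 the lower bound $\tau\ge2$ is immediate: by the result of~\cite{Tnewnew} quoted above, $G$ connected with $n\ge2$ makes $M[IAS(G)]$ a connected, hence loopless, matroid.

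Next I would prove the upper bounds by producing cyclic separations. Write $e_v$ for the $v$-column of $I$ and $A_v$ for the $v$-column of $A(G)$, so that $\phi(v),\chi(v),\psi(v)$ are $e_v,\,A_v,\,A_v+e_v$; these sum to $0$, so $T_v:=\{\phi(v),\chi(v),\psi(v)\}$ is a $3$-element circuit whose span is the plane $P_v:=\langle e_v,A_v\rangle$ (genuinely $2$-dimensional, since a connected graph with $n\ge2$ has no isolated vertex). A short computation shows the remaining columns $E(M[IAS(G)])\setminus T_v$ still span $GF(2)^{V(G)}$; hence $r(T_v)=2$, $r(E(M[IAS(G)])\setminus T_v)=n$, and $(T_v,\,E(M[IAS(G)])\setminus T_v)$ is a $3$-separation, cyclic because the complement is dependent ($3n-3>n$). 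Thus $\kappa^{\ast}\le3$, which already handles part of both remaining cases. If moreover $G$ has a vertex $v$ pendant on $w$, then $\chi(v)=e_w=\phi(w)$ (or $\psi(v)=\phi(w)$ when $v$ is looped); and if $G$ has a pair of twins $v,w$, a brief case check on adjacency and loop status shows one of $\chi(v),\psi(v)$ coincides with one of $\chi(w),\psi(w)$, and is nonzero. Either way $M[IAS(G)]$ has a parallel pair, and a parallel pair in a connected matroid with more than three elements yields a $2$-separation, again cyclic here since the complement is dependent; so $\kappa^{\ast}\le2$ in case 3. Since a cyclic separation is in particular a separation, $\tau\le\kappa^{\ast}$ in cases 2--4, and case 3 is complete: $\tau=\kappa^{\ast}=2$.

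The substance is the lower bound $\tau\ge3$ in case 4. Suppose $(X,Y)$ is a $2$-separation of $M[IAS(G)]$. Connectedness forces $r(X)+r(Y)=n+1$, that is $\dim(\langle X\rangle\cap\langle Y\rangle)=1$ (using that the columns of $IAS(G)$ span $GF(2)^{V(G)}$). For each $v$, exactly one of $|X\cap T_v|$ and $|Y\cap T_v|$ exceeds $1$; let $S$ be the set of vertices $v$ with $|X\cap T_v|\ge2$, so that $\langle X\cap T_v\rangle=P_v$ for $v\in S$. Then $W:=\sum_{v\in S}P_v\subseteq\langle X\rangle$ and $W':=\sum_{v\notin S}P_v\subseteq\langle Y\rangle$, while $W+W'=GF(2)^{V(G)}$. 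Reducing modulo the coordinate subspace spanned by $\{e_v:v\in S\}$ yields the block-rank identity $\dim W=|S|+\operatorname{rank}A(G)[S,V(G)\setminus S]$, and symmetrically $\dim W'=(n-|S|)+\operatorname{rank}A(G)[S,V(G)\setminus S]$; summing and using $W+W'=GF(2)^{V(G)}$ gives $\dim(W\cap W')=2\operatorname{rank}A(G)[S,V(G)\setminus S]$, which is at most $\dim(\langle X\rangle\cap\langle Y\rangle)=1$ and hence $0$. So $A(G)$ has no edge between $S$ and $V(G)\setminus S$, and connectedness of $G$ forces $S=\emptyset$ or $S=V(G)$. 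By symmetry assume $S=V(G)$; then $\langle X\rangle=GF(2)^{V(G)}$, so $r(Y)=\dim(\langle X\rangle\cap\langle Y\rangle)=1$, and (as $M[IAS(G)]$ is loopless and $|Y|\ge2$) $Y$ is a set of at least two mutually parallel elements lying in distinct triples $T_v$. Finally, inspecting the possible coincidences of columns across two distinct triples — $e_v=A_w$, $A_v=A_w$, $A_v=A_w+e_w$, and so on — shows each of them forces $v$ and $w$ to be a pendant pair or a twin pair, contradicting the hypothesis of case 4. Hence $M[IAS(G)]$ has no $2$-separation, $\tau\ge3$, and together with the cyclic $3$-separation exhibited above, $\tau=\kappa^{\ast}=3$. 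I expect this block-rank argument, and the closing analysis of parallel columns, to be the only real difficulty; the remaining verifications are routine.
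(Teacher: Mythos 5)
Your proof is correct, and the two easy directions (the conventions for $n=0$, the $1$-separations for $n=1$ or disconnected $G$, the vertex triple $\tau_G(v)$ as a $3$-separation, and the parallel pair coming from a pendant or twin pair) coincide with the paper's. Where you genuinely diverge is the substantive step, the lower bound $\tau\geq 3$ in case 4. The paper analyzes an arbitrary $2$-separation $S$ through the set $S_{\phi}$ of vertices whose $\phi$-element lies in $S$, using the asymmetric bound $r(S)\geq|S_{\phi}|$ (with equality forcing all neighbors of the $\chi/\psi$-vertices of $S$ into $S_{\phi}$) and then a hands-on argument about edges crossing $S_{\phi}$, ending with either twins (when $S_{\phi}=V(G)$) or a pendant vertex (when $S_{\phi}=\{v\}$). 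You instead symmetrize: you pass to the set of vertices whose triple is majority-contained in one side, derive the identity $\dim\bigl(\sum_{v\in S}\langle\phi(v),\chi(v)\rangle\bigr)=|S|+r(A(G)[V(G)-S,S])$, and conclude from $\lambda=1$ that the cut-rank vanishes, so connectivity collapses the separation to a rank-one side, i.e.\ a parallel pair across distinct triples. This is in effect the cut-rank machinery the paper only develops later (Lemma~\ref{lem:trans_cut_rank} and the triple-filling argument of Proposition~\ref{puresep}) specialized to ordinary $2$-separations; it is arguably more conceptual and explains the ``$\lambda=2c_G$'' phenomenon early, while the paper's Section~\ref{csep} argument is more elementary matrix bookkeeping that avoids invoking cut-rank at all. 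One small imprecision: your blanket assertion that ``a cyclic separation is in particular a separation'' is not true for general $k$ (a dependent set may have fewer than $k$ elements); what you actually need, and what holds here, is that in the loopless matroid $M[IAS(G)]$ every cyclic $2$-separation is an ordinary $2$-separation (dependent sets have size $\geq 2$) and your exhibited cyclic $2$- and $3$-separations have both sides large enough to be ordinary ones, so the conclusion $\tau=\kappa^{\ast}$ in cases 2--4 stands.
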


The inequality $\kappa(M)\geq\tau(M)$ holds for all nonempty isotropic matroids. The vertical connectivity is more interesting than the (cyclic) connectivity; it attains a broader range of values and is related to finer details of graph and matroid structure.

Recall the following definition of Cunningham~\cite{Cu}.

\begin{definition}
\label{prime}(\cite{Bconn,Cu}) A \emph{split} $(V_{1},W_{1};V_{2},W_{2})$ of a graph
$G$ is a partition $V(G)$ $=V_{1}\cup V_{2}$ such that $\left\vert V_{1}%
\right\vert ,\left\vert V_{2}\right\vert \geq2$, 
$W_{i}\subseteq V_{i}$ and every $v\in V_{1}$ has
\[
N_{G}(v)\cap V_{2}=
\begin{cases}
W_{2}\text{, if }v\in W_{1}\text{ }\\
\emptyset\text{, if }v\in V_{1}-W_{1}
\end{cases}
\text{.}%
\]
If $G$ has no split, then $G$ is said to be~\emph{prime}.
\end{definition}

Notice that according to Definition \ref{prime}, all graphs of order $\leq3$ are prime. This convention is not universal; some references explicitly require prime graphs to be of order $>3$. Notice also that Definition \ref{prime} can be restated using matrices: $G$ has a split with respect to $V_1$ and $V_{2}=V(G)-V_1$ if and only if  $\left\vert V_{1} \right\vert ,\left\vert V_{2}\right\vert \geq2$ and $r(A(G)[V_1,V_2]) \leq 1$, where $A(G)[V_1,V_2]$ is the submatrix of $A(G)$ involving rows from $V_1$ and columns from $V_2$, and $r$ denotes the rank over $GF(2)$.  

If $n\geq4$ then every pair of twin vertices $v$ and $w$
yields a split with $V_{1}=\{v,w\}$, and every pair consisting of a pendant
vertex $v$ and its neighbor $w$ yields a split with $V_{1}=\{v,w\}$. Moreover, if $G$ is a disconnected graph with a connected component $C$, then $G$ has a split with $V_{1}=V(C)$ if $\left\vert V(C)\right\vert, \left\vert V(G) - V(C)\right\vert \geq 2$; and  $G$ has a
split with $V_{1}=V(C)\cup\{x\}$ if $\left\vert V(G)-V(C)\right\vert >2$ and $x \in V(G) - V(C)$. Every connected 4-vertex graph has a pendant vertex or a pair of twins, so there is no prime
graph with $n=4$. Bouchet \cite{Bec} proved that every prime 5-vertex graph is
locally equivalent to the cycle graph $C_{5}$, and a special case of Bouchet's
obstructions theorem for circle graphs \cite{Bco} is that every prime 6-vertex
graph is locally equivalent to either $C_{6}$ or the wheel graph $W_{5}$.

\begin{theorem}
\label{vconnect}Every graph falls under precisely one of these five cases.

\begin{enumerate}

\item If $n\leq3$ and $G$ is connected, then $\kappa(M[IAS(G)])=n.$

\item If $G$ is disconnected then $\kappa(M[IAS(G)])=1$.

\item If $n\geq4$, $G$ is connected, and $G$ is not prime, then $\kappa(M[IAS(G)])=3$.

\item If $n\geq5$, $G$ is prime, and $\kappa(M[IAS(G)])<n$, then $\kappa(M[IAS(G)])$ is odd and $\geq 5$.

\item If $n\geq5$ and $\kappa(M[IAS(G)])=n$, then $G$ is locally equivalent to either the cycle graph $C_{5}$ or the wheel graph $W_{5}$.
\end{enumerate}
\end{theorem}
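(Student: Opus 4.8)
The plan is to treat the five cases separately. I would use throughout that $r(M[IAS(G)])=n$ (since $IAS(G)$ contains $I$), that $M[IAS(G)]$ is connected exactly when $G$ is, that $\phi(v)+\chi(v)+\psi(v)=0$ for every vertex $v$ (so each triple $\{\phi(v),\chi(v),\psi(v)\}$ spans a line and is a possibly degenerate $3$-circuit), and that a vertical separation of $M[IAS(G)]$ is a matroid invariant, hence is unchanged when $G$ is replaced by a locally equivalent graph. Write $\lambda(X)=r(X)+r(E\setminus X)-n$ for the connectivity function (Section~\ref{defs}). Cases~1 and~2 are bookkeeping. For Case~2, the $3\lvert V(C)\rvert$ elements coming from a connected component $C$ of $G$, together with their complement, form a partition with $\lambda=0$ and both sides of positive rank, so $M[IAS(G)]$ has a vertical $1$-separation and $\kappa(M[IAS(G)])=1$. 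For Case~1, when $n\le 3$ and $G$ is connected I would check directly that the columns of $IAS(G)$ already realise every nonzero point of $PG(n-1,2)$; consequently $M[IAS(G)]$ has no vertical separation and $\kappa(M[IAS(G)])=r(M[IAS(G)])=n$.

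The substance is Cases~3 and~4, which I would obtain together from a dictionary between vertical separations of $M[IAS(G)]$ and splits of $G$. One direction is a short computation: for any bipartition $V(G)=V_1\cup V_2$, the triple-respecting partition $X=\bigcup_{v\in V_1}\{\phi(v),\chi(v),\psi(v)\}$, $Y=\bigcup_{v\in V_2}\{\phi(v),\chi(v),\psi(v)\}$ has $r(X)=\lvert V_1\rvert+t$ and $r(Y)=\lvert V_2\rvert+t$, where $t=r(A(G)[V_1,V_2])$, so $\lambda(X)=2t$; in particular every triple-respecting partition has even $\lambda$. If $G$ is connected and not prime it has a split, namely a bipartition with $\lvert V_1\rvert,\lvert V_2\rvert\ge 2$ and necessarily $t=1$, and the associated partition is then a vertical $3$-separation, so $\kappa(M[IAS(G)])\le 3$. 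The other direction --- the technical heart --- is to show that a vertical separation of $M[IAS(G)]$ of order at most $4$ can be pushed towards a triple-respecting one by transferring whole or partial triples, the $3$-circuit relations keeping $\lambda$ from rising, so that any such separation forces $r(A(G)[V_1,V_2])\le 1$ for some bipartition with both parts of size $\ge 2$, i.e.\ a split; applied to a hypothetical vertical $1$- or $2$-separation of a connected $G$, the same argument shows none exists, which gives the bound $\kappa(M[IAS(G)])\ge 3$ needed in Cases~1 and~3. (Equivalently, one can route this through the paper's identification of $\kappa(M[IAS(G)])$ with Bouchet's isotropic system connectivity.) Putting the two directions together: for $G$ with $n\ge 4$, $M[IAS(G)]$ is vertically $5$-connected if and only if $G$ is prime, and for connected non-prime $G$ with $n\ge 4$, $\kappa(M[IAS(G)])=3$, which is Case~3.

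For Case~4 it remains to prove that $\kappa(M[IAS(G)])$ is odd whenever it is less than $n$. Triple-respecting separations have even order and every vertical separation rounds towards one, so the only way $\kappa$ could be even would be for a rounding step to lower $\lambda$ by one while simultaneously dropping a side's rank to exactly the threshold; the hard part will be to track when this happens, which I expect to be controlled by the symplectic structure underlying the isotropic system that $M[IAS(G)]$ encodes and which forces the order of an extremal vertical separation to be odd. Together with the previous paragraph, a prime graph with $n\ge 5$ and $\kappa(M[IAS(G)])<n$ then has $\kappa(M[IAS(G)])$ odd and $\ge 5$.

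Finally, Case~5 follows from the rest together with the theorem that for $n\ge 7$ the graph $G$ is locally equivalent to a graph with a vertex of degree $<\tfrac{n-1}{2}$. If $\kappa(M[IAS(G)])=n\ge 5$, then $M[IAS(G)]$ is vertically $5$-connected, so $G$ is prime; but if $n\ge 7$ then $G$ is locally equivalent to some $H$ with a vertex $u$ of degree $d<\tfrac{n-1}{2}$, and taking $V_1=\{u\}\cup N_H(u)$ gives $r(A(H)[V_1,V_2])\le d$, hence a vertical separation of $M[IAS(H)]=M[IAS(G)]$ witnessing $\kappa(M[IAS(G)])\le 2d+1<n$ --- a contradiction. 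Thus $n\in\{5,6\}$. For $n=5$, Bouchet's result \cite{Bec} that every prime $5$-vertex graph is locally equivalent to $C_5$ finishes the case. For $n=6$, Bouchet's obstructions theorem \cite{Bco} gives that $G$ is locally equivalent to $C_6$ or $W_5$, and a single finite check --- exhibiting a vertical $5$-separation of $M[IAS(C_6)]$ --- eliminates $C_6$, leaving $W_5$. The main obstacle throughout is the rounding/structure argument of the second paragraph (equivalently, the equivalence of vertical $5$-connectedness with primeness, and its parity refinement); Cases~3, 4, and, through the degree theorem, 5 all rest on it.
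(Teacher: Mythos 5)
Your framework for cases 1--4 is essentially the paper's: the triple-respecting computation $\lambda(\tau_G(X))=2\,r(A[V_1,V_2])$ is Lemma~\ref{lem:trans_cut_rank}, and the ``pushing toward a triple-respecting separation'' step is Proposition~\ref{puresep}. But in case 4 you stop short of an argument: you correctly identify that the danger is a rounding step lowering $\lambda$ by one, and then defer to ``the symplectic structure,'' which is not a proof. The resolution is simpler and lies inside your own setup: apply the absorption of vertex triples to a vertical separation attaining $k=\kappa(M)<n$. Absorbing a triple leaves $r(S)$ unchanged and lowers $r(W-S)$ by at most one; if it lowered it, the new set would be a vertical $(k-1)$-separation, contradicting minimality of $k$. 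Hence the process terminates in a triple-respecting vertical $k$-separation $\tau_G(X)$, and minimality together with Proposition~\ref{purechar} forces $k=2c_G(X)+1$, which is odd; the same argument disposes of hypothetical vertical $1$- and $2$-separations of connected graphs. As written, your parity claim is a gap, though one with a one-line fix.

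The genuine gap is in case 5. You invoke, as a known theorem, the statement that every graph with $n\ge 7$ is locally equivalent to a graph with a vertex of degree $<\frac{n-1}{2}$. In this paper that statement is Corollary~\ref{lowdeg}, which is presented in the introduction as a \emph{consequence} of Theorems~\ref{vconnect} and~\ref{degree} --- the very results at issue --- so citing it as a black box is circular unless it is proved independently. The paper's independent proof of it is the bulk of the technical work: Theorem~\ref{halfcirc} (every isotropic matroid on $n\ge 7$ vertices has a transverse circuit of size $\le \frac n2$), established by computer verification for $n\in\{7,9\}$ and a lengthy combinatorial analysis (the even case, $n\ge 13$, and the delicate $n=11$ configuration), combined with Theorem~\ref{expdegree} to translate small transverse circuits into small vertex degrees and low vertical connectivity. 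Your proposal contains none of this, so the step ``$\kappa(M[IAS(G)])=n\ge 5$ implies $n<7$'' is unsupported. The remainder of your case 5 is fine, and your elimination of $C_6$ by exhibiting a vertical $5$-separation directly (three consecutive vertices have cut-rank $2$) is a legitimate, slightly more elementary alternative to the paper's route, which instead argues that $G$ cannot be a circle graph and applies Bouchet's obstruction theorem.
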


Theorems \ref{cconnect} and \ref{vconnect} indicate five significant differences between the (cyclic) connectivity and the vertical connectivity of an isotropic matroid. The first difference is that for $n\geq 1$, $\kappa^{\ast}(M[IAS(G)])$ and $\tau(M[IAS(G)])$ are always $\leq 3$ while $\kappa(M[IAS(G)])$ has a greater range of variation. (Some examples with $\kappa(M[IAS(G)])>5$ are discussed in Section~\ref{circle}.)

The second difference is that $\kappa(M[IAS(G)])$ is usually odd. (Indeed, up to isomorphism there are only three isotropic matroids whose vertical connectivity is even.) This odd parity is reflected in the fact that the vertical connectivity of $M[IAS(G)]$ is closely related to a notion of connectivity for isotropic systems introduced by Bouchet~\cite{Bconn}. We refer to the connectivity of the isotropic system with fundamental graph $G$ as the \emph{isotropic connectivity} of $G$, denoted $\kappa_{B}(G)$. The definition of $\kappa_{B}(G)$ is discussed in Section~\ref{vsep}, where we prove the following.

\begin{theorem}
\label{Bsep}If $G$ is a graph then the isotropic connectivity of $G$ and the vertical connectivity of $M[IAS(G)]$ are related as follows.
\[
\kappa_{B}(G)=
\begin{cases}
\frac{\kappa(M[IAS(G)])+1}{2} & \text{if }\kappa(M[IAS(G)])<n\\
\infty & \text{otherwise}
\end{cases}
\]
\end{theorem}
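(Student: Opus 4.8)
The plan is to match separations of $V(G)$ measured by the cut-rank function $\rho_G(A):=r(A(G)[A,V(G)-A])$ with vertical separations of $M[IAS(G)]$. When $n\le 1$ or $G$ is disconnected, both quantities are easily determined directly (cf.\ Theorem~\ref{vconnect}), so I may assume that $G$ is connected with $n\ge 2$; then for each vertex $v$ the elements $\phi(v),\chi(v),\psi(v)$ are distinct, nonzero, and sum to $0$, so $\{\phi(v),\chi(v),\psi(v)\}$ is a triangle of $M[IAS(G)]$. I shall use the description (recalled in this section) under which, for $\emptyset\neq A\subsetneq V(G)$, the cut $A$ determines an isotropic separation of $G$ of order $\rho_G(A)+1$ precisely when $\rho_G(A)<\min\{|A|,|V(G)-A|\}$, so that $\kappa_B(G)$ is the least order of such a separation, and $\kappa_B(G)=\infty$ when no such $A$ exists.

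The crucial computation is that, for $B\subseteq V(G)$, the rank in $M[IAS(G)]$ of $X_B:=\{\phi(v),\chi(v),\psi(v)\mid v\in B\}$ equals $|B|+\rho_G(B)$. Indeed, because $\chi(v)=\phi(v)+\psi(v)$, the span of $X_B$ is generated by the standard basis vectors indexed by $B$ together with the $B$-columns of $A(G)$; reducing the latter modulo the former leaves the columns of $A(G)[V(G)-B,B]$, of rank $\rho_G(V(G)-B)=\rho_G(B)$. Hence $r(X_B)+r(X_{V(G)-B})-n=2\rho_G(B)$, and so, whenever $B$ determines an isotropic separation, $(X_B,X_{V(G)-B})$ is a vertical $\bigl(2\rho_G(B)+1\bigr)$-separation of $M[IAS(G)]$: the conditions $r(X_B),r(X_{V(G)-B})\ge 2\rho_G(B)+1$ follow from the rank formula together with $|B|,|V(G)-B|\ge\rho_G(B)+1$. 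Minimising over $B$ yields $\kappa(M[IAS(G)])\le 2\kappa_B(G)-1$ when $\kappa_B(G)<\infty$, hence $\kappa_B(G)\ge\tfrac12\bigl(\kappa(M[IAS(G)])+1\bigr)$ in every case; and since each isotropic separation of $G$ produces a vertical separation of $M[IAS(G)]$, the equality $\kappa(M[IAS(G)])=n$ (i.e.\ no vertical separation) forces $\kappa_B(G)=\infty$.

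For the reverse inequality the key assertion is: \emph{if $\kappa(M[IAS(G)])=k<n$, then $M[IAS(G)]$ has a vertical $k$-separation $(X,Y)$ respecting triples}, meaning that each $\{\phi(v),\chi(v),\psi(v)\}$ lies wholly in $X$ or wholly in $Y$. Granting this, one writes the separation as $(X_A,X_{V(G)-A})$ with $A=\{v\mid\{\phi(v),\chi(v),\psi(v)\}\subseteq X\}$. The rank formula gives $2\rho_G(A)=r(X_A)+r(X_{V(G)-A})-n\le k-1$ and $|A|+\rho_G(A)=r(X_A)\ge k$, so $|A|\ge k-\rho_G(A)\ge\rho_G(A)+1$, and likewise $|V(G)-A|\ge\rho_G(A)+1$; thus $A$ determines an isotropic separation of order $\rho_G(A)+1\le\tfrac12(k+1)$, whence $\kappa_B(G)\le\tfrac12\bigl(\kappa(M[IAS(G)])+1\bigr)$. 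Together with the previous paragraph this gives the stated formula when $\kappa(M[IAS(G)])<n$, and the case $\kappa(M[IAS(G)])=n$ was already handled.

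To prove the key assertion, I would take a vertical $k$-separation of $M[IAS(G)]$ with the fewest split triples, and suppose some $\{\phi(v),\chi(v),\psi(v)\}$ is split; let $a$ be the element of this triple lying alone on its side and $b,c$ the two on the other side, so that $a=b+c$ and moving $a$ across does not change the rank of the other side. If moving $a$ across also leaves the rank of its own side unchanged, the result is a vertical $k$-separation with fewer split triples, contradicting the choice. If that rank instead drops by $1$, then $r(X)+r(Y)-n$ drops by $1$ while both parts keep rank $\ge k-1$, so the result is a vertical $(k-1)$-separation, contradicting $\kappa(M[IAS(G)])=k$. Hence no triple is split. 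I expect the chief obstacle to be conceptual rather than computational: lining up the definition of $\kappa_B(G)$ coming from isotropic systems with the cut-rank description used above --- identifying Bouchet's border function with $\rho_G$ and fixing his separation convention so that the cut $A$ has order exactly $\rho_G(A)+1$ --- after which the rank identity and the ``fewest split triples'' argument (whose only subtle point is to close the rank-dropping case by passing to a vertical $(k-1)$-separation rather than insisting on staying at level $k$) complete the proof.
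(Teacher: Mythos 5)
Your proposal is correct and follows essentially the same route as the paper: the rank identity $r(\tau_G(B))=|B|+c_G(B)$ (hence $\lambda(\tau_G(B))=2c_G(B)$), the observation that a cut with $c_G(B)<\min\{|B|,|V(G)-B|\}$ yields a vertical $(2c_G(B)+1)$-separation, and the absorption of split vertex triples into a minimal vertical separation, where the rank-dropping case is closed off by producing a vertical $(k-1)$-separation. The only differences are cosmetic (an extremal ``fewest split triples'' phrasing instead of ``repeat until none remain,'' and handling the disconnected/small cases up front rather than inside the absorption argument).
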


The third significant difference between Theorems \ref{cconnect} and \ref{vconnect} is that $\kappa^{\ast}(M[IAS(G)])$ and $\tau(M[IAS(G)])$ detect only pendant and twin vertices, while $\kappa(M[IAS(G)])$ detects arbitrary splits. The fact that split graphs with 5 or more vertices are singled out by the inequality $\kappa(M[IAS(G)]))\leq3$ may be deduced from Theorem \ref{Bsep} and Bouchet's result that split graphs are singled out by the inequality $\kappa_{B}(G)\leq2$ \cite[Theorem 11]{Bconn}. We provide the straightforward proofs of these results in Sections~\ref{vsep} and \ref{sec:char_max_connect}. 

The fourth significant difference between Theorems \ref{cconnect} and \ref{vconnect}, the singling out of $C_{5}$ and $W_{5}$ in case 5 of Theorem \ref{vconnect}, is more difficult to prove. The significance of case 5 is illuminated by the two following results.

\begin{theorem}
\label{degree}Let $G$ be a graph with $n\geq4$ vertices. Then $\kappa(M[IAS(G)])<n$ if and only if some graph locally equivalent to $G$ has a vertex of degree $<\frac{n-1}{2}$.
\end{theorem}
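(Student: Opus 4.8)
The plan is to recast vertical separations of $M[IAS(G)]$ in terms of the cut-rank function $\rho_G(U):=r\bigl(A(G)[U,\bar U]\bigr)$ (the $GF(2)$-rank, where $\bar U:=V(G)-U$), dispatch the easy (``if'') direction by an explicit separation, and prove the hard (``only if'') direction by a local-equivalence manoeuvre inside the isotropic system attached to $G$. I would begin with three facts. (i) Writing $T(S)=\{\phi(u),\chi(u),\psi(u):u\in S\}$, the span of $T(S)$ contains $e_u$ for every $u\in S$ and, modulo these, each $\chi(u)$ contributes the $u$-column of $A(G)[\bar S,S]$; hence $r(T(S))=|S|+\rho_G(S)$, so $(T(U),T(\bar U))$ has connectivity $2\rho_G(U)$ in $M[IAS(G)]$ and is a vertical $\bigl(2\rho_G(U)+1\bigr)$-separation precisely when $\rho_G(U)<\min(|U|,|\bar U|)$. (ii) $\rho_G$ is a local-equivalence invariant: a local complementation at a vertex $v$ alters $A(G)[U,\bar U]$ by the rank-$\le1$ matrix $\mathbf 1_{N_G(v)\cap U}\,\mathbf 1_{N_G(v)\cap\bar U}^{\mathsf T}$, whose row space (if $v\in U$) or column space (if $v\in\bar U$) is already spanned by the $v$-row, respectively $v$-column, of $A(G)[U,\bar U]$. (iii) Combining Theorem~\ref{Bsep} with Bouchet's characterization of isotropic connectivity \cite{Bconn} and the analysis of Sections~\ref{vsep} and \ref{sec:char_max_connect}, $\kappa(M[IAS(G)])<n$ holds if and only if some $U$ with $\emptyset\ne U\subsetneq V(G)$ satisfies $\rho_G(U)<\min(|U|,|\bar U|)$.

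Given these, the ``if'' direction is immediate. If $H$ is locally equivalent to $G$ and $d:=|N_H(v)|<\tfrac{n-1}{2}$, put $U=\{v\}\cup N_H(v)$; the $v$-row of $A(H)[U,\bar U]$ vanishes, so $\rho_H(U)\le d<\min(d+1,\,n-1-d)=\min(|U|,|\bar U|)$. By (i) this yields a vertical separation of $M[IAS(H)]\cong M[IAS(G)]$ of order $\le 2d+1<n$, so $\kappa(M[IAS(G)])=\kappa(M[IAS(H)])<n$.

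For the ``only if'' direction I would fix, using (iii), a set $U$ with $\rho_G(U)<\min(|U|,|\bar U|)$ and, after swapping $U$ with $\bar U$ if needed, $|U|\le n/2$, and aim to produce a graph $H$ locally equivalent to $G$ together with a vertex $u$ of $H$ satisfying $N_H(u)\subseteq U\setminus\{u\}$; this suffices, since then $|N_H(u)|\le|U|-1\le n/2-1<\tfrac{n-1}{2}$. Here I would invoke the isotropic-system picture: $M[IAS(G)]$ records a Lagrangian $L$ of the symplectic space $\bigoplus_v GF(2)^2$, the graphs locally equivalent to $G$ correspond to the completely decomposable Lagrangian complements $B=\bigoplus_v\langle\mathbf b_v\rangle$ of $L$, and for such a $B$ the vertex $v$ of the associated graph $H$ has $\{v\}\cup N_H(v)$ equal to the support of the unique $\mathrm{row}^H_v\in L$ with $\mathrm{row}^H_v\equiv\mathbf a_v\pmod B$ (where $\mathbf a_v$ completes $\mathbf b_v$ to a basis of the $v$-th factor). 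Now $\rho_G(U)<|U|$ says exactly that $L$ has a nonzero vector supported on $U$; I would take such an $x$ whose support $U_1:=\mathrm{supp}(x)\subseteq U$ has minimum size, fix $u\in U_1$, and note that minimality forces $L\cap\bigoplus_{v\in U_1\setminus\{u\}}\langle x_v\rangle=0$, since a nonzero element of that intersection would be a nonzero member of $L$ supported on a proper subset of $U_1$. The closing move is to extend the totally isotropic subspace $\bigoplus_{v\in U_1\setminus\{u\}}\langle x_v\rangle$ to a completely decomposable Lagrangian complement $B$ of $L$ with $x_u\notin B$; the corresponding graph $H$ is then locally equivalent to $G$, a short computation gives $\mathrm{row}^H_u=x$, and therefore $\{u\}\cup N_H(u)=U_1\subseteq U$, as required.

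The main obstacle is exactly that extension step: starting from a partially decomposable totally isotropic subspace $B_0$ with $B_0\cap L=0$, build a completely decomposable Lagrangian complement of $L$ that contains $B_0$ yet avoids a prescribed vector. This is symplectic linear algebra over $GF(2)$ in the spirit of Bouchet's theorem that every isotropic system has a fundamental graph, but it requires care: choosing the lines $\langle\mathbf b_v\rangle$ greedily, one vertex at a time, can stall precisely when the running span $L+B_{\mathrm{cur}}$ absorbs an entire factor $GF(2)^2$. I expect the fix to be a dimension count showing that, with the processing order of the vertices, the minimal-support vector $x$, and the distinguished vertex $u$ chosen suitably, no such stall occurs --- equivalently, that the local-equivalence class of $G$ is rich enough to realize $x$ as a vertex row whose support remains inside $U$. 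The remaining ingredients --- the rank identity, invariance of $\rho_G$, and both easy implications --- are routine.
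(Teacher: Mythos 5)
Your reductions (i)--(iii) and the ``if'' direction are sound, and they track the paper's own machinery: (i) is Lemma~\ref{lem:trans_cut_rank} together with Proposition~\ref{purechar}, and (iii) is Corollary~\ref{vconn}. Your observation that $\rho_G(U)<|U|$ yields a nonzero vector of $L$ supported in $U$, and that a minimum-support such vector $x$ meets $\bigoplus_{v\in U_1\setminus\{u\}}\langle x_v\rangle$ only in $0$, is also correct; in the paper's language, such an $x$ is exactly a transverse circuit contained in $\tau_G(U)$, which is how the paper proves ``condition 4 implies condition 3'' of Theorem~\ref{expdegree}.

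The genuine gap is the step you yourself flag as the ``main obstacle'': extending the partially decomposable totally isotropic subspace $B_0=\bigoplus_{v\in U_1\setminus\{u\}}\langle x_v\rangle$ to a completely decomposable Lagrangian complement of $L$ avoiding $x_u$, so that $x$ becomes the fundamental vector (neighborhood circuit) of $u$ in some locally equivalent graph $H$. You only assert that ``a dimension count'' should prevent the greedy construction from stalling; no such count is given, and this is precisely the nontrivial content of the hard direction. The paper does not reprove this realization statement either: it invokes the theory of \cite{BT1}, in the form of the proposition that if $q$ is the minimum size of a transverse circuit of $M[IAS(G)]$ then $q-1$ is the minimum vertex degree over the local equivalence class of $G$. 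So as written your argument is incomplete exactly where the paper leans on \cite{BT1}. For what it is worth, the missing lemma is true and can be closed without any careful ordering: if at some vertex $v$ two distinct nonzero values of $A(v)$ both failed, the space $L\cap\bigl(\hat A(X)\oplus K e_v\bigr)$ would contain two vectors with distinct nonzero $v$-components; their symplectic product is $0$ at every coordinate of $X$ (both components lie in $\{0,A(w)\}$) and $1$ at $v$, contradicting total isotropy of $L$. Hence at most one nonzero value is forbidden at each vertex, and at $u$ that forbidden value is exactly $x_u$ (the relevant intersection is $\langle x\rangle$ by minimality of $U_1$), which is what you need. Either supply this argument or cite the corresponding result of \cite{BT1}; without one of the two, the ``only if'' direction is not proved.
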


As $\left\vert V(C_{5})\right\vert =5$ and $\left\vert V(W_{5})\right\vert=6$, Theorems \ref{vconnect} and \ref{degree} imply a striking property of local equivalence.

\begin{corollary}
\label{lowdeg}Let $G$ be a graph with $n\geq7$ vertices. Then $G$ is locally equivalent to a graph with a vertex of degree $<\frac{n-1}{2}$.
\end{corollary}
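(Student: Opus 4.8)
The plan is to obtain Corollary~\ref{lowdeg} directly from Theorems~\ref{vconnect} and \ref{degree}; there is no substantial new argument, so the proof will be short.

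By Theorem~\ref{degree} (whose hypothesis $n\geq 4$ is satisfied since $n\geq 7$), it suffices to show that $\kappa(M[IAS(G)])<n$. So I would locate $G$ among the five cases of Theorem~\ref{vconnect}. If $G$ is disconnected, then case~2 gives $\kappa(M[IAS(G)])=1<n$ and we are done. If $G$ is connected, then since $n\geq 7>3$, cases~1 and 2 do not apply. Case~5 also cannot apply: it would force $G$ to be locally equivalent to $C_{5}$ or $W_{5}$, but local equivalence preserves the number of vertices (local complementation fixes the vertex set, and equivalently isomorphic isotropic matroids have ground sets of the same size $3n$), whereas $|V(C_{5})|=5$ and $|V(W_{5})|=6$ are both less than $n$. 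Hence $G$ falls under case~3 or case~4, and in either case $\kappa(M[IAS(G)])<n$, which is what we needed.

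The only points requiring any care are the routine ones just used: that the five cases of Theorem~\ref{vconnect} are genuinely exhaustive, that local equivalence does not change the number of vertices, and that Theorem~\ref{degree} applies for every $n\geq 4$. Since the real content is already carried by Theorems~\ref{vconnect} and \ref{degree}, I do not expect any genuine obstacle in assembling Corollary~\ref{lowdeg}; the step most worth stating explicitly is the reduction via Theorem~\ref{degree} to the purely matroidal inequality $\kappa(M[IAS(G)])<n$.
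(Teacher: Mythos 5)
Your deduction is valid as a formal implication, and it matches the wording of the introduction, where the corollary is presented as a consequence of Theorems~\ref{vconnect} and~\ref{degree}. But as a proof it is circular with respect to the paper's actual logical structure: the step you rely on to rule out $\kappa(M[IAS(G)])=n$, namely case~5 of Theorem~\ref{vconnect}, is itself proved in Section~\ref{sec:char_max_connect} by \emph{using} Corollary~\ref{lowdeg}. The authors say explicitly that they prove the corollary first and then deduce statement~5 from it (putting the word corollary in quotation marks for exactly this reason), and the subsection treating case~5 opens by combining Theorem~\ref{degree} with Corollary~\ref{lowdeg} to conclude that $\kappa(M[IAS(G)])=n$ forces $n<7$. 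So the assertion you invoke --- that no graph with $n\geq 7$ attains $\kappa(M[IAS(G)])=n$, equivalently that case~5 involves only $C_5$ and $W_5$ --- is essentially the statement to be proved, and your argument supplies no independent proof of it.

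The genuine content is elsewhere. The paper proves Theorem~\ref{halfcirc}: every graph with $n\geq 7$ vertices has a transverse circuit of size $\leq\frac{n}{2}$ in $M[IAS(G)]$ (computer verification for $n=7,9$, a rank argument for even $n$, and delicate counting arguments for odd $n\geq 11$, with $n=11$ requiring a detailed case analysis). Corollary~\ref{lowdeg} then follows from the equivalence of conditions~1 and~2 of Theorem~\ref{expdegree}: a transverse circuit of size $q\leq\frac{n}{2}<\frac{n+1}{2}$ yields a graph locally equivalent to $G$ with a vertex of degree $q-1<\frac{n-1}{2}$. Your reduction via Theorem~\ref{degree} to the inequality $\kappa(M[IAS(G)])<n$ is fine, but to discharge that inequality you would need either an argument along the lines of Theorem~\ref{halfcirc} or an independent proof of case~5 of Theorem~\ref{vconnect} that does not pass through Corollary~\ref{lowdeg}; as written, the proposal begs the question.
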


Some relevant examples appear in Figure \ref{isomch3f1}. In the top row of the figure we see $C_{5}$, $W_{5}$, $W_{6}$ and $W_{7}$. Each graph in the bottom row is locally equivalent to the graph above it, and is a degree-minimal representative of its local equivalence class. That is, no graph locally equivalent to $C_{5}$ has any vertex of degree $\leq1$, no graph locally equivalent to $W_{5}$ or $W_{7}$ has any vertex of degree $\leq2$, and no graph locally equivalent to $W_{6}$ has either any vertex of degree $\leq1$ or more than three vertices of degree 2. (These assertions may all be verified by inspecting isotropic matroids, and using Corollary 7 of~\cite{BT1}.) Note that $C_{5}$ and $W_{5}$ witness the failure of Corollary \ref{lowdeg} for $n=5$ and $n=6$. Corollary \ref{lowdeg} also fails for $n\leq3$, of course, but it does hold for $n=4$.

\begin{figure}[bt]%
\centering
\includegraphics[scale=0.7]{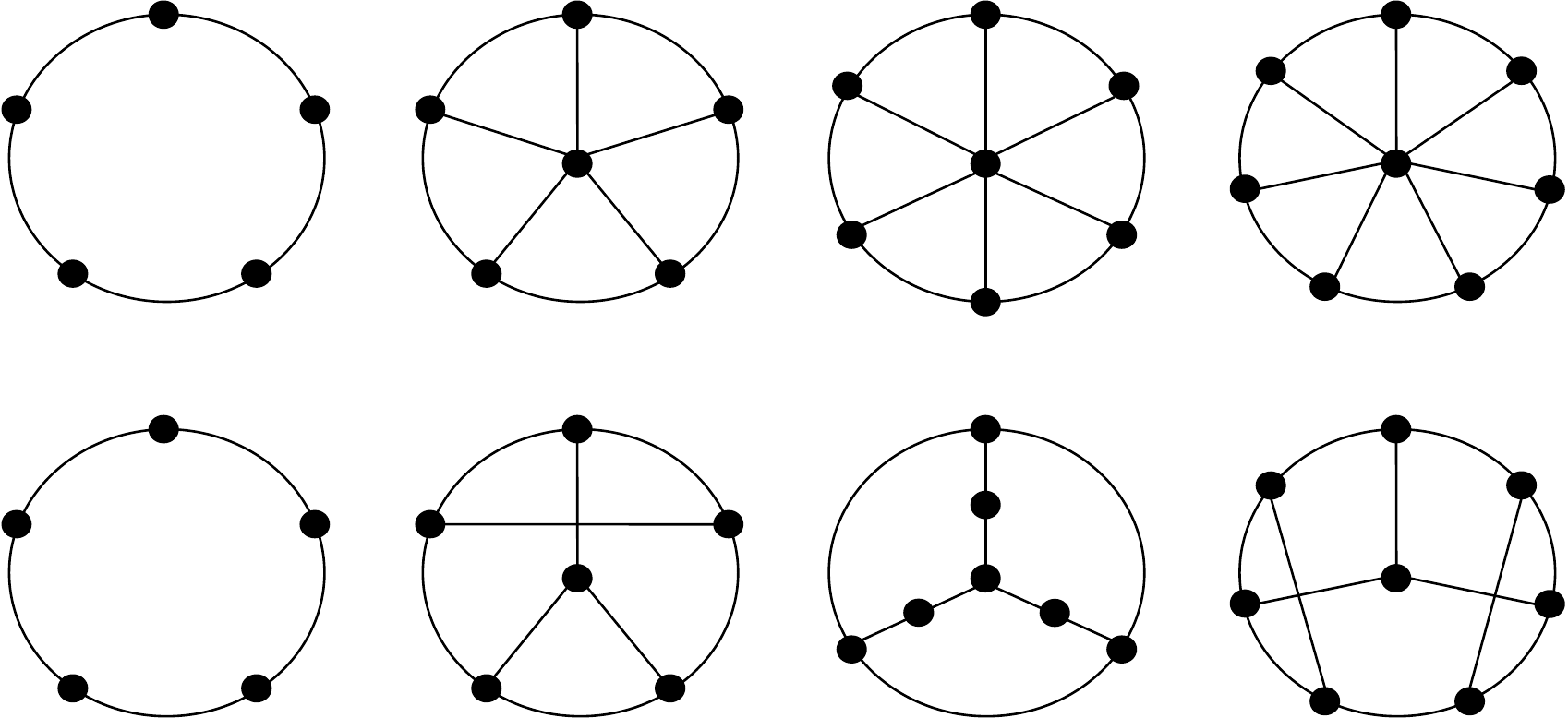}%
\caption{The graphs $C_{5}$, $W_{5}$, $W_{6}$ and $W_{7}$ above degree-minimal locally equivalent graphs.}%
\label{isomch3f1}%
\end{figure}

The fifth significant difference between Theorems \ref{cconnect} and \ref{vconnect} is that while the only local equivalence class uniquely determined by the (cyclic) connectivity of the corresponding isotropic matroid is the equivalence class of the empty graph, two nontrivial local equivalence classes are uniquely determined by the vertical connectivity.

\begin{corollary}
\label{unique} Let $G$ be a graph. Then $\kappa(M[IAS(G)])=2$ if and only if
$G$ is locally equivalent to $K_{2}$, and $\kappa(M[IAS(G)])=6$ if and only if $G$ is locally equivalent to $W_{5}$.
\end{corollary}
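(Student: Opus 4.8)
The plan is to derive Corollary~\ref{unique} directly from Theorems \ref{vconnect}, \ref{degree}, and \ref{Bsep}, together with the enumeration facts recorded in the excerpt about prime graphs on few vertices. I will treat the two equivalences separately, and in each case argue that the stated value of $\kappa(M[IAS(G)])$ pins down the local equivalence class.

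First consider $\kappa(M[IAS(G)])=2$. If $G$ is locally equivalent to $K_{2}$, then since local equivalence preserves the isomorphism type of the isotropic matroid, $\kappa(M[IAS(G)])=\kappa(M[IAS(K_{2})])$, and a short direct computation (or an appeal to case~1 of Theorem~\ref{vconnect} with $n=2$) gives the value $2$. Conversely, suppose $\kappa(M[IAS(G)])=2$. Going through the five cases of Theorem~\ref{vconnect}: case~2 gives value $1$; case~3 gives value $3$; cases~4 and~5 give values that are either $\geq5$ or equal to $n\geq5$; so we must be in case~1, i.e.\ $G$ is connected with $n\leq3$ and $\kappa(M[IAS(G)])=n=2$. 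Thus $G$ is a connected graph on two vertices, hence $G=K_{2}$ up to loops; and since the present paper pays no attention to loops (and, more to the point, adding or deleting loops at the two vertices keeps the graph within one local equivalence class — looping/unlooping a vertex is the pivot-type local complementation operation, or can be checked directly on $IAS$), $G$ is locally equivalent to $K_{2}$.

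Now consider $\kappa(M[IAS(G)])=6$. If $G$ is locally equivalent to $W_{5}$, then $\kappa(M[IAS(G)])=\kappa(M[IAS(W_{5})])$; since $W_{5}$ has $n=6$ vertices and is prime, and is not locally equivalent to $C_{5}$ (which has only $5$ vertices), case~5 of Theorem~\ref{vconnect} forces $\kappa(M[IAS(W_{5})])\neq n=6$ unless\,\dots\ — wait, I need the value, not just $\neq$. Here I instead invoke Theorem~\ref{degree}: no graph locally equivalent to $W_{5}$ has a vertex of degree $<\frac{n-1}{2}=\frac{5}{2}$, i.e.\ of degree $\leq2$ (this is one of the assertions about Figure~\ref{isomch3f1}, verifiable from the isotropic matroid via Corollary~7 of~\cite{BT1}), so by Theorem~\ref{degree}, $\kappa(M[IAS(W_{5})])\not<6$, hence $\kappa(M[IAS(W_{5})])=6$ by case~5 of Theorem~\ref{vconnect} (the wheel $W_{5}$ being prime and, having $6$ vertices, not locally equivalent to $C_{5}$, so case~5 applies and yields equality $\kappa=n=6$). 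Conversely, suppose $\kappa(M[IAS(G)])=6$. The value $6$ is even, so by case~4 of Theorem~\ref{vconnect} we cannot be in the situation "$n\geq5$, $G$ prime, $\kappa<n$'' (that would force $\kappa$ odd); and case~2 gives $1$, case~3 gives $3$, case~1 gives $n\leq3$. The only remaining possibility is case~5: $n\geq5$ and $\kappa(M[IAS(G)])=n=6$, so $n=6$ and $G$ is locally equivalent to $C_{5}$ or $W_{5}$. Since $C_{5}$ has only $5$ vertices while $G$ has $6$, and local equivalence preserves the number of vertices, $G$ cannot be locally equivalent to $C_{5}$; hence $G$ is locally equivalent to $W_{5}$.

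The only genuine subtlety is the forward direction of the second equivalence — verifying that $\kappa(M[IAS(W_{5})])$ really equals $6$ rather than merely being at most $6$. I expect this to be the main (though still modest) obstacle: one must confirm that $W_{5}$ falls into case~5 of Theorem~\ref{vconnect} with the maximal value, which reduces to checking that no graph locally equivalent to $W_{5}$ has a vertex of degree $\leq2$ and then applying Theorem~\ref{degree}. Everything else is bookkeeping over the finitely many cases of Theorem~\ref{vconnect}, using only the parity of the target value ($2$ and $6$ are even, which eliminates cases~3 and~4) and the vertex-count obstruction that separates $C_{5}$ from $W_{5}$ in case~5. I would also remark that these two classes are exactly the two nontrivial classes promised in the fifth ``significant difference'' discussed above, since $K_{2}$ and $W_{5}$ are the only local equivalence classes whose isotropic matroid has even vertical connectivity together with the class of the empty graph having $\kappa=0$ — though for the proof of Corollary~\ref{unique} itself only the two computations above are needed.
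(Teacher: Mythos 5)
Your derivation is correct and matches the paper's intended argument: Corollary~\ref{unique} is meant to follow exactly as you argue, from the case analysis of Theorem~\ref{vconnect} (parity eliminates cases 3 and 4, the value forces case 1 with $n=2$ or case 5 with $n=6$, and vertex count rules out $C_5$), from Theorem~\ref{degree} combined with the recorded fact that no graph locally equivalent to $W_5$ has a vertex of degree $\leq 2$ (giving $\kappa(M[IAS(W_5)])=6$, since $\kappa\leq r(M)=n$ by definition), and from the invariance of the isotropic matroid under local equivalence. One cosmetic point: in the $\kappa=2$ direction you need not invoke any pivot-type operation to handle loops, since loop complementation is itself one of the generating operations in Definition~\ref{local}, so a connected two-vertex graph with loops is locally equivalent to $K_2$ directly by definition.
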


Most of the balance of the paper is devoted to proving the results already mentioned. In Section~\ref{sequent} we observe that there is a connection between splits of $G$ and the analysis of 3-separations of matroids due to Oxley, Semple and Whittle~\cite{OSW, OSW2}. In Section~\ref{circle} we deduce a characterization of circle graphs from the above results and Bouchet's circle graph obstructions theorem~\cite{Bco}.

\section{Definitions and notation}
\label{defs}

In this section we recall the definitions of local equivalence and matroid connectivity, and also the notation we use for isotropic matroids.

\begin{definition}
Let $G$ be a graph, with a vertex $v$.
\begin{itemize}
\item The \emph{loop complement} $G_{\ell}^{v}$ with respect to $v$ is the graph obtained from $G$ by reversing the loop status of $v$.
\item The \emph{simple local complement} $G_{s}^{v}$ with respect to $v$  is the graph obtained from $G$ by reversing all adjacencies between distinct elements of $N_{G}(v)$.
\item The \emph{non-simple local complement} $G_{ns}^{v}$ with respect to $v$ is the graph obtained from $G_{s}^{v}$ by performing loop complementations at all vertices in $N_{G}(v)$.
\end{itemize}
\end{definition}

\begin{definition}
\label{local}
Graphs $G$ and $H$ are \emph{locally equivalent} if one can be obtained from the other through some sequence of local complementations and loop complementations.
\end{definition}

We should mention that although the definition of local complementation for simple graphs is fairly standard, the looped version is not. For instance, loop complementations are not allowed in some references. Notice also that Definition~\ref{local} requires locally equivalent graphs to have the same vertex set. We sometimes abuse language with regard to this requirement, e.g., we might say ``$G$ is locally equivalent to $W_5$'' rather than ``$G$ is isomorphic to a graph locally equivalent to $W_5$.''

The definitions of the three types of matroid connectivity are rather complicated.

\begin{definition}
Let $M$ be a matroid with rank function $r=r_{M}$ and ground set $W$. Then $M$ has a \emph{connectivity function} $\lambda=\lambda_{M}:2^{W}\rightarrow \mathbb{N}\cup\{ 0 \}$, where for $S\subseteq W$, $\lambda(S)=r(S)+r(W-S)-r(M)$.
\end{definition}

\begin{definition}
\label{seps}
Let $M$ be a matroid with ground set $W$, and suppose $k$ is a positive integer. Then a subset $S\subseteq W$ is:

\begin{itemize}

\item a \emph{cyclic} $k$\emph{-separation} of $M$ if $\lambda(S)<k$ and both
$S$ and $W-S$ are dependent in $M$;

\item an \emph{ordinary} $k$\emph{-separation} of $M$ if $\lambda(S)<k$ and $\left\vert S\right\vert ,\left\vert W-S\right\vert \geq k$;

\item a \emph{vertical }$k$\emph{-separation} of $M$ if $\lambda(S)<k$ and
$r(S),r(W-S)\geq k$.
\end{itemize}
\end{definition}

An ordinary $k$-separation is simply called a $k$-separation in the literature; we use the adjective \emph{ordinary} to avoid confusion with the other two types of $k$-separations. This convention allows us to refer to a ``$k$-separation'' as any of the three types of $k$-separations. 

We mention four details regarding Definition~\ref{seps}. (\textit{i}) In all three cases, if $S$ is a $k$-separation then $k\geq \lambda(S)+1$ and $S$ is a $(\lambda(S)+1)$-separation. (\textit{ii}) If $S$ is a cyclic $k$-separation for any $k$, then $\lambda(S)<\left\vert S\right\vert + \left\vert W-S\right\vert-1-r(M)=\left\vert W\right\vert - 1-r(M)$, so $S$ is a cyclic $(\left\vert W\right\vert-r(M)-1)$-separation. (\textit{iii}) If $S$ is a vertical $k$-separation then $S$ is also an ordinary $k$-separation. (\textit{iv}) If $S$ is a vertical $k$-separation then $r(M)\geq r(S), r(W-S) \geq k > \lambda(S)$. The definition of $\lambda(S)$ then implies that $r(M)> r(S)$, so $r(M)>k$.

\begin{definition}
Let $M$ be a matroid with rank function $r$ and ground set $W$.

\begin{itemize}
\item We define $\tau(M)=\min(\{ k \mid M \text{ has an ordinary $k$-separation}\})$, where, by convention, $\min(\emptyset) = \infty$.\\ If $\tau(M)>1$ then $M$ is $j$-\emph{connected} for every $j \in \{2,\ldots,\tau(M)\}$.

\item We define $\kappa^*(M)=\min(\{ k \mid M \text{ has a cyclic $k$-separation}\} \cup \{\left\vert W\right\vert-r(M)\})$.\\ If $\kappa^{\ast}(M)>1$ then $M$ is \emph{cyclically} $j$-\emph{connected} for every $j \in \{2,\ldots,\kappa^{\ast}(M)\}$.

\item We define $\kappa(M)=\min(\{ k \mid M \text{ has a vertical $k$-separation}\} \cup \{r(M)\})$.\\ If $\kappa(M)>1$ then $M$ is \emph{vertically} $j$-\emph{connected} for every $j \in \{2,\ldots,\kappa(M)\}$.
\end{itemize}
\end{definition}

We refer to Oxley \cite{O} for a thorough introduction to the properties of the three types of matroid connectivity, but we take a moment to mention four more details. (\textit{v}) We follow Oxley's convention: if $M$ has no cyclic $k$-separation for any $k$ then $\kappa^{\ast}(M)=\left\vert W\right\vert-r(M)$, and if $M$ has no vertical $k$-separation for any $k$ then $\kappa(M)=r(M)$. Some other authors follow a different convention, according to which all three types of connectivity may equal $\infty$. (\textit{vi}) If $M$ is not 2-connected then $\tau(M)=1$; a component of $M$ is a 1-separation. In contrast, if $M$ is not cyclically or vertically 2-connected, then $\kappa^{\ast}(M)$ or $\kappa(M)$ may be $0$ or $1$. For instance, if $r(M)$ is $0$ or $1$ then $M$ has no vertical $k$-separation for any $k\geq1$, and $\kappa(M)=r(M)$. (\textit{vii}) The empty matroid has $\tau(\emptyset)=\infty$ but $\kappa^{\ast}(\emptyset)=\kappa(\emptyset)=0$. (\textit{viii}) The three notions of 2-connectedness differ in many cases. For instance, if $M$ is a vertically 2-connected matroid of rank $>1$ and $\ell$ is a loop then the direct sum $M \oplus \{\ell\}$ is not 2-connected; but it is vertically 2-connected.

We now recall the notation and terminology we use for isotropic matroids. If $G$ is a graph then the ground set of $M = M[IAS(G)]$ is denoted $W(G)$ or $W(M)$; it has $3n$ elements, one for each column of the matrix $IAS(G)$. If $v\in V(G)$, then the elements of $W(G)$ corresponding to the $v$ columns of $I$, $A(G)$ and $A(G)+I$ are denoted $\phi_{G}(v)$, $\chi_{G}(v)$ and $\psi_{G}(v)$ respectively. The set $\tau_{G}(v)=\{\phi_{G}(v)$, $\chi_{G}(v)$, $\psi_{G}(v)\}$ is the \emph{vertex triple} of $v$; observe that vertex triples are always dependent. (We rely on the subscript and the arguments to distinguish a vertex triple $\tau_{G}(v)$ from the connectivity $\tau(M)$ of a matroid.) If $X\subseteq V(G)$ then $\tau_{G}(X)=\cup_{x\in X}\tau_{G}(x)$. If a subset $S\subseteq W(G)$ does not intersect any vertex triple more than once, then $S$ is a \emph{subtransversal} of $W(G)$; if $S$ intersects every vertex triple precisely once then $S$ is a \emph{transversal}. In particular, a \emph{transverse matroid} of $G$ is a submatroid of $M$ obtained by restricting to a transversal, and a \emph{transverse circuit} of $G$ is a circuit of a transverse matroid. For each $v\in V(G)$ there is a special transverse circuit $\zeta_{G}(v)$, the \emph{neighborhood circuit} of $v$. It includes $\phi_{G}(w)$ for every $w\in N_{G}(v)$, and also includes either $\psi_{G}(v)$ (if $v$ is looped) or $\chi_{G}(v)$ (if $v$ is unlooped).
%\NOTE{For an isotropic matroid $M$, we say that $\Omega$ is a \emph{triple-partition} of $M$ if there is a graph $G$ such that $M = M[IAS(G)]$ and $\Omega=\{\tau_{G}(v) \mid v \in V(G)\}$.}

\section{Cyclic and ordinary connectivity of isotropic matroids}
\label{csep}

In the preceding section we mentioned that the properties ``2-connected,'' ``cyclically 2-connected'' and ``vertically 2-connected'' are different in general. For nonempty isotropic matroids, however, they are equivalent.

\begin{proposition}
\label{connected} If $n>0$ then an isotropic matroid $M=M[IAS(G)]$ is 2-connected, cyclically 2-connected or vertically 2-connected if and only if $G$ is connected and $n>1$. 
\end{proposition}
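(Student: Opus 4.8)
The plan is to reduce all three claims to the single statement that the isotropic matroid $M=M[IAS(G)]$ is a connected matroid exactly when $G$ is connected and $n>1$, which is recorded in the introduction and is due to~\cite{Tnewnew}; the rest is bookkeeping with the definitions of Section~\ref{defs}. Write $W=W(G)$ and note that the $\phi$-columns of $IAS(G)$ form an identity block, so $r(M)=n$ and $|W|=3n$. The key observation is that a cyclic $1$-separation $S$ has $S$ and $W-S$ both dependent, hence both nonempty, and a vertical $1$-separation $S$ has $r(S),r(W-S)\geq1$, hence $S,W-S\neq\emptyset$; in either case $S$ is an ordinary $1$-separation. So if $M$ has no ordinary $1$-separation --- i.e.\ $M$ is $2$-connected --- then $M$ has no cyclic or vertical $1$-separation either. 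Since $|W|-r(M)=2n\geq2$ for $n\geq1$ and $r(M)=n\geq2$ for $n\geq2$, the definitions of $\kappa^{\ast}$ and $\kappa$ then give that, when $n\geq2$, a $2$-connected isotropic matroid is automatically cyclically and vertically $2$-connected.

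This handles the ``if'' direction at once: if $G$ is connected and $n>1$, then $M$ is connected by~\cite{Tnewnew}, hence $2$-connected, and hence also cyclically and vertically $2$-connected. For a self-contained proof of the matroid connectedness that is needed here, I would work with the circuit graph on $W$. Since $G$ is connected with $n\geq2$, no vertex is isolated, so the three elements $\phi_{G}(v),\chi_{G}(v),\psi_{G}(v)$ of each vertex triple are pairwise distinct; as $\psi_{G}(v)=\phi_{G}(v)+\chi_{G}(v)$ over $GF(2)$, it follows that each $\tau_{G}(v)$ is a $3$-element circuit of $M$. Moreover, for every $v$ the neighborhood circuit $\zeta_{G}(v)$ contains a member of $\tau_{G}(v)$ together with $\phi_{G}(w)$ for each neighbor $w$ of $v$, so adjacent vertices have their triples in a common component of the circuit graph. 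Since $W=\bigcup_{v}\tau_{G}(v)$ and $G$ is connected, the circuit graph is connected and $M$ is connected.

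For the converse I would argue the contrapositive, so suppose $G$ is disconnected or $n=1$. If $n=1$ then, whether or not $v$ is looped, $M$ consists of a loop $\ell$ together with a parallel pair; then $\{\ell\}$ is an ordinary $1$-separation (indeed a cyclic one, as its complement is a $2$-element circuit), so $M$ is neither $2$-connected nor cyclically $2$-connected, and $r(M)=1<2$ forces $\kappa(M)=1$, so $M$ is not vertically $2$-connected. If instead $G=G_{1}\sqcup G_{2}$ with $G_{1},G_{2}$ nonempty, then $A(G)$ is block-diagonal and $M=M[IAS(G_{1})]\oplus M[IAS(G_{2})]$, so $S=W(G_{1})$ has $\lambda(S)=0$; here $r(S)=|V(G_{1})|\geq1$, $r(W-S)=|V(G_{2})|\geq1$, and $|W(G_{i})|=3|V(G_{i})|>|V(G_{i})|$, so $S$ and $W-S$ are both dependent and $S$ simultaneously witnesses the failure of all three $2$-connectivity properties. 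The only substantive ingredient in the whole argument is the connectedness criterion of~\cite{Tnewnew} (or the circuit-graph reproof above); the rest is simply unwinding Definition~\ref{seps}, so I do not anticipate any real obstacle.
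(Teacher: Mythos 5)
Your proposal is correct and follows essentially the same route as the paper: explicit $1$-separations (the matroid loop for $n=1$, and the ground-set part corresponding to a disconnected piece of $G$) for the converse, and vertex triples plus neighborhood circuits to get matroid connectedness when $G$ is connected with $n>1$, followed by the same bookkeeping with $r(M)=n>1$ and $\left\vert W\right\vert-r(M)=2n>1$ to upgrade to cyclic and vertical $2$-connectedness. The only cosmetic difference is that you phrase the forward direction as citing~\cite{Tnewnew} with the circuit-graph argument as a backup, whereas the paper proves it directly, but the argument is the same.
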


\begin{proof}
We first show the only-if direction. If $n=1$ then $M$ has three elements, a loop $\ell$ and a pair of parallel non-loops. The set $S=\{\ell\}$ is an ordinary 1-separation and a cyclic 1-separation, so $\tau(M)=\kappa^{\ast}(M)=1$. Also, $M$ has no vertical $k$-separation for any $k$, so $\kappa(M)=r(M)=1$.

If $G$ is not connected, let $C$ be a connected component of $G$ and let $S=\tau_{G}(V(C))$. Then no row of $IAS(G)$ has a nonzero entry in both a column corresponding to an element of $S$ and a column corresponding to an element of $W(G)-S$, so the rank of $IAS(G)$ is the sum of the ranks of the submatrices corresponding to $S$ and $W(G)-S$. That is, $\lambda(S)=0$. As $S$ and $W(G)-S$ are both dependent sets of cardinality $\geq3$ and rank $\geq1$, $S$ is an ordinary 1-separation, a cyclic 1-separation and a vertical 1-separation; consequently $\tau(M)=\kappa^{\ast}(M)=\kappa(M)=1$.

We conclude by contrapositive that if $M$ is 2-connected, cyclically 2-connected or vertically 2-connected then $G$ is connected and $n>1$.

For the converse, observe that if $e=vw$ is a nonloop edge of $G$ then (a) the vertex triples of $v$ and $w$ are circuits of $M$ and (b) the neighborhood circuit $\zeta_{G}(v)$ intersects both $\tau_{G}(v)$ and $\tau_{G}(w)$. Consequently a single component of $M$ contains $\tau_{G}(v)\cup\tau_{G}(w)$. This observation is true for every edge of $G$,
so if $G$ is a connected graph with $n>1$ then $M$ is a 2-connected matroid. Notice that a cyclic 1-separation is an ordinary 1-separation and a vertical 1-separation is also an ordinary 1-separation; as $M$ is 2-connected, we conclude that it
cannot have any kind of 1-separation. Finally, $n>1$ implies that $r(M)=n>1$ and $\left\vert W(G)\right\vert -r(M) = 3n-n = 2n > 1$, so $\kappa(M),\kappa^{\ast}(M)>1$ whether or not there is a cyclic or vertical $k$-separation for some $k>1$.
\end{proof}

We proceed to prove Theorem \ref{cconnect}. Suppose $n>1$ and $G$ is connected. If$\ v$ is any vertex of $G$
then the vertex triple $\tau_{G}(v)$ is a dependent set of $M[IAS(G)]$, whose complement is also dependent. As $r(\tau_{G}(v))\leq 2$, $\tau_{G}(v)$ is an ordinary 3-separation and a cyclic 3-separation. Consequently $\tau(M[IAS(G)])\leq3$
and $\kappa^{\ast}(M[IAS(G)])\leq3$.

Theorem \ref{cconnect} now follows from Proposition~\ref{connected} and this:

\begin{proposition}
Suppose $G$ is connected and $n>1$. Then the following are equivalent.

\begin{enumerate}
\item $M[IAS(G)]$ is not 3-connected.

\item $M[IAS(G)]$ is not cyclically 3-connected.

\item $M[IAS(G)]$ has a circuit of size 2.

\item $G$ has a pendant vertex, or a pair of twin vertices.
\end{enumerate}
\end{proposition}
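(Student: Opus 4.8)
The plan is to prove the cyclic chain of implications $(1)\Rightarrow(4)\Rightarrow(3)\Rightarrow(2)\Rightarrow(1)$ (or some convenient reordering), using the matrix description of $IAS(G)$ and the structure of the neighborhood circuits $\zeta_G(v)$. The two ``easy'' implications are $(3)\Rightarrow(2)$ and $(2)\Rightarrow(1)$: a size-$2$ circuit $S$ has $r(S)=1$, its complement has cardinality $3n-2\geq 4$ and (since $G$ is connected with $n>1$) rank $n$, hence $S$ is both a cyclic $2$-separation and an ordinary $2$-separation; the existence of a cyclic (resp.\ ordinary) $2$-separation is exactly the failure of cyclic (resp.\ ordinary) $3$-connectedness, given that we already established $\tau,\kappa^\ast\leq 3$ and, by Proposition~\ref{connected}, $\tau,\kappa^\ast\geq 2$. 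So those steps are routine bookkeeping.

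The implication $(4)\Rightarrow(3)$ is a direct computation with columns of $IAS(G)=(I\mid A(G)\mid A(G)+I)$. If $v$ is pendant on $w$, then the $v$-column of $A(G)$ is the standard basis vector $e_w$; I would check that two columns among $\{\phi_G(v),\chi_G(v),\psi_G(v),\phi_G(w),\chi_G(w),\psi_G(w)\}$ coincide or differ by a column already accounted for, producing a size-$2$ dependent set — concretely, after a local complementation at $w$ turning the pendant edge into an isolated looped/unlooped configuration one can read off parallel elements, or one argues directly that $\psi_G(v)=\phi_G(v)+\chi_G(v)$ combined with $\chi_G(v)=\phi_G(w)$ gives a circuit of size $2$ among the listed six elements after the right substitution. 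If $v$ and $w$ are twins, then the $v$- and $w$-columns of $A(G)$ are equal outside coordinates $v,w$, so $\chi_G(v)+\chi_G(w)$ lies in the span of $\{\phi_G(v),\phi_G(w)\}$; matching this against $\psi$-columns (which differ from $\chi$-columns exactly in the diagonal) yields a two-element dependent set. I expect a short case split on loop status of $v$ and $w$ (and on whether the twins are adjacent) but nothing deep.

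The main obstacle is $(1)\Rightarrow(4)$: assuming $M[IAS(G)]$ has an ordinary $2$-separation $S$ (so $\lambda(S)\leq 1$, $|S|,|W(G)-S|\geq 2$), I must extract a pendant vertex or a pair of twins. The strategy is to analyze which vertex triples $\tau_G(x)$ are ``split'' by $S$, i.e.\ meet both $S$ and $W(G)-S$. Since $G$ is connected, Proposition~\ref{connected} shows $M$ is $2$-connected, so $\lambda(S)=1$ exactly; a connectivity-$1$ set in a connected matroid corresponds to a near-decomposition, and $\lambda(S)=1$ forces the columns indexed by $S$ and by $W(G)-S$ to span subspaces meeting in a line. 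I would argue that $S$ cannot be a union of whole vertex triples (else $\lambda(S)=0$ by the connectedness-detection argument already used, contradicting $2$-connectedness), so some triple is split; then small rank-counting ($r(\tau_G(x))\leq 2$, and a split triple contributes at most so much to each side) bounds the number of split triples to be small, and one shows essentially one or two vertices are ``responsible.'' Pinning down that the local structure around those vertices is forced to be pendant-or-twin is the crux: I would translate the rank condition $r(S)+r(W(G)-S)=n+1$ into a statement that, after restricting to the relevant rows/columns, $A(G)$ has a row or a pair of rows of a very constrained form — a single nonzero off-diagonal entry (pendant) or two proportional rows (twins). I anticipate this is where the bulk of the work lies, likely handled by choosing $S$ to be a minimal (by cardinality) ordinary $2$-separation and exploiting minimality to eliminate all but the pendant/twin configurations; the parallel structure among $\phi,\chi,\psi$ columns (any two of which sum to the third within a triple) should make the minimal separator extremely rigid.
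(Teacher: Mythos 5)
The central implication of the proposition — that an ordinary or cyclic 2-separation forces a pendant vertex or a pair of twins — is exactly the part you have not proved: your text for $(1)\Rightarrow(4)$ is a plan with the crux explicitly deferred (``I anticipate this is where the bulk of the work lies''). The paper's argument rests on a concrete observation your sketch does not contain. For $S\subseteq W(G)$ write $S_\phi,S_\chi,S_\psi$ for the vertex sets picked out by the three column types; then the columns of $S$ form a block matrix with an $S_\phi\times S_\phi$ identity block, so $r(S)\geq\left\vert S_\phi\right\vert$, and equality forces every vertex of $S_\chi\cup S_\psi$ to have all its neighbors inside $S_\phi$. Since $\left\vert S_\phi\right\vert+\left\vert (W(G)-S)_\phi\right\vert=n$ while a 2-separation of the 2-connected matroid $M$ has $r(S)+r(W(G)-S)=n+1$, one of the two sides attains the bound and hence is spanned by its $\phi$-elements; that spanning condition is what rigidifies everything. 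A short connectivity argument then yields the dichotomy: either $S_\phi=V(G)$, so the other side has rank 1 and any two of its (non-$\phi$) elements are identical columns, forcing twins; or $S_\phi$ collapses to a single vertex $v$ and the extra element on the other side is $\chi_G(w)$ or $\psi_G(w)$ spanned by $\phi_G(v)$, forcing $w$ pendant on $v$. Your proposed route (take a cardinality-minimal 2-separation and bound the number of ``split'' vertex triples) does not obviously lead there: in the twin case essentially every triple may be split, so ``few split triples'' is not the mechanism, and minimality is never exploited in a way you specify. As written, the key implication is a gap, not a proof.

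Two smaller points. First, your cycle needs $(2)\Rightarrow(1)$, but what you actually derive from a size-2 circuit is $(3)\Rightarrow(1)$ and $(3)\Rightarrow(2)$; to close the equivalence you must either run the hard implication starting from a cyclic 2-separation as well (the paper proves ``(1) or (2) implies (4)'' uniformly) or note that $M$, being 2-connected with more than one element by Proposition~\ref{connected}, is loopless, so both sides of a cyclic 2-separation are dependent sets of size $\geq2$ and it is automatically an ordinary 2-separation. Second, your parenthetical claim that a 2-separation which is a union of vertex triples would have $\lambda(S)=0$ needs the parity fact $\lambda(\tau_G(X))=2c_G(X)$ (or some substitute), which you do not establish; and the assertion that the complement of a 2-circuit has rank $n$ is unnecessary — its dependence already follows from $\left\vert W(G)-S\right\vert=3n-2>n\geq r(W(G)-S)$. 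The $(4)\Rightarrow(3)$ computations are right in spirit (e.g.\ $\chi_G(v)=\phi_G(w)$ for an unlooped pendant $v$, and matching $\chi$- or $\psi$-columns for twins gives identical columns), though these too are left as an unexecuted case split.
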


\begin{proof}
We begin with a simple observation. Given a subset $S\subseteq W(G)$, let
$S_{\phi}, S_{\chi}, S_{\psi}\subseteq V(G)$ be the subsets with 
$ S=\phi_{G}(S_{\phi})\cup \chi_{G}(S_{\chi}) \cup \psi_{G}(S_{\psi})$.
Then the rank of $S$ in $M=M[IAS(G)]$ is the $GF(2)$-rank of a matrix%
\[
\bordermatrix{
& \phi_{G}(S_{\phi}) & \chi_{G}(S_{\chi}) \cup \psi_{G}(S_{\psi}) \cr
S_{\phi} & I & *  \cr
V(G)- S_{\phi} & 0 & B \cr
}\text{,}%
\]
where $I$ is the $S_{\phi} \times S_{\phi}$ identity matrix and the off-diagonal entries of $B$ record adjacencies between vertices in $S_{\chi} \cup S_{\psi}$ and vertices outside $S_{\phi}$. Observe that $r(S)\geq\left\vert S_{\phi} \right\vert $, and if $r(S)=\left\vert S_{\phi} \right\vert $ then no element of $S_{\chi} \cup S_{\psi}$ has a neighbor in $V(G)-S_{\phi}$.

If condition 4 holds then $IAS(G)$ has a pair of identical columns. This implies that $M$ has a pair of parallel elements, so condition 3 holds. Another simple argument shows that condition 3 implies conditions 1 and 2: If
$S$ is a circuit of size 2 in $M$ then as the sum of the columns of $IAS(G)$ is $0$, $W(G)-S$ is a dependent set. As $\left\vert S\right\vert $, $\left\vert W(G)-S\right\vert \geq2$ it follows that $S$ is an ordinary 2-separation and a cyclic 2-separation.

The proof is completed by showing that either of conditions 1, 2 implies condition 4.

Suppose condition 1 or condition 2 holds, and let $S$ be an ordinary or cyclic 2-separation of $M$. Proposition \ref{connected} tells us that $S$ cannot be a 1-separation of either type, so it must be that $r(S)+r(W(G)-S)=n+1$. The observation of the first paragraph tells us that after interchanging $S$ and $W(G)-S$ if necessary, we may presume that $r(S)=\left\vert S_{\phi} \right\vert $ and $r(W(G)-S)=\left\vert (W(G) - S)_{\phi} \right\vert +1$. Consequently $S$ is spanned by $\phi_{G}(S_{\phi})$, and $W(G)-S$ is spanned by $\phi_{G}(V(G)-S_{\phi})$ together with one additional element.

If $S_{\phi}=V(G)$ then $r(W(G)-S)=1$. Choose any two elements of $W(G)-S$; they must be parallel, so the corresponding columns of $IAS(G)$ must be the same. Considering the definition of $IAS(G)$, it is easy to see that if two non-$\phi$ columns are the same then the corresponding vertices of $G$ are twins.

If $S_{\phi}$ is a proper subset of $V(G)$ then as $G$ is connected, there is an edge $vw$ with $v\in S_{\phi}$ and $w\notin S_{\phi}$. For each such edge, the $\chi_{G}(v)$ and $\psi_{G}(v)$ columns of $IAS(G)$ have nonzero coordinates with respect to $w$, so these elements are not contained in the span of $\phi_{G}(S_{\phi})$. Consequently $\chi_{G}(v), \psi_{G}(v) \in W(G)-S$. It follows that the span of $W(G)-S$ also includes $\phi_{G}(v)$; as noted at the end of the paragraph before last, this implies that $W(G)-S$ is contained in the span of $\phi_{G}(\{v\} \cup (V(G)-S_{\phi}))$. It follows that no column of $IAS(G)$ that corresponds to an element of $W(G)-S$ has a nonzero $x$ coordinate for any vertex $x\notin\{v\}\cup (V(G)-S_{\phi})$. In particular, the $\chi_{G}(v)$ and $\psi_{G}(v)$ columns of $IAS(G)$ do not have a nonzero $x$ coordinate for any vertex $x\notin\{v\}\cup (V(G)-S_{\phi})$, so the neighbors of $v$ all lie in $V(G)-S_{\phi}$. 

If $x\notin\{v\}\cup (V(G)-S_{\phi})$ then either the $\chi_{G}(x)$ or the $\psi_{G}(x)$ column of $IAS(G)$ has a nonzero $x$ coordinate, so $\chi_{G}(x)$ or $\psi_{G}(x)$ must be an element of $S$. As $S$ is spanned by $\phi_{G}(S_{\phi})$, it follows that the neighbors of $x$ all lie in $S_{\phi}$. As the neighbors of $v$ all lie in $V(G)-S_{\phi}$, it follows that no edge of $G$ connects any $x\notin\{v\}\cup (V(G)-S_{\phi})$ to any $y\in \{v\}\cup (V(G)-S_{\phi})$. Now, $G$ is connected, so we conclude that there is no vertex $x\notin\{v\}\cup (V(G)-S_{\phi})$. That is, $\{v\}=S_{\phi}$. As $S$ is an ordinary or cyclic 2-separation, $S$ must contain some element other than $\phi_{G}(v)$; this other element must be $\chi_{G}(w)$ or $\psi_{G}(w)$ for some $w\neq v$. The fact that $S$ is spanned by $\phi_{G}(S_{\phi})=\{\phi_{G}(v) \}$ implies that $w$ has no neighbor other than $v$.

We conclude that if condition 1 holds or condition 2 holds, then condition 4 holds.
\end{proof}

\section{Isotropic separations and vertical separations}
\label{vsep}

In this section we discuss vertical separations of isotropic matroids, and prove Theorem~\ref{Bsep}.

For an $n$-vertex graph $G$ with adjacency matrix $A=A(G)$, Bouchet~\cite{Bconn} defines the \emph{cut-rank} function $c_{G}:2^{V(G)}\rightarrow \mathbb{N} \cup \{0\}$, where for all $X\subseteq V(G)$, $c_{G}(X)=r(A[V(G)-X,X])$, the $GF(2)$-rank of the submatrix of $A$ that includes the columns from $X$ and the rows from $V(G)-X$. (The rank of the empty matrix is taken to be $0$.) 

Notice that
\[
c_{G}(X)=c_{G}(V(G)-X)\leq\min\{\left\vert X\right\vert ,\left\vert
V(G)-X\right\vert \}
\leq
\left\lfloor \frac{n}{2}\right\rfloor \text{ }\forall
X\subseteq V(G).
\]
The equality $c_{G}(X)=0$ holds if $X$ or $V(G)-X$ is a union of vertex-sets of connected components of $G$. The equality $c_{G}(X)=1$ holds if $X$ or $V(G)-X$ consists of a single non-isolated vertex, and also if $G$ has a split $(V_{1},W_{1};V_{2},W_{2})$ with $X=V_{1}$ and $W_1 \neq \emptyset \neq W_2$.

It turns out that we can define $c_{G}$ using the isotropic matroid $M=M[IAS(G)]$. Define $c_M: 2^{V(G)} \to \mathbb{N} \cup \{0\}$, where for all $X\subseteq V(G)$, $c_M(X) = r(\tau_G(X)) - |X|$.

\begin{lemma}
\label{lem:trans_cut_rank} Let $G$ be a graph and $M=M[IAS(G)]$. Then $c_G = c_M$. In other words, for every $X\subseteq V(G)$, $r(\tau_{G}(X))=\left\vert X\right\vert +c_{G}(X)$. Moreover, $\lambda(\tau_{G}(X))=2c_{G}(X)$.
\end{lemma}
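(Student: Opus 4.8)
The plan is to compute $r(\tau_G(X))$ directly from the matrix $IAS(G)$ by Gaussian elimination over $GF(2)$, and then read off $\lambda(\tau_G(X))$ from its definition together with the symmetry of the adjacency matrix.

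Fix $X\subseteq V(G)$ and write $Y=V(G)-X$. First I would write down the submatrix $N$ of $IAS(G)$ whose columns are those indexed by $\tau_G(X)$, grouping the columns into the $\phi$-, $\chi$- and $\psi$-columns (each group indexed by $X$) and grouping the rows into those indexed by $X$ and those indexed by $Y$. Since $I[Y,X]=0$ and $(A(G)+I)[Y,X]=A(G)[Y,X]$, the matrix takes the block form
\[
N=\begin{pmatrix} I_X & A(G)[X,X] & A(G)[X,X]+I_X \\ 0 & A(G)[Y,X] & A(G)[Y,X] \end{pmatrix},
\]
where $I_X$ is the $X\times X$ identity. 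Using the $\phi$-columns (which restrict to $I_X$ in the rows indexed by $X$ and to $0$ in the rows indexed by $Y$) to clear, by column operations, every other entry in the rows indexed by $X$, one obtains a matrix of the same rank whose only nonzero entries outside the $\phi$-block lie in the block $\big(A(G)[Y,X]\ \big|\ A(G)[Y,X]\big)$. Hence $r(\tau_G(X))=r(N)=|X|+r\big(A(G)[Y,X]\big)=|X|+c_G(X)$, which is exactly the assertion $c_M=c_G$. (This is the specialization to $S=\tau_G(X)$ of the matrix computation already used in Section~\ref{csep}, where $S_\phi=S_\chi=S_\psi=X$.)

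For the ``moreover'' part, I would apply $\lambda(S)=r(S)+r(W(G)-S)-r(M)$ with $S=\tau_G(X)$. Since $W(G)-\tau_G(X)=\tau_G(Y)$, the first part gives $r(\tau_G(Y))=|Y|+c_G(Y)$. The symmetry of the adjacency matrix gives $c_G(Y)=r\big(A(G)[X,Y]\big)=r\big(A(G)[Y,X]\big)=c_G(X)$ (as $A(G)[X,Y]$ is the transpose of $A(G)[Y,X]$), and since $IAS(G)$ contains the $n\times n$ identity as a submatrix we have $r(M)=n=|X|+|Y|$. Therefore $\lambda(\tau_G(X))=(|X|+c_G(X))+(|Y|+c_G(X))-n=2c_G(X)$.

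There is no serious obstacle here: the argument is elementary linear algebra over $GF(2)$. The only points needing care are the bookkeeping of which columns of $IAS(G)$ belong to $\tau_G(X)$ and how they restrict to the two row blocks, and the use of the symmetry of $A(G)$ to identify $c_G(V(G)-X)$ with $c_G(X)$. The edge case $n=0$, where $M$ is empty, $X=\emptyset$, and both sides of each equation are $0$, is trivial.
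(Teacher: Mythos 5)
Your proof is correct and follows essentially the same route as the paper: both compute $r(\tau_G(X))$ from the block form of the relevant submatrix of $IAS(G)$ (the paper reads off the rank directly from the block structure, you make the column-elimination explicit), and both then obtain $\lambda(\tau_G(X))=2c_G(X)$ from the definition of $\lambda$ together with $c_G(X)=c_G(V(G)-X)$ and $r(M)=n$. No gaps.
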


\begin{proof}
Let $A = A(G)$. We have that $r(\tau_{G}(X))$ is the $GF(2)$-rank of the matrix%
\[
\bordermatrix{
& \phi_{G}(X) & \chi_{G}(X) & \psi_{G}(X) \cr
X & I & A[X,X] & A[X,X]+I \cr
V(G)-X & 0 & A[V(G)-X,X] & A[V(G)-X,X] \cr
}.
\]
Thus $r(\tau_{G}(X))=r(I)+r(A[V(G)-X,X])=|X|+c_{G}(X)$. It follows that%
\begin{align*}
\lambda(\tau_{G}(X))  &  =r(\tau_{G}(X))+r(\tau_{G}(V(G)-X))-r(M)\\
&  =\left\vert X\right\vert +c_{G}(X)+\left\vert V(G)-X\right\vert
+c_{G}(V(G)-X)-n=2c_{G}(X)\text{.}%
\end{align*}
\end{proof}

Since $c_G = c_M$ and local equivalences of graphs induce isomorphisms of isotropic matroids that preserve vertex triples~\cite[Section 3]{BT1}, we obtain the well-known result of Bouchet~\cite{Bconn} that $c_G = c_{G'}$ when $G$ and $G'$ are locally equivalent.

\begin{proposition}
\label{purechar}Let $X\subseteq V(G)$. Then $c_{G}(X)<\min\{\left\vert X\right\vert ,\left\vert V(G)-X\right\vert \}$ if and only if there is a $k\geq1$ such that $\tau_{G}(X)$ is a vertical $k$-separation of $M=M[IAS(G)]$. If this is the case then the smallest $k$ for which $\tau_{G}(X)$ is a vertical $k$-separation of $M$ is $k=2c_{G}(X)+1$.
\end{proposition}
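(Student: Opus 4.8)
The plan is to translate the condition ``$\tau_G(X)$ is a vertical $k$-separation'' into statements about $c_G(X)$ using Lemma~\ref{lem:trans_cut_rank}. Recall that $S=\tau_G(X)$ is a vertical $k$-separation of $M$ means $\lambda(S)<k$ and $r(S),r(W(G)-S)\geq k$. By Lemma~\ref{lem:trans_cut_rank} we have $\lambda(\tau_G(X))=2c_G(X)$, $r(\tau_G(X))=|X|+c_G(X)$, and (applying the lemma to $V(G)-X$ and using $c_G(X)=c_G(V(G)-X)$) $r(W(G)-\tau_G(X))=r(\tau_G(V(G)-X))=|V(G)-X|+c_G(X)$. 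So $\tau_G(X)$ is a vertical $k$-separation precisely when $2c_G(X)<k$ and $\min\{|X|,|V(G)-X|\}+c_G(X)\geq k$.

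First I would establish the forward direction. Suppose such a $k\geq 1$ exists. Then $2c_G(X)<k\leq \min\{|X|,|V(G)-X|\}+c_G(X)$, and subtracting $c_G(X)$ from both ends gives $c_G(X)<\min\{|X|,|V(G)-X|\}$, as desired. Conversely, suppose $c_G(X)<\min\{|X|,|V(G)-X|\}$. I claim $k=2c_G(X)+1$ works. Indeed $2c_G(X)<2c_G(X)+1=k$ holds trivially, and $\min\{|X|,|V(G)-X|\}+c_G(X)\geq (c_G(X)+1)+c_G(X)=2c_G(X)+1=k$ since $\min\{|X|,|V(G)-X|\}\geq c_G(X)+1$. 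Also $k=2c_G(X)+1\geq 1$. Hence $\tau_G(X)$ is a vertical $k$-separation with $k=2c_G(X)+1$.

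It remains to check that $k=2c_G(X)+1$ is the smallest such value. If $\tau_G(X)$ is a vertical $k'$-separation then $\lambda(\tau_G(X))<k'$, i.e.\ $2c_G(X)<k'$, so $k'\geq 2c_G(X)+1$; combined with the previous paragraph this shows $2c_G(X)+1$ is attained and is minimal. I do not anticipate a serious obstacle here: the entire argument is a bookkeeping exercise converting the three rank inequalities in Definition~\ref{seps} into inequalities on $c_G(X)$, and the only substantive input is Lemma~\ref{lem:trans_cut_rank}, which is already proved. The one point requiring a moment's care is the symmetry $r(W(G)-\tau_G(X))=r(\tau_G(V(G)-X))$, which holds because $W(G)-\tau_G(X)=\tau_G(V(G)-X)$, together with $c_G(X)=c_G(V(G)-X)$; both facts are recorded in the excerpt.
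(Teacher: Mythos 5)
Your proposal is correct and follows essentially the same route as the paper: both arguments simply feed the rank and connectivity formulas of Lemma~\ref{lem:trans_cut_rank} (together with $W(G)-\tau_G(X)=\tau_G(V(G)-X)$ and $c_G(X)=c_G(V(G)-X)$) into the definition of a vertical $k$-separation and do the resulting inequality bookkeeping, with minimality coming from $k>\lambda(\tau_G(X))=2c_G(X)$. No gaps.
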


\begin{proof}
Suppose $c_{G}(X)<\min\{\left\vert X\right\vert ,\left\vert V(G)-X\right\vert \}$. Then $r(\tau_{G}(X))=\left\vert X\right\vert +c_{G}(X)\geq 2c_{G}(X)+1$, 
$r(\tau_{G}(V(G)-X))=\left\vert V(G)-X\right\vert +c_{G}(X)\geq2c_{G}(X)+1$ and $\lambda(\tau_{G}(X))=2c_{G}(X)<2c_{G}(X)+1$, so $\tau_{G}(X)$ is a vertical $(2c_{G}(X)+1)$-separation of $M$.

For the converse, suppose $\tau_{G}(X)$ is a vertical $k$-separation of $M$. Then $k\geq \lambda(\tau_{G}(X))+1=2c_{G}(X)+1$. Also, $k\leq r(\tau_{G}(X))=\left\vert X\right\vert +c_{G}(X)$ and $k\leq r(\tau_{G}(V(G)-X))=\left\vert V(G)-X\right\vert +c_{G}(X)$, so $2c_{G}(X)<\left\vert X\right\vert +c_{G}(X)$ and $2c_{G}(X)<\left\vert V(G)-X\right\vert +c_{G}(X)$. It follows that $c_{G}(X)<\min\{\left\vert X\right\vert ,\left\vert V(G)-X\right\vert \}$.
\end{proof}

\begin{proposition}
\label{puresep} Suppose $G$ is a graph, and $M=M[IAS(G)]$ is such that $\kappa(M)<n$. Then $M$ has a vertical $\kappa(M)$-separation $\tau_{G}(X)$ for some $X \subseteq V(G)$, and $\kappa(M)=2c_{G}(X)+1$.
\end{proposition}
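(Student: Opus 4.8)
The plan is to show that when $\kappa(M)<n$, the vertical $\kappa(M)$-separation can always be chosen to be a union of vertex triples, i.e.\ of the form $\tau_G(X)$; the final equality $\kappa(M)=2c_G(X)+1$ is then immediate from Proposition~\ref{purechar}. So let $S\subseteq W(G)$ be a vertical $k$-separation of $M$ with $k=\kappa(M)<n$, chosen (among all vertical $\kappa(M)$-separations) so that the number of vertex triples $\tau_G(v)$ that $S$ splits---that is, those $v$ for which $S$ intersects $\tau_G(v)$ but does not contain it---is as small as possible. The goal is to argue this minimum is zero. So suppose for contradiction that some vertex triple is split by $S$.

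First I would record the basic arithmetic: since $S$ is a vertical $k$-separation, $r(S),r(W(G)-S)\geq k>\lambda(S)=r(S)+r(W(G)-S)-n$, and by detail (\textit{iv}) of Definition~\ref{seps} we have $n>k\geq\lambda(S)+1$, so $\lambda(S)\leq n-2$. The idea is that moving a single element of a split vertex triple from $S$ to $W(G)-S$ (or vice versa) changes each of $r(S)$ and $r(W(G)-S)$ by at most $1$, hence changes $\lambda(S)$ by at most... well, this needs care, since both ranks can drop. The cleaner tool is submodularity of $\lambda$: for a split triple $\tau_G(v)$, consider the four relevant sets obtained by distributing $\tau_G(v)\cap S$ and the rest of $\tau_G(v)$ between the two sides. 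I would use that $\lambda$ is submodular and symmetric ($\lambda(T)=\lambda(W(G)-T)$), together with the fact that for any two elements $e,f$ of the same vertex triple the third element lies in their closure (vertex triples have rank $\leq 2$ and are dependent), to show that one of the two ``un-split'' modifications $S'$---either $S\cup\tau_G(v)$ or $S\setminus\tau_G(v)$---still has $\lambda(S')\leq\lambda(S)<k$. The point is that adding the at-most-two missing elements of $\tau_G(v)$ to $S$ raises $r(S)$ by at most the number added, but because those elements are in the span of $S\cup(\text{the part of }\tau_G(v)\text{ already in }W(G)-S)$, a submodularity/closure argument pins the increase in $\lambda$ down to $0$; symmetrically on the other side. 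One of the two sides must work.

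Having fixed $\lambda(S')<k$, I must still check $S'$ is a \emph{vertical} $k$-separation, i.e.\ $r(S'),r(W(G)-S')\geq k$. For the side that grew this is automatic ($r$ only increases). For the side $T'$ that shrank---say we passed from $S$ to $S'=S\cup\tau_G(v)$, so $W(G)-S'=(W(G)-S)\setminus\tau_G(v)$ lost at most two elements---I need $r((W(G)-S)\setminus\tau_G(v))\geq k$. Here I would use $\lambda(S')<k$ together with $r(S')\geq r(S)\geq k$ and the definition $\lambda(S')=r(S')+r(W(G)-S')-n$: this gives $r(W(G)-S')=\lambda(S')-r(S')+n< k-k+n=n$, which is not quite enough on its own. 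Instead, note $r(W(G)-S')=\lambda(S')+n-r(S')$; if $r(S')=r(S)$ then $r(W(G)-S')=\lambda(S')+n-r(S)\geq ?$---I'd combine with $r(W(G)-S)\geq k$ and the fact that $W(G)-S'$ differs from $W(G)-S$ by at most two elements from a single triple, at least one of which (by the split hypothesis) was present, to bound the drop in rank by $1$ and then handle the possible off-by-one by the parity observation that $\lambda(\tau$-type sets$)$ is even (Lemma~\ref{lem:trans_cut_rank}) and by re-choosing which side to enlarge. This off-by-one bookkeeping---ensuring the shrunken side keeps rank $\geq k$, not merely $\geq k-1$---is the main obstacle, and is exactly where I expect to lean hardest on submodularity of $\lambda$ and on $k=\kappa(M)$ being the \emph{minimum} vertical separation value (so a vertical $(k-1)$-separation would itself be a contradiction, letting me upgrade certain $\geq k-1$ bounds to $\geq k$). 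Once the modification $S'$ is shown to be a vertical $k$-separation splitting strictly fewer triples than $S$, we contradict minimality; hence $S=\tau_G(X)$ for some $X\subseteq V(G)$, and Proposition~\ref{purechar} gives $\kappa(M)=k=2c_G(X)+1$.
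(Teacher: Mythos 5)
Your strategy is the same as the paper's: take a vertical $\kappa(M)$-separation, absorb the split vertex triples one at a time using the dependence of each triple, and finish with Proposition~\ref{purechar}. But the step you yourself flag as ``the main obstacle'' is a genuine gap, and the plan you sketch for it would not go through as written. The paper closes it by committing to a definite choice of side: since a split triple $\tau_{G}(v)$ has exactly two of its elements on one side, replace $S$ by its complement if necessary so that $S$ contains precisely two elements of $\tau_{G}(v)$, and set $S'=S\cup\tau_{G}(v)$. Then $r(S')=r(S)$, because the third column of the triple is the $GF(2)$-sum of the two already in $S$, and $W(G)-S'$ loses only one element, so $r(W(G)-S')\geq r(W(G)-S)-1$. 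If the drop is exactly $1$, then $\lambda(S')=\lambda(S)-1<k-1$ while $r(S')\geq k$ and $r(W(G)-S')\geq k-1$, so $S'$ is a vertical $(k-1)$-separation, contradicting $k=\kappa(M)$; hence the ranks are unchanged and $S'$ is again a vertical $k$-separation with fewer split triples. (This contradiction needs $k\geq 2$; the paper handles $k=1$ separately by exhibiting $\tau_{G}(V(C))$ for a connected component $C$ of the then-disconnected $G$. Alternatively, when $\lambda(S)=0$ the drop is impossible since $\lambda\geq 0$. Your sketch never addresses $k=1$.)

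By contrast, you keep both moves $S\cup\tau_{G}(v)$ and $S\setminus\tau_{G}(v)$ in play and assert the shrunken side's rank drops by at most $1$; that is false for the wrong move. If $S$ contains only one element of the triple and you enlarge $S$, the other side loses two elements, its rank can fall by $2$, and $\lambda$ can even increase, so ``at least one of which was present'' does not bound the drop by $1$. The vague fallback of ``re-choosing which side to enlarge'' is exactly the point that must be made precise, and the appeals to submodularity of $\lambda$ and to the parity $\lambda(\tau_{G}(X))=2c_{G}(X)$ are neither needed nor helpful here (the intermediate sets $S'$ are not of the form $\tau_{G}(X)$, so the parity fact says nothing about them). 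The whole argument is elementary rank counting plus the minimality of $\kappa(M)$, which you correctly identified as the lever but did not actually apply.
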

\begin{proof}
Let $k = \kappa(M)$. Since $k < n = r(M)$, $M$ has a vertical $k$-separation but no vertical $k^{\prime}$-separation with $k^{\prime}<k$.

If $k=1$ then $M$ is not 2-connected and $r(M) = n > k = 1$, so $G$ is disconnected and $n>1$. If $C$ is a connected component of $G$ then $X=\tau_{G}(V(C))$ is a vertical 1-separation of $M$. Every entry of $A[V(C),V(G)-V(C)]$ is $0$, so $c_{G}(X)=0$ and $k=1=2c_{G}(X)+1$.

Suppose $k>1$, and let $S$ be a vertical $k$-separation of $M$ that is not of the form $\tau_{G}(X)$. Then
interchanging the names of $S$ and $W(G)-S$ if necessary, we may presume that
there is a vertex triple $\tau_{G}(v)$ such that $S$ contains precisely two
elements of $\tau_{G}(v)$. Let $S^{\prime}=S\cup\tau_{G}(v)$. Then
$r(S^{\prime})=r(S)$ and $r(W(G)-S^{\prime})\in\{r(W(G)-S)-1,r(W(G)-S)\}$. If
$r(W(G)-S^{\prime})=r(W(G)-S)-1$ then we have $r(S^{\prime}),r(W(G)-S^{\prime
})\geq k-1$ and $r(S^{\prime})+r(W(G)-S^{\prime}%
)=r(S)+r(W(G)-S)-1<r(W(G))+k-1$, so $S^{\prime}$ is a vertical $(k-1)$-separation, contradicting the choice of $k$. Consequently $r(W(G)-S^{\prime})=r(W(G)-S)$, so $S^{\prime}$ is a vertical $k$-separation. Repeating this modification as many times as necessary, we ultimately obtain a vertical
$k$-separation $\tau_{G}(X)$. Proposition \ref{purechar} tells us that $k=2c_{G}(X)+1$. 
\end{proof}

\begin{corollary}
\label{vconn}
If every $X \subseteq V(G)$ has $c_{G}(X)=\min\{\left\vert X\right\vert ,\left\vert V(G)-X\right\vert \}$, then the vertical connectivity of $M=M[IAS(G)]$ is $n$. Otherwise, 
\[\kappa(M)=1+2\cdot\min\{c_{G}(X)\mid X\subseteq V(G)\text{ has }c_{G}(X)<\min\{\left\vert X\right\vert ,\left\vert V(G)-X\right\vert\}\}.
\]
\end{corollary}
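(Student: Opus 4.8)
The plan is to read off Corollary~\ref{vconn} from Propositions~\ref{purechar} and~\ref{puresep} together with the definition of $\kappa(M)$. Throughout, recall that $r(M)=n$, so $\kappa(M)=\min(\{k\mid M\text{ has a vertical }k\text{-separation}\}\cup\{n\})$; in particular, if $\kappa(M)<n$ then the minimum is not attained by the trailing term $n$, so $M$ actually has a vertical $\kappa(M)$-separation, and Proposition~\ref{puresep} refines this to one of the form $\tau_G(X)$.

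For the first assertion, suppose every $X\subseteq V(G)$ has $c_G(X)=\min\{|X|,|V(G)-X|\}$, and argue by contradiction that $\kappa(M)<n$ is impossible. If $\kappa(M)<n$, then Proposition~\ref{puresep} furnishes an $X$ with $\tau_G(X)$ a vertical $\kappa(M)$-separation and $\kappa(M)=2c_G(X)+1\geq1$; since this parameter is $\geq1$, Proposition~\ref{purechar} applies to $\tau_G(X)$ and yields $c_G(X)<\min\{|X|,|V(G)-X|\}$, contradicting the hypothesis. Hence $\kappa(M)=n$. (The edge case $n=0$ falls under this assertion and is immediate: $M$ is empty and $\kappa(M)=0=n$.)

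For the second assertion, write $m$ for the minimum appearing in the displayed formula; it exists because the hypothesis of this case makes the indexing set nonempty. Choosing $X$ with $c_G(X)=m$, Proposition~\ref{purechar} says $\tau_G(X)$ is a vertical $(2m+1)$-separation, so $\kappa(M)\leq 2m+1$; moreover $m<\min\{|X|,|V(G)-X|\}\leq\lfloor n/2\rfloor$ forces $2m+1<n$, so this bound genuinely constrains $\kappa(M)$ and in particular $\kappa(M)<n$. Conversely, since $\kappa(M)<n$, Proposition~\ref{puresep} provides a $Y$ with $\tau_G(Y)$ a vertical $\kappa(M)$-separation and $\kappa(M)=2c_G(Y)+1\geq1$; then Proposition~\ref{purechar} gives $c_G(Y)<\min\{|Y|,|V(G)-Y|\}$, so $c_G(Y)\geq m$ by minimality of $m$, whence $\kappa(M)=2c_G(Y)+1\geq 2m+1$. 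Combining the two inequalities gives $\kappa(M)=2m+1$, as claimed.

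I do not expect a genuine obstacle here, since all the content already resides in Propositions~\ref{purechar} and~\ref{puresep}. The only points requiring a little care are: invoking Proposition~\ref{purechar} only for separations of the form $\tau_G(X)$ whose parameter is $\geq1$ (automatic, since that parameter equals $2c_G(X)+1$), and verifying $2m+1<n$ in the second case so that the upper bound from Proposition~\ref{purechar} actually improves on the $r(M)=n$ term in the definition of $\kappa(M)$.
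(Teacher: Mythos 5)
Your proof is correct and follows essentially the same route as the paper: both directions are read off from Propositions~\ref{purechar} and~\ref{puresep}, using a minimizing $X$ with Proposition~\ref{purechar} for the upper bound $\kappa(M)\leq 2m+1<n$ and Proposition~\ref{puresep} (plus the converse direction of Proposition~\ref{purechar}) for the lower bound. The only cosmetic difference is that you case-split on the corollary's hypothesis about $c_G$ while the paper splits on whether $M$ has a vertical separation; the substance is identical.
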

\begin{proof}
Suppose $M$ has a vertical $k$-separation for some $k$. Then $M$ has a vertical $\kappa(M)$-separation but no vertical $k^{\prime}$-separation with $k^{\prime}<\kappa(M)$, so Proposition \ref{puresep} tells us that there is a subset $X_{0}\subseteq V(G)$ such that $\kappa(M)=1+2c_{G}(X_{0})<n$, and Proposition~\ref{purechar} tells us that $c_{G}(X_0)<\min\{\left\vert X_{0} \right\vert ,\left\vert V(G)-X_{0} \right\vert \}$. On the other hand, Proposition \ref{purechar} implies that every subset $X\subseteq V(G)$ has $\kappa(M)\leq1+2c_{G}(X)$. For if $\tau_{G}(X)$ is not a vertical $k$-separation for any $k$, then $1+2c_{G}(X)=n>\kappa(M)$; and if $\tau_{G}(X)$ is a vertical $k$-separation for some $k$, then $\tau_{G}(X)$ is a vertical $(2c_{G}(X)+1)$-separation, so $2c_{G}(X)+1\geq\kappa(M)$.

If $M$ has no vertical $k$-separation for any $k$ then
$\kappa(M)=n$ by definition, and Proposition \ref{purechar} tells us that $c_{G}(X)=\min\{\left\vert X\right\vert ,\left\vert V(G)-X\right\vert \}$ for all $X \subseteq V(G)$.
\end{proof}

Corollary~\ref{vconn} tells us that $\kappa(M)$ cannot be even unless $n$ is even and $M$ has no vertical $k$-separation for any $k$.

Bouchet \cite{Bconn} described the connectivity of an isotropic system using the cut-rank function of any graph associated with that isotropic system. This concept was studied further by Allys \cite{A} and Bouchet and Ghier \cite{BG}.

\begin{definition}
\label{isosep}(\cite{Bconn}) Let $G$ be a graph. Then a subset $X\subseteq V(G)$ is an \emph{isotropic $k$-separation} of $G$ if $\left\vert X\right\vert ,\left\vert V(G)-X\right\vert \geq k$ and $c_{G}(X) < k$.
\end{definition}

\begin{definition}
\label{isoconnect}(\cite{Bconn}) Let $G$ be a graph. Then the \emph{isotropic connectivity} of $G$ is defined as
$$
\kappa_{B}(G)= \min(\{k \mid \text{there is an isotropic $k$-separation of $G$}\}),
$$
where, again, by convention, we take $\min(\emptyset) = \infty$.
\end{definition}

Notice that if $X$ satisfies Definition \ref{isosep} for some value of $k$, then we have $c_{G}(X)<\min\{\left\vert X\right\vert ,\left\vert V(G)-X\right\vert \}$ and the smallest value of $k$ for which $X$ satisfies Definition \ref{isosep} is $c_{G}(X)+1$. We deduce Theorem~\ref{Bsep}:
\[
\kappa_{B}(G) =
\begin{cases}
\infty & \text{if }\kappa(M[IAS(G)])=n\\
\frac{\kappa(M[IAS(G)])+1}{2} & \text{otherwise}
\end{cases}.
\]

\section{Splits of \texorpdfstring{$G$}{\textit{G}} and separations of \texorpdfstring{$M[IAS(G)]$}{\textit{M[IAS(G)]}}}
\label{sequent}

Before providing proofs of Theorems~\ref{vconnect} and~\ref{degree}, we take a moment to point out an interesting consequence of the discussion above. Namely: if $G$ is a graph then there is a close relationship between splits of $G$ and decompositions of the matroid $M[IAS(G)]$.

Theorem~\ref{cconnect} gives us the easy parts of this relationship. First, if $G$ is not connected then components of $G$ and $M[IAS(G)]$ correspond, with the proviso that an isolated vertex of $G$ corresponds to two matroid components. Second, if $G$ is connected then two-element splits of $G$ correspond to ordinary 2-separations of $M[IAS(G)]$. These in turn give rise to a description of $M[IAS(G)]$ using 2-sums; see~\cite[Section 8.3]{O} for details regarding the relation between 2-sums and ordinary 2-separations.

Suppose now that $M[IAS(G)]$ is 3-connected. Then Theorem~\ref{cconnect} tells us that $G$ is connected and $n\geq5$, as every connected graph with $n\leq4$ has a pendant vertex or a pair of twins. It turns out that splits of $G$ are related to a certain type of ordinary 3-separation, which has been studied by Oxley, Semple and Whittle~\cite{OSW, OSW2}. We begin with a couple of lemmas.

\begin{lemma}
\label{bigrank}
Let $M$ be a 3-connected isotropic matroid.
\begin{enumerate}
\item Every subset $S\subseteq W(M)$ with $\left\vert S \right\vert \geq 4$ has $r(S)\geq3$.
\item Every subset $S\subseteq W(M)$ with $\left\vert S \right\vert \geq 3n-5$ has $r(S)=n$.
\end{enumerate}
\end{lemma}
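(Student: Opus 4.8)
The plan is to use the connectivity properties of $M$ established so far, together with the structural description of $r(S)$ from the proof of the previous proposition, namely that $r(S)\geq|S_\phi|$ and that if $r(S)=|S_\phi|$ then no element of $S_\chi\cup S_\psi$ has a neighbor outside $S_\phi$. I would prove the two parts separately, the first directly and the second by a counting/duality argument.

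For part (1), suppose toward a contradiction that $|S|\geq4$ but $r(S)\leq2$. Since vertex triples always have rank $\leq2$ and cardinality $3$, and since by Proposition~\ref{connected} $G$ is connected with $n\geq5$ (so $r(M)=n\geq5$ and $|W(M)|-r(M)=2n$ are both large), the set $S$ together with its complement $W(M)-S$ both have cardinality $\geq2$; if $r(S)\leq2$ then $\lambda(S)=r(S)+r(W(M)-S)-n\leq r(S)+n-n=r(S)\leq2$, so $S$ would be an ordinary $3$-separation and (since the complementary columns also sum appropriately) a cyclic $3$-separation, contradicting $3$-connectedness — provided I can also check $|W(M)-S|\geq3$ and that $W(M)-S$ is dependent, which follows because $|W(M)|=3n\geq15$ and because any set of $\geq n+1$ columns of $IAS(G)$ is dependent. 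The only delicate point is ruling out $r(S)\leq2$ with $|S|=4$ when the cardinality conditions for a $3$-separation might fail; but they cannot fail here since $3n-4\geq11>3$. So part (1) reduces to the observation that a low-rank set of size $\geq4$ would furnish a $3$-separation.

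For part (2), I would argue by complementation: if $|S|\geq 3n-5$ then $|W(M)-S|\leq5$. I want to show $r(S)=n=r(M)$, equivalently that $W(M)-S$ spans together with... actually more precisely, I want $r(S)=r(M)$, i.e. $S$ is spanning. Suppose not; then $\lambda(W(M)-S)=r(W(M)-S)+r(S)-n$, and since $r(S)<n$ we get $r(S)\leq n-1$, hence $\lambda(W(M)-S)\leq r(W(M)-S)-1\leq|W(M)-S|-1\leq4$; I want to push this to get a contradiction with $3$-connectedness, so I need to show $W(M)-S$ (which has $\leq5$ elements) together with the fact that $r(S)<n$ forces a cyclic or ordinary $3$-separation. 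The cleanest route is: among the $\leq5$ columns of $W(M)-S$, if $r(S)<n$ then $W(M)-S$ must contain a "spanning-deficient" configuration; I would show that a set $T=W(M)-S$ with $|T|\leq5$ and $r(W(M)-T)<n$ must actually have $\lambda(T)\leq2$, because $\lambda(T)=r(T)+r(W(M)-T)-n\leq r(T)+(n-1)-n=r(T)-1$ and $r(T)\leq3$ by... no, $r(T)$ could be up to $5$. So instead I would case on how the $\leq5$ columns of $T$ distribute among vertex triples: the total number of vertices involved is small, and using that $G$ is connected with $n\geq5$ one shows the rows outside those vertices already force $r(W(M)-T)=n$ unless $T$ contains a full vertex triple (rank $\leq2$) or a near-full configuration, in which case $\lambda(T)\leq2$ and $T$ is an ordinary and cyclic $3$-separation, contradiction.

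The main obstacle I anticipate is part (2): controlling exactly which sets $T$ of size $\leq5$ can be non-spanning-complements requires a careful case analysis on the intersection pattern of $T$ with vertex triples (how many triples are fully inside $T$, partially inside, etc.), and in each case verifying both that $\lambda(T)\leq2$ and that $W(M)-T$ remains dependent so that $T$ is genuinely a cyclic $3$-separation. Part (1) should be essentially immediate from the definition of $3$-connectedness once the trivial cardinality bookkeeping is done. I expect the authors invoke part (1) inside the proof of part (2) to shortcut some of this casework — e.g. to conclude that certain subsets of $T$ of size $\geq4$ have rank $\geq3$, which constrains $r(T)$ and hence $\lambda(T)$.
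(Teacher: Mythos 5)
The central step in both parts of your plan is the claim that exhibiting an ordinary (or cyclic) $3$-separation contradicts $3$-connectedness, and that claim is false: $3$-connectedness of $M$ means only that $M$ has no ordinary $1$- or $2$-separation, i.e.\ $\tau(M)\geq 3$; ordinary and cyclic $3$-separations are perfectly compatible with it. Indeed every isotropic matroid of a connected graph with $n>1$ has them in abundance --- each vertex triple $\tau_{G}(v)$ has rank $\leq 2$, so it is an ordinary and a cyclic $3$-separation, which is exactly why $\tau(M[IAS(G)])=\kappa^{\ast}(M[IAS(G)])=3$ in Theorem~\ref{cconnect}. So in part (1) the inequality $\lambda(S)\leq 2$ yields no contradiction at all, and the same defect undermines the endgame of part (2). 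Ruling out a $4$-element set of rank $\leq 2$ genuinely requires more than connectivity-function arithmetic: the paper argues that a $3$-connected matroid has no loops and no parallel pairs, and a loopless, parallel-free $4$-element set of rank $2$ would be $U_{2,4}$, which is not binary. Equivalently, a rank-$2$ subspace of $GF(2)^{n}$ contains only three nonzero vectors, so four columns of rank $\leq 2$ force a zero column or two equal columns, and \emph{those} configurations do produce a $1$- or $2$-separation; that is the step your argument is missing.

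For part (2), besides inheriting the faulty contradiction, your sketch never identifies what actually forces $r(S)=n$. The paper's route is concrete: since $\left\vert W(M)-S\right\vert \leq 5$, after adding to $S$ the third element of every vertex triple that $S$ meets in exactly two elements (which does not change $r(S)$, because triples are dependent) one obtains a set containing $\tau_{G}(X)$ for some $X$ with $\left\vert X\right\vert =n-2$; then Lemma~\ref{lem:trans_cut_rank} gives $r(\tau_{G}(X))=(n-2)+c_{G}(X)$, and the possibilities $c_{G}(X)=0$ and $c_{G}(X)=1$ are excluded because the former would make $G$ disconnected and the latter would make the two vertices outside $X$ twins or a pendant pair, both impossible for a $3$-connected isotropic matroid by Theorem~\ref{cconnect}. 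In other words, it is the graph-theoretic consequences of $3$-connectedness (connected, no pendant vertex, no twins), fed through the cut-rank formula, that close part (2) --- not a count of separations among the $\leq 5$ leftover columns --- and your proposed case analysis on how $W(M)-S$ meets the vertex triples would have to be rebuilt around this.
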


\begin{proof}
As $M$ is 3-connected, it has no loop or pair of parallels. The only 4-element matroid of rank 2 without a loop or parallels is $U_{2,4}$, which is not binary. Consequently the rank of a 4-element set in $M$ must be $\geq3$.

The second assertion takes a little more work. Let $M = M[IAS(G)]$. Suppose first that there is a subset $X\subseteq V(G)$ with $\left\vert X \right\vert = n-2$ and $\tau_{G}(X)\subseteq S$. If $c_{G}(X)=0$ then no edge of $G$ connects $X$ to $V(G)-X$, so $G$ is not connected. If $c_{G}(X)=1$ then $G$ has a split $(V_1,W_1;V_2,W_2)$ with $V_1=X$; either the two elements of $V(G)-X$ are twins, or one is pendant on the other. According to Theorem~\ref{cconnect}, then, the hypothesis that $M$ is 3-connected requires $c_{G}(X)\geq 2$; as $c_{G}(X)\leq \left\vert V(G)-X \right\vert =2$, it follows that $c_{G}(X) = 2$. Lemma~\ref{lem:trans_cut_rank} tells us that $r(\tau_{G}(X))=n-2+c_{G}(X)=n$, so $r(S)=n$ too.

Now, suppose there is no subset $X\subseteq V(G)$ with $\left\vert X \right\vert = n-2$ and $\tau_{G}(X) \subseteq S$. Then there is a vertex $v\in V(G)$ such that $S$ includes precisely two elements of $\tau_{G}(v)$. Let $\{v_1,\ldots,v_j\}$ include all such vertices, and let $S^{\prime}=S\cup \tau_{G}(v_1) \cup \cdots \cup \tau_{G}(v_j)$. Each vertex triple $\tau_{G}(v_i)$ is dependent, so $r(S)=r(S^{\prime})$. The argument of the preceding paragraph applies to $S^{\prime}$, so $r(S^{\prime})=n$.
\end{proof}

\begin{corollary}
\label{foursep}
Let $M$ be a 3-connected isotropic matroid with ground set $W$. Then
\begin{eqnarray*}
&&\{\text{ordinary 3-separations S of }M \text{ such that } \left\vert S \right\vert,\left\vert W-S \right\vert \geq 6\} \\
&=&\{\text{ordinary 3-separations S of }M \text{ such that } \left\vert S \right\vert,\left\vert W-S \right\vert \geq 4\} \\
&=& \{\text{vertical 3-separations of }M\} \text{.}
\end{eqnarray*}
\end{corollary}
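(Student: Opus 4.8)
The plan is to prove the two displayed equalities by establishing a chain of inclusions that closes up into equality. Write the three sets as $\mathcal{A}_6$, $\mathcal{A}_4$, and $\mathcal{V}$ respectively. Since $\left\vert S \right\vert \geq 6$ trivially implies $\left\vert S \right\vert \geq 4$, we have $\mathcal{A}_6 \subseteq \mathcal{A}_4$ immediately. So the work is to show $\mathcal{A}_4 \subseteq \mathcal{V}$ and $\mathcal{V} \subseteq \mathcal{A}_6$; composing these with the trivial inclusion then forces all three sets to coincide.

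For $\mathcal{A}_4 \subseteq \mathcal{V}$: let $S$ be an ordinary $3$-separation with $\left\vert S \right\vert, \left\vert W-S \right\vert \geq 4$. By part 1 of Lemma~\ref{bigrank}, a $4$-element subset of a $3$-connected isotropic matroid has rank $\geq 3$, so $r(S) \geq 3$ and $r(W-S) \geq 3$. Combined with $\lambda(S) < 3$, this is exactly the definition of a vertical $3$-separation. (Alternatively one can invoke detail (\textit{iii}) of Definition~\ref{seps} to note a vertical $3$-separation is automatically ordinary, but here we need the other direction, which is where Lemma~\ref{bigrank}(1) is essential.)

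For $\mathcal{V} \subseteq \mathcal{A}_6$: let $S$ be a vertical $3$-separation. By detail (\textit{iii}), $S$ is an ordinary $3$-separation, so $\left\vert S \right\vert, \left\vert W-S \right\vert \geq 3$; the point is to upgrade this to $\geq 6$. Suppose for contradiction that $\left\vert S \right\vert \leq 5$. Since $\left\vert W \right\vert = 3n$, this gives $\left\vert W-S \right\vert \geq 3n-5$, and part 2 of Lemma~\ref{bigrank} yields $r(W-S) = n = r(M)$. Then $\lambda(S) = r(S) + r(W-S) - r(M) = r(S) \geq 3$, contradicting $\lambda(S) < 3$. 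Hence $\left\vert S \right\vert \geq 6$, and symmetrically $\left\vert W-S \right\vert \geq 6$, so $S \in \mathcal{A}_6$.

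The main obstacle is really just having Lemma~\ref{bigrank} in hand — both halves of that lemma do the heavy lifting, and once it is available the corollary is a short sandwiching argument. The only mild subtlety to be careful about is bookkeeping with the conventions: ensuring that ``ordinary $3$-separation'' already supplies $\left\vert S \right\vert, \left\vert W-S \right\vert \geq 3$ so that Lemma~\ref{bigrank}(1) applies in the first inclusion, and tracking the arithmetic $3n - \left\vert S \right\vert \geq 3n-5$ in the second. No new ideas beyond Lemma~\ref{bigrank} and the definitions in Section~\ref{defs} should be needed.
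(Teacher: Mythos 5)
Your proposal is correct and follows essentially the same route as the paper: the trivial inclusion, then Lemma~\ref{bigrank}(1) to pass from ordinary 3-separations of size $\geq 4$ to vertical 3-separations, then Lemma~\ref{bigrank}(2) to force size $\geq 6$. The only cosmetic difference is that in the last step the paper invokes detail (\textit{iv}) ($r(M)>r(S)$) and the contrapositive of Lemma~\ref{bigrank}(2), whereas you run the same computation as a direct contradiction via $\lambda(S)=r(S)\geq 3$; these are interchangeable.
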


\begin{proof}
It is obvious that the first set is contained in the second.

If $S$ is an element of the second set, then the first assertion of Lemma~\ref{bigrank} tells us that $r(S),r(W-S) \geq 3$. It follows that $S$ is a vertical 3-separation of $M$. 

Now, let $S$ be a vertical 3-separation of $M$. Without loss of generality, we may presume that $\left\vert S \right\vert \geq \left\vert W-S \right\vert$. Properties (\textit{iii}) and (\textit{iv}) of Section~\ref{defs} tell us that $S$ is an ordinary 3-separation of $M$, and $r(M)>r(S)$. The second assertion of Lemma~\ref{bigrank} then tells us that $\left\vert S \right\vert \leq 3n-6=\left\vert W \right\vert-6$, so $\left\vert S \right\vert \geq \left\vert W-S \right\vert \geq 6$.
\end{proof}

\begin{corollary}
\label{bigsep}
A 3-connected isotropic matroid has no ordinary or vertical 3-separation of size 4 or 5.
\end{corollary}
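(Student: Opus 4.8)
The plan is to read off Corollary~\ref{bigsep} from the results already in hand. First I would recall that a 3-connected isotropic matroid $M=M[IAS(G)]$ has $n\geq5$: by Theorem~\ref{cconnect}, $M$ being 3-connected forces $G$ to be connected with no pendant vertex and no pair of twins, and every connected graph on at most four vertices has a pendant vertex or a pair of twins. Hence $|W(M)|=3n\geq15$.

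It then suffices to rule out ordinary 3-separations of size $4$ or $5$, since by property~(\textit{iii}) of Section~\ref{defs} every vertical 3-separation is also an ordinary 3-separation. Suppose, for contradiction, that $S\subseteq W(M)$ is an ordinary 3-separation with $|S|\in\{4,5\}$. The quickest route is via Corollary~\ref{foursep}: since $|W(M)-S|=3n-|S|\geq10$ and $|S|\geq4$, the set $S$ belongs to the second family displayed in Corollary~\ref{foursep}, which coincides with the first family there, forcing $|S|\geq6$ --- a contradiction. Alternatively, one can argue directly from Lemma~\ref{bigrank}: as $|W(M)-S|\geq 3n-5$, part~2 of that lemma gives $r(W(M)-S)=n$, and then $\lambda(S)<3$ forces $r(S)\leq 2$, contradicting part~1 of the lemma, which gives $r(S)\geq3$ because $|S|\geq4$.

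Either way the argument is a short deduction, so I do not expect a real obstacle; the only point that requires attention is the elementary estimate $|W(M)-S|\geq4$ (indeed $\geq 3n-5$) whenever $|S|\leq5$, which is exactly where the bound $n\geq5$ --- equivalently $|W(M)|=3n\geq15$ --- is used.
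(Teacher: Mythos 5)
Your proposal is correct and follows essentially the same route as the paper, which states Corollary~\ref{bigsep} as an immediate consequence of Corollary~\ref{foursep}; your alternative derivation via Lemma~\ref{bigrank} is just the underlying argument of that corollary made explicit. The reduction of the vertical case via property~(\textit{iii}) and the size estimate using $n\geq5$ are both handled correctly.
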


The theory of Oxley, Semple and Whittle~\cite{OSW, OSW2} excludes a special type of ordinary 3-separation.

\begin{definition}
\label{sequential}
Let $S$ be an ordinary 3-separation of a matroid $M$. Then $S$ is \emph{sequential} if the elements of $S$ can be ordered as $s_1,\ldots,s_m$ in such a way that $\lambda(\{s_1,\ldots,s_i\})<3$ for every $i\in\{1,\ldots,m\}$. $S$ is also considered to be sequential if its complement has such an ordering. 
\end{definition}

\begin{corollary}
\label{3sequential}
Let $S$ be an ordinary 3-separation of a 3-connected isotropic matroid. Then $S$ is sequential if and only if either $S$ or its complement is of cardinality 3.
\end{corollary}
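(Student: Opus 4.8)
The plan is to prove the two implications separately, with the forward implication resting almost entirely on Corollary~\ref{bigsep}.

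For the "if" direction, I would suppose that $S$ has exactly three elements, say $S=\{s_1,s_2,s_3\}$ listed in any order, and verify the condition of Definition~\ref{sequential} by direct computation: $\lambda(\{s_1\})=r(\{s_1\})+r(W-\{s_1\})-r(M)\le 1+r(M)-r(M)=1<3$, similarly $\lambda(\{s_1,s_2\})\le 2<3$, and $\lambda(\{s_1,s_2,s_3\})=\lambda(S)<3$ since $S$ is an ordinary $3$-separation. Hence $S$ is sequential. If instead it is the complement $W-S$ that has three elements, then $W-S$ is also an ordinary $3$-separation (because $\lambda(W-S)=\lambda(S)<3$), so the same computation produces an admissible ordering of $W-S$, and $S$ is sequential by the second sentence of Definition~\ref{sequential}.

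For the "only if" direction, I would suppose $S$ is sequential. By Definition~\ref{sequential}, $S$ or $W-S$ admits an ordering $s_1,\dots,s_m$ with $\lambda(\{s_1,\dots,s_i\})<3$ for all $i$; since $\lambda$ is invariant under complementation, $W-S$ is an ordinary $3$-separation exactly when $S$ is, so it suffices to treat the case where $S$ itself carries such an ordering (in the complementary case the conclusion $|W-S|=3$ is equally acceptable). Let $m=|S|$; since $S$ is an ordinary $3$-separation, $m\ge 3$. If $m=3$ we are done, so assume $m\ge 4$ and put $T=\{s_1,s_2,s_3,s_4\}$. Then $\lambda(T)<3$ and $|T|=4\ge 3$. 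Because $M$ is a $3$-connected isotropic matroid, Theorem~\ref{cconnect} forces its underlying graph to be connected with at least five vertices, so $|W|=3n\ge 15$ and $|W-T|\ge 11\ge 3$; thus $T$ is an ordinary $3$-separation of $M$ of size $\min(|T|,|W-T|)=4$. This contradicts Corollary~\ref{bigsep}, so $m=3$.

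The main (really the only nontrivial) point is the observation that a length-$4$ prefix of a sequential ordering is itself an ordinary $3$-separation of size $4$; once this is in place, Corollary~\ref{bigsep} (or, alternatively, Corollary~\ref{foursep}) closes the argument at once. The secondary care needed is the bookkeeping around the definition of "sequential" — the ordering may live on $S$ or on $W-S$ — which is dispatched using the symmetry $\lambda(S)=\lambda(W-S)$.
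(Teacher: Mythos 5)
Your proof is correct and takes essentially the same route as the paper: the cardinality-3 case is verified directly from the definition of $\lambda$, and for the converse a four-element prefix $\{s_1,s_2,s_3,s_4\}$ of the sequential ordering is an ordinary 3-separation of size 4, contradicting Corollary~\ref{bigsep}. Your additional bookkeeping (reducing to the case where the ordering lives on $S$ via $\lambda(S)=\lambda(W-S)$, and checking $\left\vert W-T\right\vert \geq 3$ from $n\geq 5$) merely makes explicit what the paper leaves implicit.
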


\begin{proof}
If $S$ or its complement is of cardinality 3, then every ordering of the three elements will satisfy Definition~\ref{sequential}. 

For the converse, suppose instead that $S=\{s_1,\ldots,s_m\}$ is a sequential ordinary 3-separation of $M=M[IAS(G)]$ with $m>3$, and that the given order satisfies Definition~\ref{sequential}. Then $S^{\prime}=\{s_1,s_2,s_3,s_4\}$ is an ordinary 3-separation of $M$, contradicting Corollary~\ref{bigsep}.
\end{proof}

Corollaries~\ref{foursep} and~\ref{3sequential} tell us that if $G$ is a connected graph with no pendant or twin vertices, then the non-sequential ordinary 3-separations of $M[IAS(G)]$ and the vertical 3-separations of $M[IAS(G)]$ coincide.

The theory of non-sequential ordinary 3-separations presented by Oxley, Semple and Whittle~\cite{OSW, OSW2} includes the following notion of equivalence.

\begin{definition}
If $M$ is a matroid with ground set $W$ and $S\subseteq W$, then the \emph{full closure} of $S$ is the smallest subset $\operatorname{fcl}(S)\subseteq W$ that contains $S$ and is closed in both $M$ and $M^{\ast}$. Two ordinary 3-separations $S$ and $S^{\prime}$ are \emph{equivalent} if the sets $\{\operatorname{fcl}(S),\operatorname{fcl}(W-S)\}$ and $\{\operatorname{fcl}(S^{\prime}),\operatorname{fcl}(W-S^{\prime})\}$ are the same.
\end{definition}

\begin{proposition}
Suppose $M=M[IAS(G)]$ is 3-connected. Then every non-sequential ordinary 3-separation of $M$ is equivalent to an ordinary 3-separation $\tau_{G}(X)$, where $G$ has a split $(V_1,W_1;V_2,W_2)$ with $V_1=X$.
\end{proposition}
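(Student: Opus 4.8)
The plan is to feed the given separation through the ``rounding'' argument already used in the proof of Proposition~\ref{puresep}, and then to check that each individual move of that argument turns an ordinary $3$-separation into an equivalent one. First I would fix the setup. Since $M=M[IAS(G)]$ is $3$-connected, Theorem~\ref{cconnect} forces $G$ to be connected with no pendant or twin vertices, with $\tau(M)\geq 3$ and (as every connected graph on at most four vertices has a pendant vertex or a twin pair) $n\geq 5$. If $S$ is a non-sequential ordinary $3$-separation of $M$, then Corollary~\ref{3sequential} gives $|S|,|W(G)-S|\geq 4$, so Corollary~\ref{foursep} makes $S$ a vertical $3$-separation; hence $\kappa(M)\leq 3$, while $\kappa(M)\geq\tau(M)\geq 3$, so $\kappa(M)=3<n$. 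If $S$ already has the form $\tau_{G}(X)$ there is nothing to round; otherwise the rounding step in the proof of Proposition~\ref{puresep} applies to $S$, and iterating it produces a chain $S=S^{(0)},S^{(1)},\dots,S^{(m)}$ in which every $S^{(i)}$ is a vertical (hence, by detail (\textit{iii}) of Section~\ref{defs}, ordinary) $3$-separation of $M$, with $S^{(m)}=\tau_{G}(X)$ for some $X\subseteq V(G)$. By Proposition~\ref{purechar}, $3=2c_{G}(X)+1$, so $c_{G}(X)=1<\min\{|X|,|V(G)-X|\}$; since $|X|,|V(G)-X|\geq 2$ and $r(A(G)[X,V(G)-X])=c_{G}(X)=1$, the matrix form of Definition~\ref{prime} produces a split $(V_{1},W_{1};V_{2},W_{2})$ of $G$ with $V_{1}=X$. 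So everything reduces to showing $S$ is equivalent to $\tau_{G}(X)$.

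For that I would check that a single rounding move preserves the unordered pair $\{\operatorname{fcl}(\,\cdot\,),\operatorname{fcl}(W(G)-\,\cdot\,)\}$. Consider a move that adjoins to $S^{(i)}$ the element $e$ of a vertex triple $\tau_{G}(v)$ with $|\tau_{G}(v)\cap S^{(i)}|=2$ (a move on the complementary side is handled by exchanging the two sides throughout). The relation $\phi_{G}(v)+\chi_{G}(v)+\psi_{G}(v)=0$ among the columns of $IAS(G)$ shows that any two of these three elements span the third, so $e\in\operatorname{cl}(S^{(i)})\subseteq\operatorname{fcl}(S^{(i)})$; together with $S^{(i)}\subseteq S^{(i+1)}=S^{(i)}\cup\{e\}\subseteq\operatorname{fcl}(S^{(i)})$ and the monotonicity and idempotence of $\operatorname{fcl}$ this gives $\operatorname{fcl}(S^{(i+1)})=\operatorname{fcl}(S^{(i)})$. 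On the other side, the move is performed precisely when adjoining $e$ does not lower the rank of the complement (otherwise $S^{(i+1)}$ would be a vertical $2$-separation, impossible since $\kappa(M)=3$); that is, $r(W(G)-S^{(i+1)})=r(W(G)-S^{(i)})$, which says exactly $e\in\operatorname{cl}(W(G)-S^{(i+1)})\subseteq\operatorname{fcl}(W(G)-S^{(i+1)})$. Since also $W(G)-S^{(i+1)}\subseteq W(G)-S^{(i)}=(W(G)-S^{(i+1)})\cup\{e\}\subseteq\operatorname{fcl}(W(G)-S^{(i+1)})$, we get $\operatorname{fcl}(W(G)-S^{(i)})=\operatorname{fcl}(W(G)-S^{(i+1)})$. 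Chaining these equalities over $i=0,\dots,m-1$ yields $\{\operatorname{fcl}(S),\operatorname{fcl}(W(G)-S)\}=\{\operatorname{fcl}(\tau_{G}(X)),\operatorname{fcl}(\tau_{G}(V(G)-X))\}$, which is the definition of $S$ being equivalent to $\tau_{G}(X)$.

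The one step that needs care — really the heart of the argument — is the recognition that the rank bookkeeping built into the proof of Proposition~\ref{puresep} is literally the statement ``the adjoined element lies in the closure of the other side''; once this is spotted, only the formal closure-operator properties of $\operatorname{fcl}$ are used. The remaining points are routine: that $\kappa(M)=3$ (which both starts the process and forces the rank of the complement to be preserved at each move), and that every intermediate $S^{(i)}$ is a genuine ordinary $3$-separation so that ``equivalent'' is meaningful along the chain — both of which follow from the invariant, recorded above, that the $S^{(i)}$ stay vertical $3$-separations. Finally, in the degenerate case $S=\tau_{G}(X)$ already, $S$ is trivially equivalent to itself and the split argument of the first paragraph applies verbatim.
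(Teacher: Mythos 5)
Your proposal is correct and takes essentially the same route as the paper: round the separation by absorbing the third element of each split vertex triple, use 3-connectedness to see that the complement's rank (hence its closure and full closure) is unchanged at each step, and finish with Proposition~\ref{purechar} to get $c_G(X)=1$ and thus a split. The only cosmetic difference is that you carry the invariant ``vertical 3-separation'' along the chain, whereas the paper carries ``non-sequential ordinary 3-separation'' via the cardinality bounds of Corollaries~\ref{foursep} and~\ref{3sequential}; you also spell out the full-closure bookkeeping that the paper leaves implicit.
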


\begin{proof}
Let $S$ be a non-sequential ordinary 3-separation which is not of the form $\tau_{G}(X)$. Interchanging the labels of $S$ and $W(G)-S$ if necessary, we may presume that there is a vertex $v$ such that $\left\vert \tau_{G}(v) \cap S \right\vert=2$. 

Let $S^{\prime}=S\cup \tau_{G}(v)$. Then $r(S)=r(S^{\prime})$, and $r(W(G)-S^{\prime})\leq r(W(G)-S)$. If $r(W(G)-S^{\prime})< r(W(G)-S)$ then $S^{\prime}$ is an ordinary 2-separation, an impossibility as $M$ is 3-connected; hence $r(W(G)-S^{\prime})= r(W(G)-S)$. It follows that $S^{\prime}$ is also an ordinary 3-separation. Corollaries~\ref{foursep} and~\ref{3sequential} tell us that $\left\vert S \right\vert, \left\vert W(G)-S \right\vert \geq 6$, so it must be that $\left\vert S^{\prime} \right\vert, \left\vert W(G)-S^{\prime} \right\vert \geq 5$; then Corollary~\ref{3sequential} tells us that $S^{\prime}$ is a non-sequential 3-separation. Moreover $S$ and $S^{\prime}$ have the same closure, and $W(G)-S$ and $W(G)-S^{\prime}$ have the same closure. 

Repeating this process as many times as possible, we eventually obtain a non-sequential ordinary 3-separation $\tau_{G}(X)$ such that $S$ and $\tau_{G}(X)$ have the same closure, and $W(G)-S$ and $W(G)-\tau_{G}(X)$ have the same closure. Then $S$ is equivalent to $\tau_{G}(X)$.

As $\tau_{G}(X)$ is a non-sequential ordinary 3-separation, Corollary~\ref{3sequential} tells us that $\left\vert \tau_{G}(X) \right\vert$ and $\left\vert W(G)-\tau_{G}(X) \right\vert$ are both $>3$. Consequently $\left\vert X \right\vert$ and $\left\vert V(G)-X \right\vert$ are both $>1$. Also, as noted after Corollary~\ref{3sequential} the fact that $\tau_{G}(X)$ is a non-sequential ordinary 3-separation implies that $\tau_{G}(X)$ is also a vertical 3-separation. Then Proposition~\ref{purechar} implies that $c_G(X)=1$, so $G$ has a split $(V_1,W_1;V_2,W_2)$ with $V_1=X$.
\end{proof}

As discussed in~\cite[Section 9.3]{O}, it follows that if $M[IAS(G)]$ is 3-connected then the splits in $G$ give rise to a description of $M[IAS(G)]$ using 3-sums.

\section{Theorem \ref{degree}}
\label{degreeproof}

In this section we prove an expanded form of Theorem \ref{degree}. We begin with a result about small transverse circuits.

\begin{proposition}
\label{smallcirc}Let $q$ be the cardinality of the smallest transverse circuit(s) in $M=M[IAS(G)]$. If $q<\frac{n+1}{2}$, then $2q-1\geq\kappa(M)$.
\end{proposition}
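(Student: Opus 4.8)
The plan is to take a smallest transverse circuit $C$, of size $q$, and use it to manufacture a vertical $(2q-1)$-separation of $M$; by the definition of $\kappa(M)$ this gives $\kappa(M)\le 2q-1$, which is the desired inequality. A transverse circuit lives inside a transversal, so $C$ meets each of the $n$ vertex triples at most once; say $C$ uses elements from the triples $\tau_G(v)$ with $v$ ranging over a set $Y\subseteq V(G)$ of size $q$. The natural candidate separator is $S=\tau_G(Y)$, the union of those $q$ vertex triples. By Lemma~\ref{lem:trans_cut_rank} we have $r(\tau_G(Y))=|Y|+c_G(Y)=q+c_G(Y)$ and $\lambda(\tau_G(Y))=2c_G(Y)$, and the complement has rank $r(\tau_G(V(G)-Y))=(n-q)+c_G(Y)$.

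First I would bound $c_G(Y)$ from above. Since $C$ is a circuit of size $q$ contained in the span of $\{$columns indexed by $\tau_G(Y)\}$, the rank of $S=\tau_G(Y)$ is at most $q-1$ \emph{within the transversal}, but more to the point: the transverse matroid restricted to $\tau_G(Y)$ has rank $q$ in general (it is a transversal on $q$ triples), and $C$ being a circuit forces the transversal-elements of $C$ to be dependent. The cleaner route is to observe that $c_G(Y)=r(\tau_G(Y))-|Y|$, and a circuit of size $q$ sitting in the restriction to the $q$ triples $\tau_G(Y)$ means that some transversal of those triples is dependent, which forces $c_G(Y)\le q-1$; combined with $c_G(Y)\le \min\{|Y|,|V(G)-Y|\}=\min\{q,n-q\}$ and the hypothesis $q<\frac{n+1}{2}$ (so $q\le n-q$, i.e. $q\le \lfloor n/2\rfloor$ and in particular $n-q\ge q>c_G(Y)$ is not yet automatic — this is where care is needed). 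The key numeric point I expect to need is: a smallest transverse circuit having size $q$ forces $c_G(Y)\le q-1$, hence $c_G(Y)<q=|Y|$, and the hypothesis $q<\frac{n+1}{2}$ is exactly what guarantees $c_G(Y)<n-q=|V(G)-Y|$ as well, so that $c_G(Y)<\min\{|Y|,|V(G)-Y|\}$. Then Proposition~\ref{purechar} applies and tells us $\tau_G(Y)$ is a vertical $(2c_G(Y)+1)$-separation of $M$, whence $\kappa(M)\le 2c_G(Y)+1\le 2(q-1)+1=2q-1$, as claimed.

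The main obstacle, then, is the inequality $c_G(Y)\le q-1$: I must show that the existence of a transverse circuit of size $q$ on the triples indexed by $Y$ forces the cut-rank $c_G(Y)$ to be at most $q-1$. The circuit $C$ is a minimal dependent set in some transverse matroid $M|T$ where $T$ is a transversal extending $C$; restricting further to the $q$ triples $\tau_G(Y)$, the $q$ chosen elements of $T$ lying in those triples are exactly $C$, so they are dependent, giving $r(C)\le q-1$. But $r(\tau_G(Y))$ could a priori exceed $r(C)$ by using the \emph{other} two elements of each triple. Here I would invoke the structure of $IAS(G)$: for a fixed $Y$, every transversal of $\{\tau_G(v):v\in Y\}$ spans the same subspace as $\tau_G(Y)$ modulo the identity block — more precisely, from the matrix display in the proof of Lemma~\ref{lem:trans_cut_rank}, $r(\tau_G(Y))=|Y|+r(A[V(G)-Y,Y])$ and this rank is also computed by any single transversal choice of one non-$\phi$ (or $\phi$) element per triple in $Y$ together with the rows outside $Y$; since $C$ is dependent, that transversal rank is $\le q-1+ (\text{something})$. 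I would make this precise by noting that $c_G(Y)=r(A[V(G)-Y,Y])$ equals $r(M|T')-|Y|$ for the particular transversal $T'$ that agrees with $C$ on $Y$ (choosing $\psi$ or $\chi$ off the loop status, $\phi$ elsewhere so as to realize the cut-rank), and $r((M|T')|_{\tau_G(Y)})\le q-1$ because it contains the circuit $C$; subtracting, $c_G(Y)\le q-1$. This is the step that needs the careful bookkeeping with the three column-blocks of $IAS(G)$ and the relationship between transverse circuits and the cut-rank; everything after it is a direct application of Proposition~\ref{purechar}.
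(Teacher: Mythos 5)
Your overall architecture coincides with the paper's: let $Y$ be the set of $q$ vertices whose triples meet a minimum transverse circuit $C$, show $c_G(Y)\le q-1$, use $q<\frac{n+1}{2}$ to get $q\le n-q$, and conclude $\kappa(M)\le 2c_G(Y)+1\le 2q-1$ via Proposition~\ref{purechar} (the paper cites Corollary~\ref{vconn}, which packages the same fact). All of that bookkeeping is fine. The gap is in your justification of the one nontrivial step, $c_G(Y)\le q-1$. The assertion that ``every transversal of $\{\tau_G(v):v\in Y\}$ spans the same subspace as $\tau_G(Y)$ modulo the identity block'' is false --- $\phi_G(Y)$ and $\chi_G(Y)$ generally span different subspaces --- and the identity $c_G(Y)=r(M|T')-|Y|$ for your transversal $T'$ is neither proved nor true in general. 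Concretely, if $C$ contains $\phi$-elements, then projecting the span of $C$ onto the rows indexed by $V(G)-Y$ annihilates those columns, so the dependence $r(C)\le q-1$ only gives information about the columns of $A[V(G)-Y,X_0]$, where $X_0\subseteq Y$ is the set of vertices at which $C$ uses a $\chi$- or $\psi$-element; it does not transfer to a bound on $c_G(Y)=r(A[V(G)-Y,Y])$ by the subtraction you describe. So the central inequality is asserted but not established.

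The inequality is true, and can be repaired in either of two short ways. The paper uses the stronger fact that $C$, being a circuit of a binary matroid, has columns of $IAS(G)$ summing to zero; restricting that relation to the rows in $V(G)-Y$ shows that the columns of $A[V(G)-Y,Y]$ indexed by $X_0$ sum to zero, and $X_0\neq\emptyset$ because the $\phi$-columns alone are columns of an identity matrix and hence independent; therefore $c_G(Y)\le q-1$. Alternatively, using only $r(C)\le q-1$: write $\tau_G(Y)=C\cup C'\cup C''$ as three disjoint one-per-triple layers; since every vertex triple is dependent, $C''$ lies in the span of $C\cup C'$, so $q+c_G(Y)=r(\tau_G(Y))=r(C\cup C')\le r(C)+r(C')\le (q-1)+q$, which is exactly the argument of Lemma~\ref{tranrank} later in the paper (and involves no circularity, since that lemma depends only on Lemma~\ref{dominate}). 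With either repair inserted at that point, the rest of your proposal goes through as written.
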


\begin{proof}
Let $A=A(G)$. If $\zeta$ is a transverse circuit of size $q\leq\frac{n}{2}$ and $X=\{x\in V(G)\mid\tau_{G}(x)\cap\zeta\neq\emptyset\}$, then the columns of $IAS(G)$ corresponding to the elements of $\zeta =  \zeta \cap \tau_{G}(X)$ sum to $0$. It follows that if $X_{0}=\{x\in
X\mid \chi_{G}(x) \in \zeta$ or $\psi_{G}(x) \in \zeta\}$ then the columns of $A[V(G)-X,X]$ corresponding to elements of $X_{0}$ sum to $0$. Consequently 
\[
c_{G}(X)=r(A[X,V(G)-X])<q=\left\vert X\right\vert \leq\left\vert
V(G)-X\right\vert .
\]
Then Corollary \ref{vconn} tells us that $\kappa(M)\leq1+2c_{G}(X)\leq1+2(q-1)$.
\end{proof}

The inequality of Proposition \ref{smallcirc} is strict for some graphs. For instance, if $G$ is the interlacement graph of an Euler circuit of $K_{4,4}$ then $\kappa(M[IAS(G)])=5$, but the smallest transverse circuits are of size 4.

\begin{theorem}
\label{expdegree}If $n\geq4$ then these conditions are equivalent.

\begin{enumerate}
\item Some graph locally equivalent to $G$ has a vertex of degree $<\frac{n-1}{2}$.

\item $G$ has a transverse circuit of size $<\frac{n+1}{2}$.

\item There is a subset $X\subseteq V(G)$ such that $\tau_{G}(X)$ and $\tau_{G}(V(G)-X)$ both contain transverse circuits.

\item $\kappa(M[IAS(G)])<n$.
\end{enumerate}
\end{theorem}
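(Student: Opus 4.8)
The plan is to prove Theorem~\ref{expdegree} by establishing the cycle of implications $1\Rightarrow 2\Rightarrow 3\Rightarrow 4\Rightarrow 1$, using the machinery already developed in Sections~\ref{vsep}--\ref{degreeproof}. The easiest implications are $2\Rightarrow 4$ and $4\Rightarrow 1$, so I would dispatch those first. For $2\Rightarrow 4$: if $G$ has a transverse circuit $\zeta$ of size $q<\frac{n+1}{2}$, then $q\le\frac n2$, so Proposition~\ref{smallcirc} gives $\kappa(M[IAS(G)])\le 2q-1<n$. For $4\Rightarrow 1$: if $\kappa(M)<n$, then by Proposition~\ref{puresep} there is an $X\subseteq V(G)$ with $\kappa(M)=2c_G(X)+1$ and (by Proposition~\ref{purechar}) $c_G(X)<\min\{|X|,|V(G)-X|\}$. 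Choosing such an $X$ with $c_G(X)$ minimal, one checks that $2c_G(X)+1<n$ forces $c_G(X)<\frac{n-1}2$; the rows of the submatrix $A(G)[V(G)-X,X]$ then live in a space of dimension $c_G(X)$, so some $GF(2)$-combination argument (or a counting/pigeonhole argument on the $2^{c_G(X)}$ possible column patterns) produces, after a suitable local complementation concentrating the cut-structure, a vertex of low degree. The cleaner route here is probably to invoke $1\Leftrightarrow 2$ and reduce $4\Rightarrow 1$ to $4\Rightarrow 3\Rightarrow 2$, so that local equivalence is only needed once.

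The conceptual heart is the equivalence $1\Leftrightarrow 2$, relating a low-degree vertex in some locally equivalent graph to a small transverse circuit. The key observation is that the neighborhood circuit $\zeta_G(v)$ has size $|N_G(v)|+1=\deg(v)+1$ (ignoring the loop contribution, which only affects whether $\chi_G(v)$ or $\psi_G(v)$ appears, not the size). Since local equivalences induce isomorphisms of isotropic matroids preserving vertex triples~\cite[Section 3]{BT1}, transverse circuits of $M[IAS(G)]$ correspond bijectively to transverse circuits of $M[IAS(H)]$ for any $H$ locally equivalent to $G$. Thus if $H\sim G$ has a vertex $v$ of degree $<\frac{n-1}2$, then $\zeta_H(v)$ is a transverse circuit of size $\deg_H(v)+1<\frac{n+1}2$ in $M[IAS(H)]\cong M[IAS(G)]$, giving $1\Rightarrow 2$. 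For the converse $2\Rightarrow 1$, I expect one needs the fact (from~\cite{BT1}, presumably the "Corollary 7" cited in the introduction) that every transverse circuit of $M[IAS(G)]$ is of the form $\zeta_H(v)$ for some vertex $v$ and some graph $H$ locally equivalent to $G$ --- i.e., small transverse circuits are always realized as neighborhood circuits after a local complementation. Granting that, a transverse circuit of size $q$ becomes a vertex of degree $q-1<\frac{n-1}2$ in the appropriate $H$.

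That leaves $2\Rightarrow 3$ and $3\Rightarrow 2$. For $3\Rightarrow 2$: if $\tau_G(X)$ and $\tau_G(V(G)-X)$ both contain transverse circuits, then in particular one of them has size $\le\frac{3n}{2}$, but more to the point one shows that a transverse circuit inside $\tau_G(X)$ has size $\le|X|$; since also a transverse circuit sits inside $\tau_G(V(G)-X)$ with size $\le|V(G)-X|$, and $|X|+|V(G)-X|=n$, the smaller of the two has size $\le\lfloor n/2\rfloor<\frac{n+1}2$, giving condition~2. For $2\Rightarrow 3$: given a transverse circuit $\zeta$ of size $q<\frac{n+1}2$, set $X=\{x\mid\tau_G(x)\cap\zeta\ne\emptyset\}$ as in the proof of Proposition~\ref{smallcirc}; then $\zeta\subseteq\tau_G(X)$ and $|X|=q\le\frac n2\le|V(G)-X|$. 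One must produce a transverse circuit inside $\tau_G(V(G)-X)$; the natural candidate is to use that $c_G(X)=c_G(V(G)-X)<q=|X|\le|V(G)-X|$, so $\tau_G(V(G)-X)$ is a vertical $k$-separation of $M$ and in particular a dependent set whose rank $|V(G)-X|+c_G(X)$ is strictly less than $|V(G)-X|+\min\{|X|,|V(G)-X|\}$ --- wait, more carefully: a dependent transversal-spanned set contains a transverse circuit provided one can choose the dependency to lie within a single transversal, which follows because $\tau_G(V(G)-X)$ restricted to any transversal still has a nontrivial linear dependence among its $|V(G)-X|$ columns whenever $c_G(V(G)-X)\geq 1$; and if $c_G(X)=0$ then $G$ is disconnected and the argument is even easier. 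The main obstacle I anticipate is this last point --- verifying that a transverse circuit (not merely a circuit of $M$) actually lies inside $\tau_G(V(G)-X)$ --- which requires a careful look at which columns of $A(G)[X,V(G)-X]$ participate in a rank-deficiency, ensuring they can be packaged into a single transversal of the triples in $V(G)-X$. Everything else is bookkeeping with the cut-rank identities of Lemma~\ref{lem:trans_cut_rank} and Corollary~\ref{vconn}.
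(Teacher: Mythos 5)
Your skeleton is essentially the paper's: conditions 1 and 2 are identified via the correspondence between transverse circuits and vertex neighborhoods under local equivalence from \cite{BT1}, $2\Rightarrow 4$ is Proposition~\ref{smallcirc}, and $3\Rightarrow 2$ is the counting argument (a transverse circuit inside $\tau_G(X)$ meets each vertex triple at most once, so has size at most $|X|$, and the smaller of $|X|,|V(G)-X|$ is at most $n/2<\frac{n+1}{2}$). The problem is the step you yourself flag as the ``main obstacle'': producing a transverse circuit (not merely a circuit of $M$) inside $\tau_G(Y)$ whenever $c_G(Y)<|Y|$. You never supply this, and the one justification you offer --- that ``$\tau_G(V(G)-X)$ restricted to any transversal still has a nontrivial linear dependence'' --- is false as stated: the restriction of the all-$\phi$ transversal to $\tau_G(V(G)-X)$ is independent no matter what $c_G$ is. Without this lemma you also have no implication out of condition 4: your ``cleaner route'' defers $4\Rightarrow 1$ to ``$4\Rightarrow 3\Rightarrow 2$,'' but $4\Rightarrow 3$ is nowhere argued, and your sketched direct $4\Rightarrow 1$ (``a suitable local complementation concentrating the cut-structure'') is not a proof. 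So the cycle does not close.

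The missing idea is small but essential, and it is exactly how the paper proves $4\Rightarrow 3$: by Corollary~\ref{vconn} take $X$ with $c_G(X)<\min\{|X|,|V(G)-X|\}$; since the columns of $A[V(G)-X,X]$ are then dependent, choose $X_0\subseteq X$ whose columns there sum to $0$, and let $\Sigma$ be the sum of the $IAS(G)$-columns $\chi_G(x)$, $x\in X_0$. Then $\Sigma$ vanishes on every coordinate indexed by $V(G)-X$, and its residual $1$-entries, all indexed by $X$, are cancelled \emph{within the triples of} $X$: for $x\in X-X_0$ with $\Sigma_x=1$ adjoin $\phi_G(x)$, and for $x\in X_0$ with $\Sigma_x=1$ replace $\chi_G(x)$ by $\psi_G(x)=\chi_G(x)+\phi_G(x)$. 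This yields a nonempty dependent subtransversal contained in $\tau_G(X)$, hence a transverse circuit there; interchanging $X$ and $V(G)-X$ gives the circuit on the other side, which is simultaneously the transverse circuit in the complement that your $2\Rightarrow 3$ needs and the full implication $4\Rightarrow 3$. The freedom to move within each vertex triple (using $\phi_G(v)+\chi_G(v)+\psi_G(v)=0$) is precisely what your ``packaged into a single transversal'' remark requires and what your argument lacks.
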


\begin{proof}
The equivalence of conditions 1 and 2 follows from this obvious consequence of the theory developed in \cite{BT1}:

\begin{proposition}
Let $G$ be a graph, and let $q$ be the size of the smallest transverse circuit(s) in $M=M[IAS(G)]$. Then $q-1$ is the smallest degree of any vertex in any graph locally equivalent to $G$.
\end{proposition}

Proposition \ref{smallcirc} tells us that condition 2 implies condition 4, and condition 3 immediately implies condition 2, so it suffices to show that condition 4 implies condition 3. Recall that Corollary \ref{vconn} tells us that if $\kappa(M)<n$ then there is a subset $X\subseteq V(G)$ such that $c_{G}(X)<\min\{\left\vert X\right\vert ,\left\vert V(G)-X\right\vert \}$. Then the columns of the matrix $A[V(G)-X,X]$ are linearly dependent, so there is some subset $X_{0}\subseteq X$ such that the columns of $A[V(G)-X,X]$ corresponding to the elements of $X_{0}$ sum to $0$. Equivalently, if $S=\{\chi_{G}(x)\mid x\in X_{0}\}$ then the sum of the columns of $IAS(G)$ corresponding to elements of $S$ is a column vector $\Sigma$ whose $v$ coordinate is 0\ for every $v\in V(G)-X$. Let $S^{\prime}\subseteq\tau_{G}(X)$ be the set obtained from $S$ as follows. First: for every $x\in X-X_{0}$ such that the $x$ coordinate of $\Sigma$ is 1, insert $\phi_{G}(X)$. Second: for every $x\in X_{0}$ such that the $x$ coordinate of $\Sigma$ is 1, remove $\chi_{G}(x)$ and replace it with $\psi_{G}(X)$. The effect of these changes is to add 1 to every nonzero coordinate of $\Sigma$, so the sum of the columns of $IAS(G)$ corresponding to elements of $S^{\prime}$ is $0$. As $S^{\prime}$ is a subtransversal contained in $\tau_{G}(X)$, we conclude that $\tau_{G}(X)$ contains a transverse circuit. Interchanging $X$ and $V(G)-X$, the same argument tells us that $\tau_{G}(V(G)-X)$ also contains a transverse circuit.
\end{proof}

\begin{corollary}
If $\kappa(M[IAS(G)])=n$ then the diameter of $G$ is $\leq2$.
\end{corollary}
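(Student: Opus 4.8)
The plan is to derive this from the expanded form of Theorem~\ref{degree} (i.e., Theorem~\ref{expdegree}), which tells us that $\kappa(M[IAS(G)])=n$ exactly when \emph{no} graph locally equivalent to $G$ has a vertex of degree $<\frac{n-1}{2}$. So I want to show: if every graph locally equivalent to $G$ has minimum degree $\geq\frac{n-1}{2}$, then in particular $G$ itself has all degrees $\geq\frac{n-1}{2}$, and this forces $\operatorname{diam}(G)\leq 2$. Actually I only need the hypothesis for $G$ itself, not the whole local equivalence class.

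The core of the argument is a routine counting fact: if a graph $G$ on $n$ vertices has minimum degree $\delta\geq\frac{n-1}{2}$, then any two non-adjacent vertices $u,v$ have a common neighbor. Indeed, $N_G(u)$ and $N_G(v)$ are subsets of $V(G)\setminus\{u,v\}$, which has $n-2$ elements, while $|N_G(u)|+|N_G(v)|\geq 2\cdot\frac{n-1}{2}=n-1>n-2$, so $N_G(u)\cap N_G(v)\neq\emptyset$ by inclusion-exclusion. Hence every pair of vertices is at distance $\leq 2$, i.e., $\operatorname{diam}(G)\leq 2$. (One should note the hypothesis $n\geq 4$ from Theorem~\ref{expdegree} is in force here; for $n\leq 3$ the statement ``$\kappa(M[IAS(G)])=n$'' together with the diameter claim can be checked directly from case~1 of Theorem~\ref{vconnect}, or one can simply observe that a connected graph on $\leq 3$ vertices trivially has diameter $\leq 2$, and a disconnected graph has $\kappa=1\neq n$ when $n\geq 2$, while $n\leq 1$ is vacuous or degenerate.)

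So the steps, in order, are: first invoke Theorem~\ref{expdegree} (valid since we may assume $n\geq4$) to conclude that $\kappa(M[IAS(G)])=n$ implies condition~1 fails, hence $G$ itself (taking the identity local equivalence) has no vertex of degree $<\frac{n-1}{2}$; second, apply the inclusion-exclusion counting argument above to deduce that any two non-adjacent vertices share a neighbor; third, conclude $\operatorname{diam}(G)\leq 2$. I would also remark that $G$ is connected in this situation (from case~2 of Theorem~\ref{vconnect}, $\kappa(M[IAS(G)])=1$ when $G$ is disconnected, which cannot equal $n$ for $n\geq 2$), so ``diameter'' is well-defined.

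I do not anticipate a genuine obstacle here: the only thing to be careful about is the boundary parity issue, namely whether $\frac{n-1}{2}$ is an integer. If $n$ is odd, $\delta\geq\frac{n-1}{2}$ means $\delta\geq\frac{n-1}{2}$ exactly; if $n$ is even, it means $\delta\geq\frac{n}{2}$. In both cases the bound $|N_G(u)|+|N_G(v)|\geq n-1>n-2$ holds (when $n$ even it is even stronger, $\geq n > n-2$), so the common-neighbor conclusion goes through uniformly. The ``hard part,'' such as it is, was already done in Theorem~\ref{expdegree}; this corollary is just the easy contrapositive plus a one-line pigeonhole estimate.
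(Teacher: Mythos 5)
Your proof is correct, but it reaches the conclusion by a different route through Theorem~\ref{expdegree} than the paper does. The paper argues by contrapositive using condition~3 of that theorem: if $\operatorname{diam}(G)>2$, there are vertices $v,w$ whose closed neighborhoods $X=N_G(v)\cup\{v\}$ and $Y=N_G(w)\cup\{w\}$ are disjoint, and then $\tau_G(X)$ and $\tau_G(V(G)-X)\supseteq\tau_G(Y)$ contain the neighborhood circuits $\zeta_G(v)$ and $\zeta_G(w)$, so condition~3 holds and $\kappa(M[IAS(G)])<n$; note that distance $\geq 3$ already forces $n\geq 4$, so the paper never needs to discuss small $n$ separately. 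You instead use condition~1: $\kappa(M[IAS(G)])=n$ forces minimum degree $\geq\frac{n-1}{2}$ in $G$ itself, and then a Dirac-style pigeonhole argument ($|N_G(u)|+|N_G(v)|\geq n-1>n-2$ for non-adjacent $u,v$) produces a common neighbor, giving diameter $\leq 2$ (and connectedness for free). Both arguments rely only on the easy implications of Theorem~\ref{expdegree} — yours on $1\Rightarrow 2\Rightarrow 4$ (the BT1-based degree/transverse-circuit proposition plus Proposition~\ref{smallcirc}), the paper's on $3\Rightarrow 2\Rightarrow 4$ — so neither needs the hard direction $4\Rightarrow 3$. The paper's version is slightly more economical in that it works directly with neighborhood circuits and avoids degrees and the parity discussion entirely; yours is more graph-theoretically self-contained once the degree characterization is granted, and your explicit treatment of the $n\leq 3$ and disconnected cases is careful, if strictly speaking unnecessary.
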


\begin{proof}
Suppose instead that the diameter of $G$ is greater than 2. Then there are
vertices $v\neq w\in V(G)$ such that $X=N_{G}(v)\cup\{v\}$ and $Y=N_{G}%
(w)\cup\{w\}$ have $X\cap Y=\emptyset$. But $\tau_{G}(X)$ contains the neighborhood
circuit $\zeta_{G}(v)$, and $\tau_{G}(Y)$ contains $\zeta_{G}(w)$, so condition 3 of
Theorem \ref{expdegree} is satisfied.
\end{proof}

The converse of the corollary is false as there are large graphs of diameter $\leq2$, but according to Corollary \ref{lowdeg} there is no graph with $n\geq7$ and $\kappa(M[IAS(G)])=n$.

\section{Theorem \ref{vconnect} and Corollary \ref{lowdeg}}
\label{sec:char_max_connect}

In this section we prove Theorem \ref{vconnect} by proving the following five statements for $M = M[IAS(G)]$. These statements differ slightly from those of Theorem \ref{vconnect}, as they have not been restricted so as to be mutually exclusive.
\begin{enumerate}

\item If $n\leq3$ and $G$ is connected, then $\kappa(M)=n$.

\item If $G$ is disconnected, then $\kappa(M)=1$.

\item If $G$ is connected and not prime, then $\kappa(M)=3$.

\item If $G$ is prime and $\kappa(M)<n$, then $\kappa(M)$ is odd and $\kappa(M) \geq 5$.

\item If $n\geq 4$ and $\kappa(M)=n$, then $G$ is locally equivalent to $C_{5}$ or $W_{5}$.
\end{enumerate}

Recall that the cut-rank function of $G$ is given by $c_G(X) = r(A[X,V-X])$ for $X\subseteq V(G)$, and it determines $\kappa(M)$ as in Corollary~\ref{vconn}. Statements 1, 2, 3 and 4 above follow from three obvious properties of the cut-rank function, which were observed by Bouchet \cite{Bconn}. For the reader's convenience we recall these obvious properties now.

Property 1. If $n\leq3$ and $G$ is connected, then every $X\subseteq V(G)$ has $c_G(X)=\min\{\left\vert X\right\vert ,\left\vert V(G)-X\right\vert \}$.

Property 2. $G$ is disconnected iff some $X \subseteq V(G)$ has $\emptyset \neq X \neq V(G)$ and $c_G(X)=0$. Equivalently, $G$ is connected iff $c_G(X) \geq 1$ for every $X \subseteq V(G)$ with $\emptyset \neq X \neq V(G)$.

Property 3. A partition $(V_1,V_2)$ of $V(G)$ provides a split of $G$ iff $\left\vert V_1\right\vert ,\left\vert V_2\right\vert \geq 2$ and $c_{G}(V_{1}) =c_{G}(V_{2}) \leq 1$.

The proof of statement 5 is considerably more difficult. We actually prove Corollary \ref{lowdeg} first, and then explain how to deduce statement 5 from the ``corollary.''

\subsection{Corollary \ref{lowdeg}}

\begin{definition}
\label{subtran}
Let $G$ be a graph, and let $\gamma$ be a subset of $W(G)$. Then
the sum (i.e., the symmetric difference)%
\[
S(\gamma)=\gamma+\sum_{\left\vert \gamma\cap\tau_{G}(v)\right\vert >1}\tau
_{G}(v)\text{.}%
\]
is the \emph{subtransversal associated to} $\gamma$.
\end{definition}

That is, $S(\gamma)$ is obtained from $\gamma$ in two steps: for each $v\in
V(G)$ with $\tau_{G}(v)\subseteq\gamma$, remove $\tau_{G}(v)$ from $\gamma$;
and for each $v\in V(G)$ with $\left\vert \gamma\cap\tau_{G}(v)\right\vert
=2$, replace the two elements of $\gamma\cap\tau_{G}(v)$ with the third
element of $\tau_{G}(v)$.

For ease of reference we state the following observation as a lemma.

\begin{lemma}
\label{dominate}Suppose $\gamma$ is an element of the cycle space of
$M[IAS(G)]$, i.e., $\gamma$ is a subset of $W(G)$ and the columns of $IAS(G)$
corresponding to $\gamma$ sum to $0$. If $\gamma$ is a union of vertex triples
then $S(\gamma)=\emptyset$; otherwise $S(\gamma)\neq\emptyset$ and
$S(\gamma)$\ is the disjoint union of some transverse circuits of $M[IAS(G)].$
\end{lemma}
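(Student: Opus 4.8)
The plan is to reason entirely at the level of $GF(2)$ linear algebra on the columns of $IAS(G)$, exploiting the very special structure of a vertex triple: the three columns $\phi_G(v),\chi_G(v),\psi_G(v)$ sum to $0$, so any two of them sum to the third. First I would handle the trivial case: if $\gamma$ is a union of vertex triples, then every $v$ with $\gamma\cap\tau_G(v)\neq\emptyset$ has $|\gamma\cap\tau_G(v)|=3>1$, so the sum in Definition~\ref{subtran} subtracts off exactly the triples comprising $\gamma$, giving $S(\gamma)=\emptyset$. Conversely, suppose $\gamma$ is \emph{not} a union of vertex triples. Since $\gamma$ is in the cycle space, the columns of $\gamma$ sum to $0$; the columns of each triple $\tau_G(v)$ with $|\gamma\cap\tau_G(v)|>1$ also sum to $0$ (the whole triple sums to $0$, and if $|\gamma\cap\tau_G(v)|=2$ then the two columns in $\gamma$ sum to the third, which is again the full-triple relation rearranged). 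Hence $S(\gamma)$, being a symmetric difference of $\gamma$ with such triples, is again an element of the cycle space of $M[IAS(G)]$.

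Next I would check that $S(\gamma)$ is a subtransversal, i.e.\ meets every $\tau_G(v)$ at most once. Fix $v$. If $|\gamma\cap\tau_G(v)|=0$, the triple of $v$ is untouched and $S(\gamma)\cap\tau_G(v)=\emptyset$. If $|\gamma\cap\tau_G(v)|=1$, the triple is not among those summed in, so $S(\gamma)\cap\tau_G(v)=\gamma\cap\tau_G(v)$ has one element. If $|\gamma\cap\tau_G(v)|=2$, we add $\tau_G(v)$: the two elements already present cancel and the one missing element is inserted, leaving exactly one. If $|\gamma\cap\tau_G(v)|=3$, we add $\tau_G(v)$: all three cancel, leaving none. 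So in every case $|S(\gamma)\cap\tau_G(v)|\le 1$, and $S(\gamma)$ is a subtransversal. Moreover $S(\gamma)$ is nonempty: if it were empty, then running the construction backward ($\gamma$ is $S(\gamma)$ plus a union of triples) would force $\gamma$ to be a union of vertex triples, contradicting our case assumption — more carefully, $S(\gamma)=\emptyset$ means $\gamma=\sum_{|\gamma\cap\tau_G(v)|>1}\tau_G(v)$, a union of full triples, which is exactly what we excluded.

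Finally I would argue that a nonempty subtransversal lying in the cycle space is a disjoint union of transverse circuits. Extend $S(\gamma)$ to a transversal $T$ of $W(G)$ (choose one element of each triple not already met). Then $S(\gamma)$ is a dependent subset of the transverse matroid $M|T$, and it lies in the cycle space of that matroid (the column relation is inherited). In any matroid, a nonempty member of the cycle space is a disjoint union of circuits — this is the standard fact that the cycle space is spanned by circuits and any dependent ``cycle'' decomposes into circuits with disjoint supports (equivalently, a minimal nonempty element of the cycle space is a circuit, and one peels off circuits one at a time). Since circuits of $M|T$ are by definition transverse circuits of $G$, this gives the desired decomposition of $S(\gamma)$ into disjoint transverse circuits.

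The main obstacle is the last step: one must be careful that "element of the cycle space" decomposes into a \emph{disjoint} union of circuits rather than merely being a symmetric difference of (possibly overlapping) circuits. Over $GF(2)$ these notions coincide for a single cycle-space element supported on a set with no repeats — the support of a nonzero cycle-space vector, being a dependent set, contains a circuit, and removing that circuit's support leaves a smaller cycle-space vector, so induction on $|S(\gamma)|$ closes the argument — but spelling this out cleanly (and noting it applies inside the transverse matroid $M|T$, so "circuit" really does mean "transverse circuit") is the one place care is needed.
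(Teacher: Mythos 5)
Your proof is correct and follows essentially the same route as the paper's: the whole argument rests on the observation that adding full vertex triples (each of which has column sum $0$) does not change the column sum, so $S(\gamma)$ is a subtransversal in the cycle space, which then decomposes into disjoint transverse circuits by the standard binary-matroid fact. The paper states only the column-sum identity and leaves the rest implicit; you have simply filled in the same details (the case analysis on $\left\vert \gamma\cap\tau_{G}(v)\right\vert$ and the peeling-off-circuits induction inside a transverse matroid) explicitly.
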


\begin{proof}
The sum of the columns of $IAS(G)$ corresponding to elements of $\gamma$
equals the sum of columns corresponding to elements of $S(\gamma)$.
\end{proof}

\begin{lemma}
\label{tranrank}Let $G$ be a graph. Then the following properties of a subset $X\subseteq V(G)$ are equivalent.

1. The rank of $\tau_{G}(X)$ in $M[IAS(G)]$ is $\geq2\cdot\left\vert
X\right\vert $.

2. The rank of $\tau_{G}(X)$ in $M[IAS(G)]$ equals $2\cdot\left\vert
X\right\vert $.

3. No transverse circuit of $M[IAS(G)]$ is contained in $\tau_{G}(X)$.
\end{lemma}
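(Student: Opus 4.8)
The plan is to reduce everything to Lemma~\ref{lem:trans_cut_rank}, which gives $r(\tau_{G}(X))=\left\vert X\right\vert +c_{G}(X)$. Since $c_{G}(X)=r(A[V(G)-X,X])$ is the rank of a matrix with $\left\vert X\right\vert$ columns, we always have $c_{G}(X)\leq\left\vert X\right\vert$, hence $r(\tau_{G}(X))\leq 2\left\vert X\right\vert$ for every $X$. This yields the equivalence of conditions 1 and 2 at once: condition 1 says $r(\tau_{G}(X))\geq 2\left\vert X\right\vert$, which together with the universal upper bound forces $r(\tau_{G}(X))=2\left\vert X\right\vert$. It therefore remains to show that condition 2 (equivalently $c_{G}(X)=\left\vert X\right\vert$) is equivalent to condition 3, and I would do this by establishing the negated equivalence: $c_{G}(X)<\left\vert X\right\vert$ if and only if $\tau_{G}(X)$ contains a transverse circuit of $M=M[IAS(G)]$.

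For the implication ``$c_{G}(X)<\left\vert X\right\vert$ $\Rightarrow$ $\tau_{G}(X)$ contains a transverse circuit'' I would reuse the construction already carried out in the proof of Theorem~\ref{expdegree} (the argument there that condition 4 implies condition 3). In brief: $c_{G}(X)<\left\vert X\right\vert$ means the columns of $A[V(G)-X,X]$ are linearly dependent, so some nonempty $X_{0}\subseteq X$ has the property that its columns sum to $0$; then $S=\{\chi_{G}(x)\mid x\in X_{0}\}$ is a subtransversal contained in $\tau_{G}(X)$ whose column sum $\Sigma$ vanishes on the $V(G)-X$ coordinates, and one repairs $\Sigma$ on the $X$ coordinates inside $\tau_{G}(X)$ (insert $\phi_{G}(x)$ when $\Sigma_{x}=1$ and $x\notin X_{0}$; replace $\chi_{G}(x)$ by $\psi_{G}(x)$ when $\Sigma_{x}=1$ and $x\in X_{0}$). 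The resulting set $S'$ is a nonempty subtransversal contained in $\tau_{G}(X)$ whose columns sum to $0$; extending $S'$ to a full transversal $T$, the set $S'$ is a nonempty dependent set in the transverse matroid $M\vert T$ and hence contains a transverse circuit, which lies in $\tau_{G}(X)$.

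For the converse, suppose $\zeta\subseteq\tau_{G}(X)$ is a transverse circuit; then the columns of $IAS(G)$ indexed by $\zeta$ sum to $0$. Restricting this relation to the rows indexed by $V(G)-X$, the $\phi_{G}(x)$ columns contribute $0$ (they are columns of the identity block), while both the $\chi_{G}(x)$ and the $\psi_{G}(x)$ columns restrict to the $x$-th column of $A[V(G)-X,X]$. Hence, writing $X_{0}=\{x\in X\mid \zeta\cap\{\chi_{G}(x),\psi_{G}(x)\}\neq\emptyset\}$, the $X_{0}$-columns of $A[V(G)-X,X]$ sum to $0$, so $c_{G}(X)=r(A[V(G)-X,X])<\left\vert X\right\vert$ as soon as $X_{0}\neq\emptyset$. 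The one point that needs care — and essentially the only subtlety in the whole lemma — is exactly this nonemptiness: if $X_{0}=\emptyset$ then $\zeta$ would consist solely of $\phi$-elements, i.e.\ of distinct columns of the identity matrix $I$, and no nonempty set of such columns can sum to $0$, a contradiction. Thus $X_{0}\neq\emptyset$, which completes the equivalence and hence the proof of the lemma. I do not anticipate a genuine obstacle here: the nontrivial construction is already available from Section~\ref{degreeproof}, and the remaining work is bookkeeping with Lemma~\ref{lem:trans_cut_rank} and the identity block of $IAS(G)$.
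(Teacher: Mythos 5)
Your proof is correct, but it follows a different route from the one in the paper. You reduce everything to the cut-rank identity of Lemma~\ref{lem:trans_cut_rank} together with the trivial bound $c_G(X)\leq|X|$ (which immediately gives $1\Leftrightarrow 2$), and then you prove ``$c_G(X)<|X|$ iff $\tau_G(X)$ contains a transverse circuit'' by explicit $GF(2)$ column manipulations: in one direction the repair construction (adding $\phi_G(x)$ or trading $\chi_G(x)$ for $\psi_G(x)$ to kill the residual coordinates), which is essentially the argument used in the paper's proof of Theorem~\ref{expdegree}, and in the other direction restriction of the circuit relation to the rows of $V(G)-X$, with the correct observation that the circuit cannot consist solely of $\phi$-elements since identity columns are independent. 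There is no circularity in borrowing the Theorem~\ref{expdegree} construction, since that theorem's proof does not rely on Lemma~\ref{tranrank}. The paper instead argues purely matroid-theoretically: it gets $1\Leftrightarrow 2$ from the fact that the $|X|$ vertex triples are pairwise disjoint dependent sets, and for $2\Leftrightarrow 3$ it writes $\tau_G(X)$ as a union of three disjoint subtransversals $S_1\cup S_2\cup S_3$, notes $S_3$ lies in the closure of $S_1\cup S_2$, and uses Lemma~\ref{dominate} to convert an arbitrary circuit inside $S_1\cup S_2$ into a transverse circuit inside $\tau_G(X)$. Your version buys a concrete, matrix-level picture tied to cut-rank (and incidentally re-proves the link $c_G(X)<|X|$ iff $\tau_G(X)$ contains a transverse circuit), while the paper's version stays within the subtransversal/Lemma~\ref{dominate} formalism that it reuses heavily in the proof of Theorem~\ref{halfcirc}.
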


\begin{proof}
As $\tau_{G}(X)$ contains the
vertex triples $\tau_{G}(x)$ with $x\in X$, and these vertex triples are
pairwise disjoint dependent sets of $M[IAS(G)]$, $r(\tau_{G}(X))\leq\left\vert
\tau_{G}(X)\right\vert -\left\vert X\right\vert =2\cdot\left\vert X\right\vert
$. This explains the equivalence between properties 1 and 2.

Suppose property 3 fails, i.e., $\tau_{G}(X)$ contains a transverse circuit
$\zeta$. Let $\tau_{G}(X)=S_{1}\cup S_{2}\cup S_{3}$, where the $S_{i}$ are
pairwise disjoint subtransversals of $W(G)$ and $\zeta\subseteq S_{1}$. As
vertex triples are dependent in $M[IAS(G)]$, $S_{3}$ is contained in the
closure of $S_{1}\cup S_{2}$. It follows that%
\[
r(\tau_{G}(X))=r(S_{1}\cup S_{2})\leq r(S_{1})+r(S_{2})\leq-1+\left\vert
S_{1}\right\vert +\left\vert S_{2}\right\vert \leq-1+2\cdot\left\vert
X\right\vert ,
\]
so property 1 also fails.

Suppose now that property 1 fails. Let $\tau_{G}(X)=S_{1}\cup S_{2}\cup S_{3}%
$, where the $S_{i}$ are pairwise disjoint subtransversals of $W(G)$. Again,
the fact that vertex triples are dependent in $M[IAS(G)]$ implies that
$r(\tau_{G}(X))=r(S_{1}\cup S_{2})<2\cdot\left\vert X\right\vert =\left\vert
S_{1}\cup S_{2}\right\vert $. Consequently, $S_{1}\cup S_{2}$ contains some circuit
$\gamma$ of $M[IAS(G)]$. Lemma \ref{dominate} tells us that there is a
transverse circuit $\zeta$ with $\zeta\subseteq S(\gamma)$.
\end{proof}

Here is an easy consequence of Lemma \ref{tranrank}.

\begin{corollary}
\label{tranrankcor}Let $G$ be a graph with $n$ vertices, and let
$X\subseteq V(G)$ be a subset with $\left\vert X\right\vert >\frac{n}{2}$.
Then $\tau_{G}(X)$ contains a transverse circuit.
\end{corollary}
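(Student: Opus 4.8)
The plan is to reduce immediately to the rank computation of Lemma~\ref{lem:trans_cut_rank} together with the equivalence in Lemma~\ref{tranrank}. By Lemma~\ref{tranrank}, the set $\tau_{G}(X)$ contains a transverse circuit of $M[IAS(G)]$ if and only if property~1 of that lemma fails, i.e.\ if and only if $r(\tau_{G}(X))<2\cdot\left\vert X\right\vert$. So it suffices to bound $r(\tau_{G}(X))$ from above.

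First I would invoke Lemma~\ref{lem:trans_cut_rank}, which gives $r(\tau_{G}(X))=\left\vert X\right\vert+c_{G}(X)$. Next I would use the standard inequality for the cut-rank recorded just before Lemma~\ref{lem:trans_cut_rank}, namely $c_{G}(X)=c_{G}(V(G)-X)\leq\min\{\left\vert X\right\vert,\left\vert V(G)-X\right\vert\}$. Since the hypothesis $\left\vert X\right\vert>\frac{n}{2}$ forces $\left\vert V(G)-X\right\vert=n-\left\vert X\right\vert<\frac{n}{2}<\left\vert X\right\vert$, we get $c_{G}(X)\leq\left\vert V(G)-X\right\vert<\left\vert X\right\vert$. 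Combining, $r(\tau_{G}(X))=\left\vert X\right\vert+c_{G}(X)<2\cdot\left\vert X\right\vert$, so property~1 of Lemma~\ref{tranrank} fails; hence property~3 fails, i.e.\ $\tau_{G}(X)$ contains a transverse circuit.

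There is no real obstacle here — the statement is a short bookkeeping consequence of the two preceding lemmas and the elementary bound $c_{G}(X)\leq\min\{\left\vert X\right\vert,\left\vert V(G)-X\right\vert\}$. The only point worth stating carefully is the strict inequality $c_{G}(X)<\left\vert X\right\vert$, which is where the hypothesis $\left\vert X\right\vert>\frac{n}{2}$ (as opposed to $\geq$) is used, and which is exactly what makes property~1 of Lemma~\ref{tranrank} fail rather than hold with equality.
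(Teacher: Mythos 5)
Your proposal is correct and is essentially the paper's argument: both reduce to showing $r(\tau_{G}(X))<2\left\vert X\right\vert$ and then invoke Lemma~\ref{tranrank}. The only (cosmetic) difference is that the paper gets the rank bound immediately from $r(\tau_{G}(X))\leq r(M[IAS(G)])=n<2\left\vert X\right\vert$, whereas you route through Lemma~\ref{lem:trans_cut_rank} and the bound $c_{G}(X)\leq\left\vert V(G)-X\right\vert$, which combine to give the same inequality.
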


\begin{proof}
As $2\cdot\left\vert X\right\vert >n$ and $n=r(W(G))$, $2\cdot\left\vert
X\right\vert >r(\tau_{G}(X))$.
\end{proof}

In general, if $\left\vert X\right\vert \leq\frac{n}{2}$ then $\tau_{G}(X)$
need not contain a transverse circuit. However, in Theorem \ref{halfcirc} we
prove that if $n\geq7$ then $M[IAS(G)]$ has some transverse circuit of size
$\leq\frac{n}{2}$. A couple of preliminary results will be useful.

\begin{lemma}
\label{halfclem}Let $G$ be a graph, and let $q$ be the size of
the smallest transverse circuit(s) in $M[IAS(G)]$. Suppose $Q\subseteq V(G)$
and $\left\vert Q\right\vert =q$. Then one of the following holds:

\begin{enumerate}
\item There is no transverse circuit contained in $\tau_{G}(Q)$.

\item There is precisely one transverse circuit contained in $\tau_{G}(Q)$.

\item There are precisely three transverse circuits contained in $\tau_{G}%
(Q)$, and they partition $\tau_{G}(Q)$.
\end{enumerate}
\end{lemma}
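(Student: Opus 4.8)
The plan is to analyze the set $\gamma \subseteq \tau_G(Q)$ consisting of those elements that lie on \emph{some} transverse circuit contained in $\tau_G(Q)$, together with the combinatorial constraints forced by $q$ being minimal. First I would invoke Lemma~\ref{tranrank}: if $\tau_G(Q)$ contains any transverse circuit at all, then $r(\tau_G(Q)) < 2q$, equivalently $r(\tau_G(Q)) \le 2q-1$. Each transverse circuit contained in $\tau_G(Q)$ is a circuit of a transverse matroid on a $q$-element transversal of $\tau_G(Q)$; since the smallest transverse circuit has size $q$, any such circuit must be a \emph{spanning} circuit of that transverse matroid, hence must use one element from every vertex triple $\tau_G(x)$, $x \in Q$. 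So every transverse circuit contained in $\tau_G(Q)$ meets each $\tau_G(x)$ in exactly one element.

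Next I would set up the counting. For each $x \in Q$, the three elements of $\tau_G(x)$ are $\phi_G(x), \chi_G(x), \psi_G(x)$, and a transverse circuit picks one of them; write the choice as a function $f\colon Q \to \{\phi,\chi,\psi\}$. The key linear-algebra fact is that $\zeta_f := \{f(x)_G(x) : x \in Q\}$ is a (transverse) circuit precisely when the corresponding columns of $IAS(G)$ sum to $0$ and no proper subset does — but since $q$ is minimal and $\zeta_f$ is already of the minimal size $q$, it suffices that the columns sum to $0$; minimality of $q$ rules out any proper nonempty dependent subset. Now, using the relation $\phi_G(x) + \chi_G(x) + \psi_G(x) = 0$ (the three columns of $I$, $A(G)$, $A(G)+I$ in the $x$ position sum to zero), the set of "zero-sum" choice functions $f$ forms (after fixing a base choice) an affine subspace of $(GF(2))^Q$ under the correspondence replacing one choice by another at a vertex $x$, which toggles the contribution by $\chi_G(x)+\phi_G(x)$ or similar. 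Concretely: if $f$ and $f'$ are two zero-sum choice functions differing at a single vertex $x$, their "difference" is $\phi_G(x)+\chi_G(x)$ or $\chi_G(x)+\psi_G(x)$ or $\phi_G(x)+\psi_G(x)$, each of which is a single column of $IAS(G)$; so two such differences at the same vertex $x$ already account for all three pairs, and one checks that the set of zero-sum $f$ is closed under "symmetric difference of the differences." This forces the number of zero-sum choice functions to be $0$, $1$, or a number that, restricted to each coordinate, behaves like a coset — and the size-$q$ minimality constraint (a zero-sum $f$ of size $q$ cannot contain a smaller zero-sum subset) prevents two zero-sum functions from agreeing on a proper subset of $Q$, which pins the count down to $0$, $1$, or exactly $3$, with the three in the latter case partitioning $\tau_G(Q)$ because they are pairwise "complementary" at every vertex (the three transverse circuits use, at each $x$, the three distinct elements $\phi_G(x), \chi_G(x), \psi_G(x)$).

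I would make the last step rigorous as follows. Suppose there are at least two transverse circuits $\zeta_1, \zeta_2 \subseteq \tau_G(Q)$. Let $D \subseteq Q$ be the set of vertices where $\zeta_1$ and $\zeta_2$ differ; $D \neq \emptyset$, and since both are spanning transverse circuits, $\zeta_1 + \zeta_2$ (symmetric difference, as column-sum-zero sets) is supported on $\bigcup_{x\in D}\tau_G(x)$ and has two elements in each $\tau_G(x)$, $x \in D$. Applying the operator $S(\cdot)$ of Definition~\ref{subtran} to $\zeta_1 + \zeta_2$: by Lemma~\ref{dominate}, $S(\zeta_1+\zeta_2)$ is either empty (forcing $\zeta_1+\zeta_2$ to be a union of vertex triples, i.e.\ $\zeta_1$ and $\zeta_2$ are "opposite" at every vertex of $D$ — then consider the third choice $\zeta_3$ at those vertices and show it is also a transverse circuit, giving the partition in case 3 once $D=Q$) or a disjoint union of transverse circuits each of size $\geq q$, but $|S(\zeta_1+\zeta_2)| \le |\zeta_1+\zeta_2| = 2|D| \le 2q$, so either $|D| = q$ and we are in the partition case, or $|D| < q$ and $S(\zeta_1+\zeta_2)$ is a transverse circuit of size $< 2q$ hence (being $\ge q$) the computation must be consistent only if it has exactly $q$ elements — and then a short case check on which elements of $\tau_G(x)$ survive shows $D = Q$ after all, or else produces a transverse circuit using fewer than $q$ vertex triples, contradicting minimality. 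The main obstacle I anticipate is exactly this bookkeeping: carefully tracking, vertex by vertex in $D$, how the three possible "difference patterns" ($\phi\chi$, $\chi\psi$, $\phi\psi$) interact under $S(\cdot)$ to rule out the possibility of two transverse circuits that differ on a \emph{proper} nonempty subset of $Q$. Once that is excluded, any two distinct transverse circuits in $\tau_G(Q)$ differ at \emph{every} vertex, and then the "third" choice at every vertex is forced to be a transverse circuit as well (its column sum is $\zeta_1 + \zeta_2 + (\text{sum of all vertex triples over }Q) = 0$), yielding exactly three, partitioning $\tau_G(Q)$.
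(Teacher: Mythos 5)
Your proposal is correct and, once the hand-wavy ``affine subspace'' paragraph is discarded in favor of your rigorous second half, it is essentially the paper's own proof: take two distinct transverse circuits $\zeta_1\neq\zeta_2$ in $\tau_G(Q)$ (each necessarily meeting every vertex triple of $Q$ exactly once), apply $S(\cdot)$ and Lemma~\ref{dominate} to $\zeta_1\Delta\zeta_2$, use minimality of $q$ to force $\zeta_1\cap\zeta_2=\emptyset$, and conclude that $S(\zeta_1\Delta\zeta_2)$ is the third transverse circuit, the three partitioning $\tau_G(Q)$. The ``bookkeeping obstacle'' you anticipate is not actually there: $S(\zeta_1\Delta\zeta_2)$ has exactly one element in each vertex triple where $\zeta_1$ and $\zeta_2$ differ and none elsewhere, so $\left\vert S(\zeta_1\Delta\zeta_2)\right\vert=\left\vert D\right\vert$ (not merely $\leq 2\left\vert D\right\vert$), whence $D\subsetneq Q$ immediately yields a transverse circuit of size $<q$, and the $S(\zeta_1\Delta\zeta_2)=\emptyset$ branch is vacuous because $\zeta_1\Delta\zeta_2$ has at most two elements per vertex triple and hence is never a union of vertex triples.
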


\begin{proof}
Suppose $\tau_{G}(Q)$ contains two transverse circuits $\zeta_{1}\neq\zeta
_{2}$. As $\zeta_{1}\Delta\zeta_{2}$ is not a union of vertex triples, Lemma
\ref{dominate} tells us that $S(\zeta_{1}\Delta\zeta_{2})$ contains a
transverse circuit, $\zeta_{3}$. If $\zeta_{1}\cap\zeta_{2}\not =\emptyset$
then $\left\vert S(\zeta_{1}\Delta\zeta_{2})\right\vert <q$, contradicting the
minimality of $q$. Consequently $\zeta_{1}\cap\zeta_{2}=\emptyset$ and
$S\left(  \zeta_{1}\Delta\zeta_{2}\right)  =\zeta_{3}.$
\end{proof}

\begin{corollary}
\label{halfclemcor}Let $G$ be a graph, and suppose $q\geq5$ is
the cardinality of the smallest transverse circuit(s) in $M[IAS(G)]$. Suppose
$Q\subseteq V(G)$, $\left\vert Q\right\vert =q$ and $\tau_{G}(Q)$ contains
three transverse circuits. Then for every $v\notin Q$, the only transverse
circuits contained in $\tau_{G}(Q\cup\{v\})$ are the ones contained in
$\tau_{G}(Q)$.
\end{corollary}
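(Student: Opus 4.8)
The plan is to proceed by contradiction: suppose some $v\notin Q$ gives a transverse circuit $\zeta\subseteq\tau_G(Q\cup\{v\})$ that is not contained in $\tau_G(Q)$. Then $\zeta$ must intersect $\tau_G(v)$, and since $\zeta$ is a transverse circuit it contains exactly one element of $\tau_G(v)$, say $a\in\tau_G(v)$; write $\zeta = \zeta' \cup \{a\}$ where $\zeta'$ is a subtransversal of $\tau_G(Q)$. The idea is to combine $\zeta$ with the three transverse circuits $\eta_1,\eta_2,\eta_3$ partitioning $\tau_G(Q)$ (guaranteed by hypothesis) to manufacture a transverse circuit of size strictly less than $q$, contradicting minimality of $q$.

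First I would observe, using Lemma~\ref{dominate}, that for each $i$, $S(\zeta \Delta \eta_i)$ is a disjoint union of transverse circuits (it is not a union of vertex triples, since $a\in\zeta\setminus\eta_i$ appears once). Note $\zeta\Delta\eta_i \subseteq \tau_G(Q\cup\{v\})$, and on the vertex $v$ the symmetric difference still contains just the single element $a$; on each vertex $w\in Q$, the set $\zeta\Delta\eta_i$ contains either $0$, $1$, or $2$ elements of $\tau_G(w)$, and $S$ reduces the $2$-element case to a single element and removes the $0$-element case. Counting cardinalities: $|\zeta|\geq q$, $|\eta_i| = q$, so $|\zeta\cap\eta_i|$ is the key quantity. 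Since $\eta_1,\eta_2,\eta_3$ partition $\tau_G(Q)$ which has $3q$ elements and $\zeta'=\zeta\cap\tau_G(Q)$ has $|\zeta|-1\geq q-1$ elements, an averaging argument shows $\zeta'$ meets some $\eta_j$ in at least $\lceil(q-1)/3\rceil$ elements — but I want an $\eta_j$ that $\zeta'$ meets in \emph{many} elements, ideally more than one, to force $|S(\zeta\Delta\eta_j)| < q$.

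The sharper point: because $\eta_1,\eta_2,\eta_3$ partition $\tau_G(Q)$, each vertex $w\in Q$ has its three triple-elements distributed one to each $\eta_i$. So for $w\in Q$, if $\zeta$ contains the element of $\tau_G(w)$ lying in $\eta_j$, then in $\zeta\Delta\eta_j$ that vertex contributes $0$ elements, whereas in $\zeta\Delta\eta_i$ ($i\neq j$) it contributes $2$, which $S$ collapses to $1$. Tracking this carefully, I would compute $|S(\zeta\Delta\eta_j)|$ exactly in terms of how $\zeta'$ distributes among the $\eta_i$'s, and show that for a suitable choice of $j$ it is at most $q-1$ (using $q\geq 5$ to rule out the degenerate small cases, and using that $\zeta\neq\eta_j$ so the symmetric difference is nonempty). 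This yields a transverse circuit (a component of $S(\zeta\Delta\eta_j)$, via Lemma~\ref{dominate}) of size $<q$, the desired contradiction; hence no such $\zeta$ exists and the only transverse circuits in $\tau_G(Q\cup\{v\})$ are those in $\tau_G(Q)$.

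The main obstacle I anticipate is the bookkeeping in the last step: making the cardinality count precise enough to guarantee $|S(\zeta\Delta\eta_j)|\leq q-1$ for \emph{some} $j$, rather than merely on average, and confirming that the hypothesis $q\geq 5$ is exactly what excludes the cases where $S(\zeta\Delta\eta_j)$ could coincide with $\eta_j$ or be empty. It may be cleanest to argue that if $\zeta'$ met all three $\eta_i$ in exactly the "wrong" pattern then $\zeta$ itself would be forced to equal one of the $\eta_i$ or to have size $\geq q+$ something incompatible with $\zeta\subseteq\tau_G(Q\cup\{v\})$ and $|\tau_G(v)\cap\zeta|=1$; a short case analysis on $|\zeta\cap\eta_i|$ for $i=1,2,3$ (whose values sum to $|\zeta'| = |\zeta|-1$) should close it.
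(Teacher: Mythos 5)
Your proposal is correct and follows essentially the same route as the paper: pigeonhole (using $q-1\geq 4$ and the fact that the three transverse circuits partition $\tau_G(Q)$) gives some circuit $\eta_j\subseteq\tau_G(Q)$ sharing at least two elements with $\zeta$, and then Lemma~\ref{dominate} applied to $\zeta\Delta\eta_j$ produces a transverse circuit of size $<q$. The bookkeeping you flag as an obstacle is immediate: $\zeta\Delta\eta_j$ meets $\tau_G(v)$ in exactly one element (so it is not a union of vertex triples and $S(\zeta\Delta\eta_j)\neq\emptyset$), and it misses the at least two vertex triples of $Q$ where $\zeta$ and $\eta_j$ agree, so $\left\vert S(\zeta\Delta\eta_j)\right\vert\leq 1+(q-2)=q-1<q$.
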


\begin{proof}
Suppose instead that $\tau_{G}(Q\cup\{v\})$ contains a transverse circuit
$\zeta$, which intersects $\tau_{G}(v)$. Then $\zeta$ also intersects at least
$q-1$ vertex triples in $\tau_{G}(Q)$; as $q-1\geq4$, $\zeta$ must share at
least two elements with a transverse circuit $\zeta^{\prime}$ contained in
$\tau_{G}(Q)$. Then Lemma \ref{dominate} tells us that $S(\zeta\Delta
\zeta^{\prime})$ contains a transverse circuit; but this is impossible as
$\left\vert S(\zeta\Delta\zeta^{\prime})\right\vert <q$.
\end{proof}

\begin{theorem}
\label{halfcirc}Let $G$ be a graph with $n\geq7$ vertices. Then
$M=M[IAS(G)]$ has a transverse circuit of size $\leq\frac{n}{2}$.
\end{theorem}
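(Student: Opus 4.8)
The plan is to argue by contradiction: suppose $q$, the size of the smallest transverse circuit of $M=M[IAS(G)]$, satisfies $q>\frac{n}{2}$, i.e. $q\geq\lceil\frac{n+1}{2}\rceil$, while $n\geq7$. I want to derive a contradiction by producing a transverse circuit that is too small. The main tool is Lemma \ref{tranrank}, which says that $\tau_G(X)$ contains a transverse circuit if and only if $r(\tau_G(X))<2|X|$; combined with Corollary \ref{tranrankcor}, any $X$ with $|X|>\frac{n}{2}$ automatically has a transverse circuit inside $\tau_G(X)$. The first thing I would do is dispose of easy cases using the results already set up. If $q<\frac{n+1}{2}$ we are done, so the hard case is $q=\lceil\frac{n+1}{2}\rceil$ exactly, or slightly larger. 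Since $q\leq n$ trivially (a neighborhood circuit has size at most $n$) and in fact Corollary \ref{tranrankcor} forces $q\leq\lceil\frac{n}{2}\rceil+1$ roughly, the window of possible values of $q$ is narrow — essentially $q\in\{\lceil\frac{n+1}{2}\rceil,\lceil\frac{n}{2}\rceil+1\}$ — and I would pin this down precisely at the start.

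Next I would exploit the structural dichotomy provided by Lemma \ref{halfclem} and Corollary \ref{halfclemcor}. Fix a $q$-element set $Q\subseteq V(G)$ for which $\tau_G(Q)$ contains a transverse circuit; by Lemma \ref{halfclem}, $\tau_G(Q)$ contains either exactly one transverse circuit or exactly three (which then partition $\tau_G(Q)$). The plan is to show that the ``exactly three'' case, propagated via Corollary \ref{halfclemcor} as $v$ ranges over $V(G)-Q$, forces very rigid combinatorial structure on $G$ — so rigid that, since $n\geq7$, $q$ is small enough to contradict minimality after all, or alternatively that $G$ is locally equivalent to one of a short list of exceptional graphs whose transverse-circuit sizes can be checked directly. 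In the ``exactly one'' case I would instead perturb $Q$ by swapping one vertex $x\in Q$ for a vertex $y\notin Q$: because $2q\geq n+1>n=r(W(G))$, the set $\tau_G(Q)$ has $r(\tau_G(Q))<2q$, so Lemma \ref{tranrank} already guarantees a circuit, and the goal is to show that among the $q$-subsets one can always find a circuit meeting strictly fewer than $q$ vertex triples (by a counting/exchange argument on which coordinates of the associated cycle-space vector vanish), contradicting minimality.

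The hard part will be the rigidity analysis of the ``three transverse circuits partitioning $\tau_G(Q)$'' configuration and showing it cannot persist once $n\geq7$. Concretely, if $\tau_G(Q)=\zeta_1\sqcup\zeta_2\sqcup\zeta_3$ with each $\zeta_i$ a transverse circuit meeting every vertex triple of $Q$ exactly once, then each vertex $x\in Q$ contributes its three elements $\phi_G(x),\chi_G(x),\psi_G(x)$ one to each $\zeta_i$ (in some order), and the cycle-space relation $\zeta_1+\zeta_2+\zeta_3=\sum_{x\in Q}\tau_G(x)$ must hold; translating this through the structure of $IAS(G)$ constrains the submatrix $A(G)[Q,Q]$ severely — roughly, it must have all row and column sums equal to $1$ over $GF(2)$, forcing $A(G)[Q,Q]+I$ to be a permutation matrix, so the induced subgraph $G[Q]$ is a disjoint union of edges plus possibly loops. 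Corollary \ref{halfclemcor} then says adding any outside vertex creates no new small circuit, which should let me push the argument to control how $V(G)-Q$ attaches to $Q$. I expect that carrying this through — and in particular ruling out the small sporadic cases or showing they have $n\leq6$ — is where the real work lies; I would organize it as a sequence of short claims about the parity of $q$, the structure of $G[Q]$, and the adjacencies between $Q$ and its complement, invoking Corollary \ref{vconn} and Proposition \ref{smallcirc} to translate back and forth between circuit sizes and the cut-rank function whenever that is cleaner.
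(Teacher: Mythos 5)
Your proposal sets up the right preliminary facts (the narrow window for $q$, Lemma \ref{tranrank}, Corollary \ref{tranrankcor}, the one-or-three dichotomy of Lemma \ref{halfclem}), but it stops exactly where the theorem actually becomes hard, and the plan it sketches for that part would not go through as stated. In the ``exactly one transverse circuit'' case you say the goal is to find, by ``a counting/exchange argument,'' a circuit meeting strictly fewer than $q$ vertex triples --- but that is precisely the statement to be proved, and no single vertex-swap argument is offered or is known to suffice. The paper's proof at this point requires three genuinely different arguments, all absent from your proposal: for $n$ even, a rank-overflow argument (an element $w\in\tau_G(v)$ missed by the unique circuit $\zeta_v$ would force $r(\tau_G(P)\cup\{w\})=1+2p=n+1>r(M)$); for odd $n\geq13$, a global counting argument using that the circuits $\zeta_v$, taken over \emph{all} $v\notin P$ simultaneously, pairwise meet in at most one element, so their traces on $\tau_G(P)$ occupy at least $p(p+1)/2$ of the $3p$ available elements; and for $n=11$, a long rigidity analysis (each element of $\tau_G(P)$ lies in exactly two of the six circuits, the elements $\kappa_6,\ldots,\kappa_{11}$ form a transverse circuit, a normal form is reached via local equivalence, fundamental circuits pin down $G$ up to an unknown $5$-vertex subgraph $H$, and a finite case check on $H$ produces a circuit of size $\leq5$). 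Moreover $n=7$ and $n=9$ are not reachable by any of these arguments and are verified by exhaustive computation; your proposal does not acknowledge that such small cases need separate treatment.

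The one concrete structural claim you do make is also unjustified: in the ``three circuits'' case, the relation $\zeta_1+\zeta_2+\zeta_3=\sum_{x\in Q}\tau_G(x)$ is vacuous, because the three circuits partition $\tau_G(Q)$ and every vertex triple is itself dependent, so both sides sum to zero automatically and no constraint on $A(G)[Q,Q]$ (let alone that $A(G)[Q,Q]+I$ is a permutation matrix) follows. In the paper this case is not analyzed structurally at all: it is eliminated at once by combining Corollary \ref{halfclemcor} with Corollary \ref{tranrankcor} --- if $\tau_G(P\cup\{v_1\})$ contained three transverse circuits, then $\tau_G(P\cup\{v_2\})$ could contain none, yet it must contain one since $|P\cup\{v_2\}|>\frac{n}{2}$ --- and even that step needs the hypothesis $q\geq5$ of Corollary \ref{halfclemcor}, which is another reason $n=7$ must be handled separately. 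As it stands, then, your text is a plausible opening but has a genuine gap: the core of the proof (the paper's Cases 1--3 plus the computational verification for $n=7,9$) is missing.
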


\begin{proof}
We have verified the theorem for $n=7$ and $n=9$ by exhaustion, using the matroid module \cite{sageMatroid} for SageMath \cite{sage} and the nauty package \cite{nauty}. For the rest of the proof, then, we assume that either $n=8$ or $n>9$.

Suppose the theorem fails for $G$, i.e., every transverse circuit of
$M$ is of size $>\frac{n}{2}$. For convenience we let $p=\left\lfloor
\frac{n}{2}\right\rfloor $, and we fix a subset $P\subseteq V(G)$ with
$\left\vert P\right\vert =p$. If $v\notin P$ then Corollary \ref{tranrankcor}
and Lemma \ref{halfclem} tell us that $\tau_{G}(P\cup\{v\})$ contains either
precisely one or precisely three transverse circuits. Each such transverse
circuit meets every vertex triple in $\tau_{G}(P\cup\{v\})$; and if there are three such transverse circuits, they constitute a partition of $\tau_{G}(P\cup\{v\})$.

Suppose there is a vertex $v_{1}\notin P$ such that $\tau_{G}(P\cup\{v_{1}\})$
contains three transverse circuits. If $v_{2}\notin P\cup\{v_{1}\}$ then
Corollary \ref{tranrankcor} tells us that $\tau_{G}(P\cup\{v_{2}\})$ contains
a transverse circuit. As $p\geq4$, however, Corollary \ref{halfclemcor} tells
that $\tau_{G}(P\cup\{v_{2}\})$ does not contain a transverse circuit. We
conclude by contradiction that there is no vertex $v_{1}\notin P$ such that
$\tau_{G}(P\cup\{v_{1}\})$ contains three transverse circuits. That is, for
each $v\notin P$ there is a unique transverse circuit $\zeta_{v}\subseteq
\tau_{G}(P\cup\{v\})$.

The rest of the proof is split into three cases.

Case 1: $n$ is even. If $v\notin P$ and $w\in\tau_{G}(v)$ is not included in
$\zeta_{v}$, then Lemma \ref{dominate} tells us that $w$ is not included in
any circuit $\gamma\subseteq\tau_{G}(P)\cup\{w\}$. That is, the rank of
$\tau_{G}(P)\cup\{w\}$ is $1+r(\tau_{G}(P))$. But $1+r(\tau_{G}(P))=1+2p=1+n$,
an impossibility as the rank of $M$ is only $n$.

Case 2: $n\geq13$ and odd. Let $V(G)=\{v_{1},\ldots,v_{2p+1}\}$, with $P=\{v_{1}%
,\ldots,v_{p}\}$, and for $i>p$ let $\zeta_{i}=\zeta_{v_{i}}$. Notice that if
$i,j>p$, $i\neq j$ and $\left\vert \zeta_{i}\cap\zeta_{j}\right\vert >1$ then
$\zeta_{i}\Delta\zeta_{j}$ meets no more than $p$ vertex triples, including
$\tau_{G}(v_{i})$ and $\tau_{G}(v_{j})$. Lemma \ref{dominate} then tells us
that $S(\zeta_{i}\Delta\zeta_{j})$ contains a transverse circuit of size $\leq
p$, a contradiction; we conclude that $\left\vert \zeta_{i}\cap\zeta
_{j}\right\vert \leq1$ whenever $i\neq j$. As each of $\zeta_{p+1}%
,\ldots,\zeta_{2p+1}$ includes $p$ elements of $\tau_{G}(P)$, the fact that
$\left\vert \zeta_{i}\cap\zeta_{j}\right\vert \leq1$ $\forall i\neq j$ implies
that
\[
\left\vert
{\displaystyle\bigcup\limits_{i=p+1}^{2p+1}}
(\tau_{G}(P)\cap\zeta_{i})\right\vert \geq\sum_{i=1}^{p+1}(p+1-i)=\frac
{p(p+1)}{2}\text{.}%
\]

As $\left\vert \tau_{G}(P)\right\vert =3p$, we conclude that $\frac{p+1}%
{2}\leq3$; this is impossible as $n\geq13$.

Case 3: $n=11$. The argument of case 2 applies, except for the last sentence.
We have $V(G)-P=\{v_{6},\ldots,v_{11}\}$, and for each $i\in\{6,7,8,9,10,11\}$%
\ we have a unique transverse circuit $\zeta_{i}\subseteq\tau_{G}(P\cup
\{v_{i}\})$. We use $\kappa_{i}$ to denote the element of $\zeta_{i}\cap
\tau_{G}(v_{i})$. If any one element of $\tau_{G}(P)$ is included in as many
as three of $\zeta_{6},\ldots,\zeta_{11}$ then counting elements as in case 2 tells us that $\tau
_{G}(P)\cap(\zeta_{6}\cup \cdots \cup\zeta_{11})$ has at least $5+4+4+2+1 = 16 > 15 = 3p = \left\vert \tau_{G}(P)\right\vert$ elements,
an impossibility. Consequently, no element of $\tau_{G}(P)$ is included in more
than two of the circuits $\zeta_{6},\ldots,\zeta_{11}$. As each $\zeta_{i}$
includes 5 elements of $\tau_{G}(P)$, and $\tau_{G}(P)$ has only 15 elements,
we conclude that each element of $\tau_{G}(P)$ appears in precisely two of
$\zeta_{6},\ldots,\zeta_{11}$.

Consider the total of the six column-sums corresponding to $\zeta_{6},\ldots,\zeta_{11}$. On the one hand, each column corresponding to an element of $\tau_{G}(P)$ appears twice; as we are working over $GF(2)$, the total equals the sum of the columns of $IAS(G)$ corresponding to $\kappa_{6},\ldots,\kappa_{11}$. On the other hand, each $\zeta_{i}$ is a circuit, so the total is $0$. As there is no transverse circuit of size $<6$, the subtransversal $\{\kappa_{6},\ldots,\kappa_{11}\}$ is a transverse circuit.

Recall that each of the transverse circuits $\zeta_{6},\ldots,\zeta_{11}$ intersects each of the vertex triples $\tau_{G}(v_{1}),\ldots,\tau_{G}(v_{5})$ precisely once, each element of one of these vertex triples appears in precisely two of $\zeta_{6},\ldots,\zeta_{11}$, and $\left\vert
\zeta_{i}\cap\zeta_{j}\right\vert \leq1$ $\forall i\neq j$. We claim that these constraints make it possible to index $v_1,\ldots,v_{11}$ and label the elements of $\tau_G(P)$ in such a way that for $1\leq k\leq5$ the vertex triple $\tau_G(v_k)$ is $\{\kappa_{k},\lambda_{k},\mu_{k}\}$ and the table below is correct, in the sense that the two
numbers in the table location corresponding to each element of $\tau_{G}(P)$ provide the two indices $i,j$ such that $\zeta_{i}\cap \zeta_{j}$ includes that element. 

\[%
\begin{tabular}
[c]{|c|c|c|c|c|c|}\hline
$\kappa$ & 6,7 & 6,8 & 8,10 & 7,9 & 9,10\\\hline
$\lambda$ & 8,9 & 7,10 & 6,9 & 6,10 & 7,8\\\hline
$\mu$ & 10,11 & 9,11 & 7,11 & 8,11 & 6,11\\\hline
$k$ & 1 & 2 & 3 & 4 & 5\\\hline
\end{tabular}
\
\]

We proceed to verify the claim. Without loss of generality we may presume that $v_{1},\ldots,v_{11}$ have been indexed in such a way that $\tau_G(v_1)$ contains both $\zeta_{6}\cap\zeta_{7}$ and $\zeta_{8}\cap\zeta_{9}$, while $\tau_G(v_2)$ contains $\zeta_{6}\cap\zeta_{8}$. The third element of $\tau_G(v_1)$ must be the one element of $\zeta_{10}\cap\zeta_{11}$, so the elements of $\tau_{G}(v_{1})$ may be denoted $\kappa_{1},\lambda_{1},\mu_{1}$ in such a way that the $k=1$ column of the table above correctly records the appearances of elements of $\tau_G(v_1)$ in $\zeta_{6},\ldots,\zeta_{11}$. As $\tau_G(v_2)$ contains $\zeta_{6}\cap\zeta_{8}$ it is not possible that $\tau_G(v_2)$ also contains $\zeta_{7}\cap\zeta_{9}$; for if it did, the third element of $\tau_G(v_2)$ would be the element of $\zeta_{10}\cap\zeta_{11}$, which appears in $\tau_G(v_1)$. Interchanging indices of $v_{10}$ and $v_{11}$ if necessary, we may presume that $\tau_G(v_2)$ contains $\zeta_{7}\cap\zeta_{10}$. Then we may name the elements of $\tau_G(v_2)$ in such a way that the $k=2$ column of the table above is correct. We may presume that $\tau_G(v_3)$ contains $\zeta_{6}\cap\zeta_{9}$. There is then only one value of $i$ for which $\tau_G(v_3)$ could possibly contain $\zeta_{i}\cap\zeta_{10}$: $i\in\{6,9\}$ is impossible as we have already identified an element of $\tau_G(v_3)$ in $\zeta_{6}\cap\zeta_{9}$, and $i\in\{7,11\}$ is impossible because $\tau_G(v_1)$ contains $\zeta_{10}\cap\zeta_{11}$ and $\tau_G(v_2)$ contains $\zeta_{7}\cap\zeta_{10}$. We conclude that $\tau_G(v_3)$ contains $\zeta_{8}\cap\zeta_{10}$. The third element of $\tau_G(v_3)$ must then be shared with $\zeta_{7}\cap\zeta_{11}$, and we can name the elements of $\tau_G(v_3)$ so that the $k=3$ column of the table is correct. Interchanging the indices of $v_4$ and $v_5$ if necessary, we may presume that $\tau_G(v_4)$ contains $\zeta_{6}\cap\zeta_{10}$ and $\tau_G(v_5)$ contains $\zeta_{6}\cap\zeta_{11}$. Then there is only one value of $i$ for which $\tau_G(v_4)$ could possibly contain $\zeta_{i}\cap\zeta_{11}$: $i\in\{7,9,10\}$ is impossible because of the elements already assigned to $\tau_G(v_1)$, $\tau_G(v_2)$ and $\tau_G(v_3)$; and $i=6$ is impossible because $\tau_G(v_4)$ contains an element of $\zeta_{6}\cap\zeta_{10}$. It follows that $\tau_G(v_4)$ contains $\zeta_{8}\cap\zeta_{11}$. The third element of $\tau_G(v_4)$ must then be the element of  $\zeta_{7}\cap\zeta_{9}$, and we may name the elements of $\tau_G(v_4)$ so that the $k=4$ column of the table above is correct. The $k=5$ column of the table is then forced, except for the assignment of $\kappa,\lambda,\mu$ labels to the elements of $\tau_G(v_5)$.

We now claim that the subtransversal $\{\kappa_{1},\ldots,\kappa_{10}\}$  is an independent set of $M$. Suppose the claim is false; then there is a subset $\zeta \subseteq \{\kappa_{1},\ldots,\kappa_{10}\}$ such that the columns of $IAS(G)$ corresponding to elements of $\zeta$ sum to $0$. Let $Y=\{y \geq 6 \mid \kappa_y \in \zeta \}$, let $Z=\{\zeta\} \cup \{\zeta_y \mid y \in Y \}$, and let $\gamma=\{x \in \tau_G(P) \mid x \text{ is included in an odd number of elements of }Z \}$. Consider the sum of columns of $IAS(G)$ corresponding to elements of elements of $Z$. As we are working over $GF(2)$, any column that appears an even number of times in the sum contributes $0$. Each column corresponding to a $\kappa_y$ with $y \in Y$ appears twice in the sum, once in $\zeta$ and once in $\zeta_y$, so these columns contribute $0$. It follows that the sum of columns of $IAS(G)$ corresponding to elements of elements of $Z$ is the same as the sum of columns of $IAS(G)$ corresponding to elements of $\gamma$. This sum must be $0$, as the individual column-sums corresponding to $\zeta$ and the $\zeta_y$ are all $0$. We conclude that the sum of columns of $IAS(G)$ corresponding to elements of $\gamma$ is $0$. Let $S(\gamma)$ be the subtransversal associated to $\gamma$, as in Definition~\ref{subtran}. Then $S(\gamma) \subseteq \tau_G(P)$, so $S(\gamma)$ does not contain any transverse circuit of $M$. According to Lemma~\ref{dominate}, it follows that $S(\gamma)=\emptyset$. That is, for each $i \in \{1,\ldots,5\}$ the intersection $\gamma \cap \tau_G(v_i)$ is either $\emptyset$ or $\tau_G(v_i)$. Inspecting the table above, though, we see that no nonempty set $Y \subseteq \{6,\ldots,10\}$ respects the requirement that every $i \in \{1,\ldots,5\}$ has $\gamma \cap \tau_G(v_i) \in \{\emptyset,\tau_G(v_i)\}$. For instance, if $9 \in Y$ then $\mu_2 \in \gamma$, so $\gamma \cap \tau_G(v_2)=\tau_G(v_2)$, so $Y$ must include precisely one of $6,8$ and precisely one of $7,10$; but $Y=\{6,7,9\}$ and $Y=\{6,9,10\}$ both violate $\gamma \cap \tau_G(v_3) \in \{\emptyset,\tau_G(v_3)\}$, and $Y=\{7,8,9\}$ and $Y=\{8,9,10\}$ both violate $\gamma \cap \tau_G(v_1) \in \{\emptyset,\tau_G(v_1)\}$. Of course $Y=\emptyset$ is impossible too, as $\{\kappa_{1},\ldots,\kappa_{5}\}$ is independent. These observations verify the claim.

As $\{\kappa_{6},\ldots,\kappa_{11}\}$ is a circuit of $M$, the claim implies that $\{\kappa_{1},\ldots,\kappa_{10}\}$ is an independent subtransversal whose closure includes $\kappa_{11}$. As explained in Section
4 of the first paper in this series~\cite{BT1}, after replacing $G$ with a locally
equivalent simple graph (if necessary) we may presume that $\kappa_{i}%
=\phi_{G}(v_{i})$ for $1\leq i\leq10$, and $\kappa_{11}=\chi_{G}(v_{11})$. (N.b. Our definition of local equivalence includes loop complementations, so we lose no generality when we presume that $G$ is simple.) The
fact that $\{\kappa_{6},\ldots,\kappa_{11}\}$ is a transverse circuit implies
that $N_{G}(v_{11})=\{v_{6},\ldots,v_{10}\}$. We have no information about
adjacencies among the vertices of $N_{G}(v_{11})$, but we can determine the
other adjacencies in $G$ as follows.

Let $S=\{\kappa_1,\kappa_2,\kappa_3,\kappa_4,\kappa_5,\lambda_1,\lambda_2,\lambda_3,\lambda_4,\lambda_5\}$. The closure of $S$ in $M$ contains an element $x \notin S$ if and only if $x$ is included in a circuit of $M$ whose other elements are contained in the closure of $S$. It follows that the closure of $S$ contains $\mu_1,\ldots,\mu_5$ (which are included in vertex triples in $\tau_G(P)$) and $\kappa_6,\ldots,\kappa_{11}$ (which are included in $\zeta_6,\ldots,\zeta_{11}$). We conclude that the submatroid $M\mid(\tau_{G}(P)\cup\{\kappa_{6},\ldots,\kappa_{11}\})$ is spanned by $S$, so its rank is $\leq 10$.

As $B=\{\kappa_{1}=\phi_{G}(v_{1}),\ldots,\kappa_{10}=\phi_{G}(v_{10})\}$ is an independent set of rank $10$, $B$ must be a basis of the submatroid $M\mid(\tau_{G}(P)\cup\{\kappa_{6},\ldots,\kappa_{11}\})$. We can find fundamental circuits with respect to $B$ by searching for sums (symmetric differences) of the $\zeta_i$ which contain only one element not included in $B$. Here are some of these fundamental circuits:

\setlength{\tabcolsep}{3pt}
\begin{tabular}{lllll}
For $\mu_{1}$: & $\{\mu_{1},\phi_{G}(v_{2}),\phi_{G}(v_{4}),\phi_{G}(v_{6}),\phi_{G}(v_{7}),\phi_{G}(v_{10})\}$ & $=$ & $\zeta_6 \Delta \zeta_7 \Delta \zeta_{10}$.\\

For $\mu_{2}$: & $\{\mu_{2},\phi_{G}(v_{1}),\phi_{G}(v_{3}),\phi_{G}(v_{6}),\phi_{G}(v_{8}),\phi_{G}(v_{9})\}$ & $=$ & $\zeta_6 \Delta \zeta_8 \Delta \zeta_{9}$.\\

For $\mu_{3}$: & $\{\mu_{3},\phi_{G}(v_{2}),\phi_{G}(v_{5}),\phi_{G}(v_{7}),\phi_{G}(v_{8}),\phi_{G}(v_{10})\}$ & $=$ & $\zeta_7 \Delta \zeta_8 \Delta \zeta_{10}$.\\

For $\mu_{4}$: & $\{\mu_{4},\phi_{G}(v_{1}),\phi_{G}(v_{5}),\phi_{G}(v_{7}),\phi_{G}(v_{8}),\phi_{G}(v_{9})\}$ & $=$ & $\zeta_7 \Delta \zeta_8 \Delta \zeta_{9}$.\\

For $\mu_{5}$: & $\{\mu_{5},\phi_{G}(v_{3}),\phi_{G}(v_{4}),\phi_{G}(v_{6}),\phi_{G}(v_{9}),\phi_{G}(v_{10})\}$ & $=$ & $\zeta_6 \Delta \zeta_9 \Delta \zeta_{10}$.\\

For $\kappa_{11}$: & $\{\kappa_{11},\phi_{G}(v_{6}),\phi_{G}(v_{7}),\phi_{G}(v_{8}),\phi_{G}(v_{9}),\phi_{G}(v_{10})\}$ & $=$ & $\zeta_6 \Delta \zeta_7 \Delta \zeta_8 \Delta \zeta_9 \Delta \zeta_{10}$.

\end{tabular}

For $i \in \{1,\ldots,5\}$, the fundamental circuit of $\lambda_i$ is equal to the symmetric difference of $\tau_{G}(v_i)$ and the fundamental circuit of $\mu_i$.

\begin{figure}[tb]%
\centering
\includegraphics[scale=0.8]{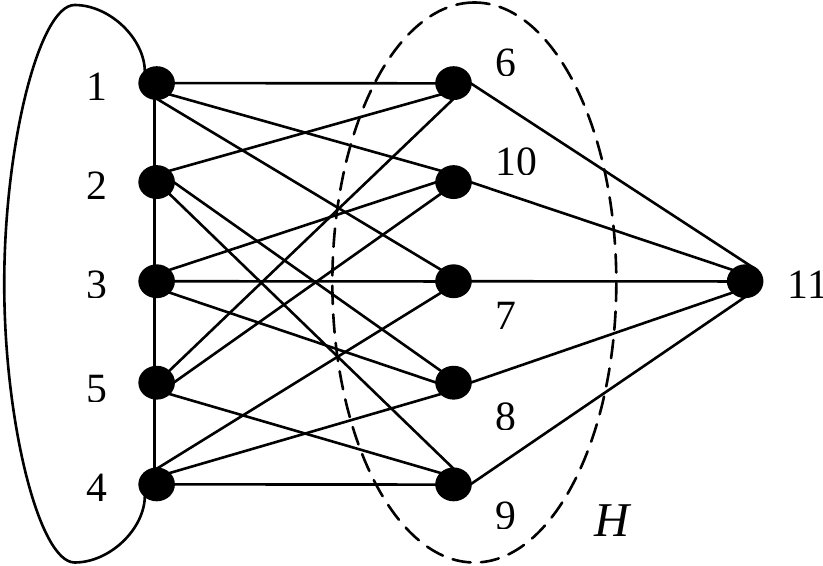}%
\caption{The case $n=11$, with unspecified edges in the induced subgraph $H$.}%
\label{isomch3f2}%
\end{figure}

The fundamental circuits indicate that $G$ is of the form pictured in Figure
\ref{isomch3f2} (with each vertex $v_i$ abbreviated by $i$), but we do not know which edges appear in the induced subgraph $H$ of $G$ with $V(H)=\{v_{6},v_{7},v_{8},v_{9},v_{10}\}$. Notice that, if we like, we may perform a local
complementation at $v_{11}$ without changing $M[IAS(G)]$ (up to isomorphism)
and without changing any information about $G$ that has been mentioned above.
The effect of a local complementation at $v_{11}$ is to complement all edges
in $H$, so we may assume that $\left\vert E(H)\right\vert \leq5$ without loss
of generality.

The argument for $n=11$ ends with the claim that no matter which edges appear in $H$, $M[IAS(G)]$ has a transverse circuit of size $\leq5$. The claim is justified by considering different possible configurations of edges in $H$, as follows.

If $H$ has an isolated vertex, then that vertex is of degree 4 in $G$, so its
neighborhood circuit is of size 5.

If $H$ is disconnected but has no isolated vertex then $H$ has a connected
component of size 2 and a connected component of size 3. There are two
distinct configurations of this type. For instance, if $\{v_{7},v_{8}\}$ forms a
connected component of $H$ then $\{\phi_{G}(v_{1})$, $\phi_{G}(v_{2})$,
$\psi_{G}(v_{7})$, $\psi_{G}(v_{8})\}$ is a transverse circuit. On the other
hand, if $\{v_{7},v_{9}\}$ forms a connected component of $H$ then so does
$\{v_{6},v_{8},v_{10}\}$. No matter which edges appear in the larger
component, there will be a transverse circuit of the form $\{\phi_{G}(v_{4})$,
$\rho_{G}(v_{6})$, $\sigma_{G}(v_{8})$, $\upsilon_{G}(v_{10})$, $\phi
_{G}(v_{11})\}$ with $\rho,\sigma,\upsilon\in\{\chi,\psi\}$.

If $H$ is connected then as $\left\vert E(H)\right\vert \leq5$, $H$ has a
vertex of degree 2 or a vertex of degree 4. For instance, if $v_{6}$ is of
degree 4 in $H$ then $\{\phi_{G}(v_{1})$, $\phi_{G}(v_{2})$, $\phi_{G}(v_{5}%
)$, $\psi_{G}(v_{6})$, $\chi_{G}(v_{11})\}$ is a transverse circuit. There are
three different configurations of degree-2 vertices. For instance, if
$N_{H}(v_{8})=\{v_{7}$, $v_{9}\}$ then $\{\chi_{G}(v_{1})$, $\phi_{G}(v_{4})$,
$\chi_{G}(v_{5})$, $\chi_{G}(v_{8})$, $\phi_{G}(v_{11})\}$ is a transverse
circuit; if $N_{H}(v_{8})=\{v_{9}$, $v_{10}\}$ then $\{\phi_{G}(v_{2})$,
$\chi_{G}(v_{5})$, $\phi_{G}(v_{6})$, $\chi_{G}(v_{8})$, $\phi_{G}(v_{11})\}$
is a transverse circuit; and if $N_{H}(v_{8})=\{v_{6}$, $v_{10}\}$ then
$\{\phi_{G}(v_{2})$, $\chi_{G}(v_{5})$, $\chi_{G}(v_{8})$, $\phi_{G}(v_{9})$,
$\phi_{G}(v_{11})\}$ is a transverse circuit.
\end{proof}

\subsection{Case 5 of Theorem \ref{vconnect}}

With Theorem \ref{degree} and Corollary \ref{lowdeg} proven, we know that if $n\geq5$ and $\kappa(M[IAS(G)])$ $=n$, then $n<7$. To complete the proof of Theorem \ref{vconnect}, we must show that such a $G$ is locally equivalent to either the cycle graph $C_{5}$ or the wheel graph $W_{5}$.

As $\kappa(M[IAS(G)])=n>3$, case 3 of Theorem \ref{vconnect} tells us that $G$ is prime. If $n=5$ we refer to Bouchet \cite{Bec}, who showed that every prime 5-vertex graph is locally equivalent to $C_{5}$. Suppose $n=6$. If $G$ is a circle graph then as discussed in \cite{BT2}, $G$ has a transverse circuit of size $\leq3$; but then Theorem \ref{degree} contradicts the hypothesis that $\kappa(M[IAS(G)])=n$. Consequently $G$ is not a circle graph. According to Bouchet's circle graph obstructions theorem \cite{Bco}, every non-circle graph with $n=6$ is locally equivalent to $W_{5}$. 

\section{A characterization of circle graphs}
\label{circle}

In this section we briefly discuss a way to use the ideas of this paper to characterize circle graphs. We refer to the second paper in the series~\cite{BT2} for definitions, and for the following.

\begin{theorem}
(\cite{BT2}) Let $G$ be the interlacement graph of an Euler system of a 4-regular graph $F$. If $F$ has a circuit of size $q$, then $G$ has a transverse circuit of size $\leq q$.
\end{theorem}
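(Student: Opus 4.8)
The plan is to recall the standard combinatorial bridge between Euler systems of $4$-regular graphs and their interlacement graphs, and then translate a short circuit of $F$ into a small transverse circuit of $M[IAS(G)]$. Let $F$ be a connected $4$-regular graph, fix an Euler system (a choice of transition at each vertex giving a single Euler circuit), and let $G$ be the corresponding interlacement graph, with $V(G)=V(F)$; a vertex of $G$ is looped or unlooped according to the type of transition chosen. The key structural fact, which I would quote from~\cite{BT2}, is that the transverse matroids of $G$ are exactly the matroids of the various Euler systems of $F$: changing the transition at a vertex $v$ corresponds to replacing $\phi_G(v)$, $\chi_G(v)$, $\psi_G(v)$ by the appropriate member of the vertex triple, i.e. to moving within $\tau_G(v)$. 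Concretely, the columns of $IAS(G)$ encode, for each possible transition system, the incidence vector of each vertex against the edge set of the resulting Euler circuit, and a subtransversal sums to $0$ in $IAS(G)$ precisely when the corresponding set of vertices, with the indicated transitions, bounds in the cycle space of the associated $4$-regular graph.

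First I would take a circuit $C$ of $F$ of length $q$, say passing through vertices $w_1,\dots,w_m$ with $m\le q$ (a vertex may be traversed twice by $C$, once through each of its two pairs of half-edges, which is why $m\le q$ rather than $m=q$). At each $w_i$, the circuit $C$ uses one of the three transition pairings available at $w_i$; this is exactly the data of choosing which element of $\tau_G(w_i)$ to select. Let $\zeta=\{\,\xi_G(w_i)\mid 1\le i\le m\,\}$ be the resulting subtransversal, where $\xi_G(w_i)\in\tau_G(w_i)$ is the element picked out by the transition that $C$ realizes at $w_i$. The claim is that $\zeta$ contains a transverse circuit, and since $|\zeta|=m\le q$ this gives the theorem.

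The main step — and the place where I expect the real work to sit — is verifying that the columns of $IAS(G)$ indexed by $\zeta$ sum to the zero vector over $GF(2)$, so that $\zeta$ is a dependent subtransversal and hence contains a transverse circuit. This is where one must unwind the precise dictionary from~\cite{BT2} between Euler systems and isotropic matroids: the sum of the $\zeta$-columns records, in its $u$-coordinate for $u\in V(F)$, the parity with which the closed walk $C$ passes through $u$ relative to the ambient transition system, and because $C$ is a genuine circuit (an eulerian sub-object of the relevant $4$-regular graph obtained by smoothing away the vertices not on $C$), every such parity is even. Phrased matroidally: $C$ determines an element of the binary cycle space of the transverse matroid obtained by selecting $\xi_G(w_i)$ at each $w_i\in V(C)$ and $\phi_G$ (say) elsewhere, and this cycle-space element is supported on $\zeta$. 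Once that parity computation is in hand, dependence of $\zeta$ is immediate and the existence of a contained transverse circuit of size $\le q$ follows. I would keep the write-up short by citing the needed correspondence lemmas from~\cite{BT2} rather than re-deriving them, so that the only genuinely new content is the one-line observation that a short circuit of $F$ selects a short dependent subtransversal of $M[IAS(G)]$.
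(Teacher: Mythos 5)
The paper offers no argument for this statement at all --- it is quoted from~\cite{BT2} --- so the only thing to assess is whether your sketch would actually work, and its key step does not. You claim that the columns of $IAS(G)$ indexed by $\zeta$ (one element of $\tau_G(w)$ for \emph{every} vertex $w$ the circuit passes through, chosen according to the circuit's transition at $w$) sum to zero, ``because $C$ is a genuine circuit\dots every such parity is even.'' This fails exactly at the doubly-traversed vertices you allow. Concrete example: let $F$ have vertices $a,b,c$ with two parallel edges between each pair, and Euler circuit $C=a\,e_1\,b\,e_2\,c\,e_3\,a\,e_4\,b\,e_5\,c\,e_6\,a$, so that $G=K_3$ (unlooped). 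Take the circuit $D=a\,e_1\,b\,e_2\,c\,e_5\,b\,e_4\,a$ of size $q=4$. At $a$ the pair $\{e_1,e_4\}$ and at $c$ the pair $\{e_2,e_5\}$ are not pairs of $C$'s transitions, so $\zeta$ picks a $\chi$- or $\psi$-element there; at $b$, which $D$ traverses twice, $D$ induces exactly $C$'s transition, so $\zeta$ picks $\phi_G(b)$. Whichever non-$\phi$ elements are selected at $a$ and $c$, the $b$-coordinate of the column sum is $A(b,a)+1+A(b,c)=1+1+1=1\neq0$, so $\zeta$ does not sum to zero and your parity heuristic is false. In the extreme case where $D$ is itself an Euler circuit, $\zeta$ is a full transversal of a transition system with a single circuit and is in fact \emph{independent}, so even the weaker claim that $\zeta$ is dependent fails in general.

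The correct localization is to the vertices that $D$ traverses exactly once. Extend $D$'s transitions to a full transition system $T$ of $F$ by using the Euler system's transitions elsewhere; then $D$ is one of the circuits of the circuit partition induced by $T$, and the singly-traversed vertices of $D$ are precisely the non-loop edges of the touch graph incident with $D$, i.e.\ the star of $D$. Under the correspondence (this is the substance of~\cite{BT2}: the transverse matroid on the transversal of $T$ is the bond matroid of the touch graph of $T$), that star is an element of the cycle space of the transverse matroid, hence a disjoint union of transverse circuits, and when nonempty --- which holds unless $D$ is an Euler circuit, a trivial case --- it yields a transverse circuit of size at most $q$. Note this is a genuine interlacement-parity/touch-graph lemma, not an immediate count of passes through each vertex, and your write-up defers it wholesale to unspecified ``correspondence lemmas from~\cite{BT2},'' which is circular here since the statement under review is itself the result being imported from~\cite{BT2}. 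So as written the proposal has both a false key step and no self-contained argument for the dependency it needs.
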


Proposition \ref{smallcirc} immediately implies the following.

\begin{corollary}
\label{circgirth}Let $G$ be the interlacement graph of an Euler system of a
4-regular graph $F$. If the girth of $F$ is $g(F)\leq\frac{n}{2}$, then
$\kappa(M[IAS(G)])\leq2g(F)-1$.
\end{corollary}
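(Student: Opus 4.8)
The plan is to combine the preceding theorem with Proposition~\ref{smallcirc}. The theorem just quoted says that if $F$ has a circuit of size $q$, then the interlacement graph $G$ has a transverse circuit of size $\leq q$. Applying this with $q = g(F)$, a smallest circuit of $F$, we obtain a transverse circuit of $M = M[IAS(G)]$ of size at most $g(F)$. Hence the quantity $q'$ defined in Proposition~\ref{smallcirc} as the cardinality of a smallest transverse circuit of $M$ satisfies $q' \leq g(F)$.

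Next I would invoke the hypothesis $g(F) \leq \frac{n}{2}$. Since $q' \leq g(F) \leq \frac{n}{2} < \frac{n+1}{2}$, the smallest transverse circuit of $M$ has size $q' < \frac{n+1}{2}$, which is exactly the hypothesis needed to apply Proposition~\ref{smallcirc}. That proposition then yields $2q' - 1 \geq \kappa(M)$. Combining with $q' \leq g(F)$ gives
\[
\kappa(M[IAS(G)]) \leq 2q' - 1 \leq 2g(F) - 1,
\]
which is the claimed inequality.

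There is essentially no obstacle here: the statement is a two-line deduction, chaining the quoted theorem (to pass from a short circuit in $F$ to a short transverse circuit in $M$) with Proposition~\ref{smallcirc} (to pass from a short transverse circuit to a bound on vertical connectivity), after checking the one numerical hypothesis $g(F) \leq \frac{n}{2}$ is enough to trigger the proposition's hypothesis $q' < \frac{n+1}{2}$. The only point worth stating carefully is that Proposition~\ref{smallcirc} is phrased in terms of the \emph{minimum} transverse circuit size $q'$, so one should note $q' \leq g(F)$ explicitly rather than applying the proposition directly to the particular transverse circuit produced by the theorem.
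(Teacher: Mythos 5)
Your proposal is correct and is exactly the argument the paper intends: the paper simply states that Proposition~\ref{smallcirc} together with the quoted theorem from~\cite{BT2} ``immediately implies'' the corollary, and your chaining of $q' \leq g(F) \leq \frac{n}{2} < \frac{n+1}{2}$ with $\kappa(M) \leq 2q'-1 \leq 2g(F)-1$ is that deduction spelled out.
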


With a little more work we obtain an upper bound on the vertical connectivity of the isotropic matroid of a circle graph.

\begin{corollary}
\label{circgirth2}Let $G$ be a circle graph. Then $\kappa(M[IAS(G)])\leq \max \{5,n-3\}$.
\end{corollary}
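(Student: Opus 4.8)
The plan is to reduce everything to the inequality $\kappa(M[IAS(G)])\le 2\,g(F)-1$ supplied by Corollary~\ref{circgirth}, applied to a $4$-regular graph $F$ whose interlacement graph is $G$, and then to control $g(F)$ in terms of $n$. First I would dispose of the degenerate cases: if $G$ is disconnected then $\kappa(M[IAS(G)])=1\le\max\{5,n-3\}$, and if $n\le 5$ then $\kappa(M[IAS(G)])\le r(M[IAS(G)])=n\le 5\le\max\{5,n-3\}$. So assume $G$ is connected with $n\ge 6$, and write $G$ as the interlacement graph of an Euler circuit of a connected $4$-regular multigraph $F$ with $|V(F)|=n$. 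If $F$ has a loop then $G$ has an isolated vertex, contradicting connectedness; if $F$ has a pair of parallel edges then $F$ has a circuit of size $2$, so by the theorem of~\cite{BT2} quoted above $G$ has a transverse circuit of size $2$, whence $G$ has a pendant vertex or a pair of twins and $\kappa(M[IAS(G)])\le 3$ by Theorem~\ref{cconnect}. So I may assume $F$ is a simple connected $4$-regular graph, and in particular $n\ge 5$.

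Next I would bound $g=g(F)$. A short count in the neighbourhood of a vertex $v$ — $N_F(v)$ is an independent set of four vertices, each of which must send three further edges into the $n-5$ remaining vertices — shows that a simple triangle-free $4$-regular graph has at least $8$ vertices and that none exists on exactly $9$. Hence $g\le 3$ when $n\in\{6,7,9\}$; when $n=8$ either $g\le 3$ or $g=4$; and when $n\ge 10$ one checks, using the Moore-type lower bound on the order of a $4$-regular graph of girth $\ge 5$, that $g\ge 4$ implies $2g-1\le n-3$. In every case $g\le n/2$, so Corollary~\ref{circgirth} gives $\kappa(M[IAS(G)])\le 2g-1$, and this is $\le\max\{5,n-3\}$ — \emph{except} in the single case $n=8$, $g(F)=4$, where Corollary~\ref{circgirth} only yields $\kappa(M[IAS(G)])\le 7$.

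The case $n=8$, $g(F)=4$ is the real obstacle, and it cannot be settled by producing a small transverse circuit: the interlacement graphs of the Euler circuits of $K_{4,4}$ have smallest transverse circuits of size $4$ (see the remark after Proposition~\ref{smallcirc}), so Proposition~\ref{smallcirc} only gives $\kappa(M[IAS(G)])\le 7$ for them too. Instead one must work with the cut-rank function directly and exhibit a subset $X\subseteq V(G)$ with $|X|,|V(G)-X|\ge 3$ and $c_G(X)\le 2$; Proposition~\ref{purechar} and Corollary~\ref{vconn} then force $\kappa(M[IAS(G)])\le 2\cdot 2+1=5$. Here $F$ has a special shape — a shortest cycle $\gamma$ of $F$ is an induced $4$-cycle, and $F$ is the union of two induced $4$-cycles together with an $8$-edge cut — and the natural candidate is $X=V(\gamma)$, for which one wants the $GF(2)$-rank of the submatrix $A(G)[V(\gamma),V(G)-V(\gamma)]$ to be at most $2$, to be read off from how the Euler circuit of $F$ crosses that cut. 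Establishing this rank bound is the one step I expect to need genuine care; should the direct analysis prove awkward, the case is finite — there are only finitely many triangle-free simple $4$-regular graphs on $8$ vertices and finitely many Euler systems of each — and can be closed by the computational method already used for Theorem~\ref{halfcirc}. Everything else is routine bookkeeping around Corollary~\ref{circgirth}.
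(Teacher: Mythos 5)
Your overall route is the same as the paper's: split according to the girth of a $4$-regular graph $F$ realizing $G$, apply Corollary~\ref{circgirth} when $g(F)\leq 3$ or when $n$ is large enough that the Moore bound forces $2g(F)-1\leq n-3$, and treat small $n$ with girth $4$ separately. Your bookkeeping in the easy cases is fine, and your hand-count replacing the paper's appeal to Meringer's tables for $n\in\{6,7,9\}$, together with the explicit handling of loops and parallel edges in $F$, is a legitimate variant.

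The gap is precisely the case you flag: $n=8$, $g(F)=4$, where $\max\{5,n-3\}=5$ but Corollary~\ref{circgirth} and Proposition~\ref{smallcirc} only give $7$. You correctly reduce this to exhibiting $X$ with $\left\vert X\right\vert ,\left\vert V(G)-X\right\vert \geq 3$ and $c_G(X)\leq 2$, but you never establish it: your candidate $X=V(\gamma)$ for a shortest cycle $\gamma$ of $F$ is unverified, and there is no a priori reason that the cut-rank of the interlacement graph across that particular partition is at most $2$ for every Euler circuit -- the $8$-edge cut of $F$ only gives the weak bound $c_G(X)\leq 4$, and which $4$-subsets achieve cut-rank $2$ depends on the Euler circuit (some such subset must exist, since $\kappa(M[IAS(G)])$ is a local-equivalence invariant, but not necessarily the one you name). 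So as written, the one case that carries the content of the corollary rests either on an unproven rank claim or on a computation you defer. The paper closes it concretely: by Meringer's tables the unique $4$-regular graph of girth $>3$ with $6\leq n\leq 9$ is $K_{4,4}$; it then takes the explicit adjacency matrix of an interlacement graph of an Euler circuit of $K_{4,4}$ from~\cite{BT2} and displays a $4$-element vertex set of cut-rank $2$, giving $\kappa=5$, with invariance under local equivalence covering all Euler systems of $K_{4,4}$. Your proposed fallback (a finite check in the style of Theorem~\ref{halfcirc}) would indeed close the gap, but it is not part of the argument you have given.
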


\begin{proof}
Every graph with $n\leq5$ is a circle graph, and satisfies $\kappa(M[IAS(G)])\leq n \leq5$. If $n\geq6$ and $G$ is associated with a 4-regular graph $F$ with $g(F)\leq3$, then Corollary~\ref{circgirth} tells us that $\kappa(M[IAS(G)])\leq 5$.

According to the tables of Meringer \cite{Me}, for $6\leq n\leq9$ the only 4-regular graph of girth $>3$ is $K_{4,4}$. It turns out that circle graphs associated with $K_{4,4}$ have $\kappa(M[IAS(G)])=5$. To see why, recall the observation of \cite{BT2} that
\[
\noindent%
\begin{pmatrix}
0 & 1 & 0 & 0 & 1 & 0 & 0 & 1\\
1 & 0 & 0 & 0 & 1 & 1 & 0 & 0\\
0 & 0 & 0 & 1 & 1 & 1 & 0 & 0\\
0 & 0 & 1 & 0 & 1 & 0 & 0 & 1\\
1 & 1 & 1 & 1 & 0 & 0 & 1 & 0\\
0 & 1 & 1 & 0 & 0 & 0 & 1 & 0\\
0 & 0 & 0 & 0 & 1 & 1 & 0 & 1\\
1 & 0 & 0 & 1 & 0 & 0 & 1 & 0
\end{pmatrix}
\]
is the adjacency matrix of an interlacement graph $\mathcal{I}(C)$ of an Euler circuit $C$ of $K_{4,4}$. Note that if $X$ includes the vertices corresponding to the last four rows and columns then
\[
c_{\mathcal{I}(C)}(X)=r(A[X,V(\mathcal{I}(C))-X])=
r\left(\begin{pmatrix}
1 & 0 & 0 & 1\\
1 & 1 & 0 & 0\\
1 & 1 & 0 & 0\\
1 & 0 & 0 & 1
\end{pmatrix}
\right)=2 \text{,}
\]
so $\kappa(M[IAS(\mathcal{I}(C))])\leq5$. As $\mathcal{I}(C)$ is prime, it follows that $\kappa(M[IAS(\mathcal{I}(C))])=5$. We conclude that every circle graph with $n\leq9$ has $\kappa(M[IAS(G)])\leq5$.

If $n\geq10$ and $G$ is the interlacement graph of an Euler system of a 4-regular graph of girth 4, then Corollary~\ref{circgirth} tells us that $\kappa(M[IAS(G)])\leq7$. As $7\leq n-3$, the inequality asserted in the present corollary is satisfied.

It remains to consider circle graphs with $n\geq10$ that are associated with 4-regular graphs of girth $>4$. The Moore bounds for the order of a regular graph of given girth are well known; see~\cite[Chapter 23]{B} for a discussion. The Moore bounds tell us that the order of a 4-regular graph of girth $g>4$ satisfies the following inequality.
\[
n\geq
\begin{cases}
1+4 \cdot (1+3+\cdots +3^{(g-3)/2}) & \text{if }g\text{ is odd}\\
1+4 \cdot (1+3+\cdots +3^{(g-4)/2})+3^{(g-2)/2} & \text{if }g\text{ is even}
\end{cases}.
\]
If $g\geq5$ is odd then we deduce that 
\[
n\geq 1+4+12((g-3)/2)=5+6g-18=2g+4g-13\geq 2g+7 \text{,}
\]
and if $g\geq6$ is even then we deduce that
\[
n\geq 1+4+12((g-4)/2)+9=5+6g-15=2g+4g-10\geq 2g+14 \text{.}
\]
We require only the fact that $g>4$ implies $n> 2g+2$, as this allows us to apply Corollary~\ref{circgirth} and conclude that $\kappa(M[IAS(G)])\leq 2g-1 < n-3$.
\end{proof}

We should mention that the bound $\kappa(M[IAS(G)])\leq n-3$ of Corollary~\ref{circgirth2} is sharp for $n=10$, although it is certainly not sharp for $n>10$. Consulting Meringer's tables \cite{Me}, we see that there are two 4-regular graphs of order 10 and girth $>3$. One of these graphs is obtained from $C_{5}$ by doubling every vertex and quadrupling every edge, and the other is obtained from $K_{5,5}$ by removing a perfect matching. Computations performed using the matroid module of SageMath \cite{sageMatroid, sage} indicate that circle graphs associated with these two 4-regular graphs have $\kappa(M[IAS(G)])=7$.

It is not hard to see that $\kappa(M[IAS(W_{5})])=6$ and $\kappa(M[IAS(W_{7})])=7$, so according to Corollary~\ref{circgirth2} the fact that $W_{5}$ and $W_{7}$ are not circle graphs is detected by the high vertical connectivity of their isotropic matroids. Bouchet gave three forbidden vertex-minors for circle graphs \cite{Bco}: $W_{5}$ and $W_{7}$ are two of them, and the third is a bipartite graph denoted $BW_{3}$. The vertical connectivity of the isotropic matroid of $BW_{3}$ is only 5, as $BW_{3}$ has vertices of degree 2, so in order to exclude $BW_{3}$ we must use another property of circle graphs.

\begin{theorem}
The family $\mathcal{C}$ of circle graphs is determined by these three properties.
\begin{itemize}
\item $\mathcal{C}$ is closed under vertex-minors.
\item If $G\in \mathcal{C}$ then all transverse matroids of $G$ are cographic.
\item If $G\in \mathcal{C}$ then $\kappa(M[IAS(G)])\leq \max \{5,\left\vert V(G) \right\vert -3\}$.
\end{itemize}
\end{theorem}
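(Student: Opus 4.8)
The plan is to read the statement as two assertions---that every circle graph has the three listed properties, and that $\mathcal{C}$ is the \emph{largest} class of graphs enjoying all three---and to prove the harder assertion using Bouchet's list of forbidden vertex-minors for circle graphs. The first assertion is quick: closure of $\mathcal{C}$ under vertex-minors is classical (a vertex-minor of an interlacement graph is again an interlacement graph), the connectivity bound is exactly Corollary~\ref{circgirth2}, and the fact that every transverse matroid of a circle graph is cographic is established in \cite{BT2}; concretely, under the standard identification of $M[IAS(\mathcal{I}(C))]$ with data attached to a $4$-regular graph $F$, each transverse matroid of $G=\mathcal{I}(C)$ is realized as the cocycle matroid of a $4$-regular graph obtained from $F$, hence cographic essentially by construction, and passing between Euler systems or applying local complementations does not leave that class.

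For the second assertion, let $\mathcal{F}$ be any class of graphs closed under vertex-minors such that every member $H$ has only cographic transverse matroids and satisfies $\kappa(M[IAS(H)])\le\max\{5,|V(H)|-3\}$; I must show $\mathcal{F}\subseteq\mathcal{C}$. Arguing by contradiction, pick $G\in\mathcal{F}\setminus\mathcal{C}$. By Bouchet's circle-graph obstructions theorem \cite{Bco}, $G$ has a vertex-minor isomorphic to $W_5$, $W_7$, or $BW_3$, and closure of $\mathcal{F}$ under vertex-minors puts a copy of that graph into $\mathcal{F}$, so it too must satisfy both remaining properties. But that fails in every case: $\kappa(M[IAS(W_5)])=6>5=\max\{5,|V(W_5)|-3\}$ and $\kappa(M[IAS(W_7)])=7>5=\max\{5,|V(W_7)|-3\}$, so $W_5$ and $W_7$ violate the connectivity bound; and since $BW_3$ has a vertex of degree $2$ we have only $\kappa(M[IAS(BW_3)])=5\le\max\{5,|V(BW_3)|-3\}$, which does not help, so $BW_3$ must instead be ruled out by the requirement that all its transverse matroids be cographic.

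The crux of the argument is therefore the single remaining claim: $BW_3$ has a transverse matroid that is not cographic. I would establish this concretely, by choosing a transversal of $W(BW_3)$---that is, for each vertex one of the corresponding $\phi$, $\chi$, $\psi$ columns of $IAS(BW_3)$---so that the resulting transverse matroid, being binary, contains a minor isomorphic to one of the excluded minors $F_7$, $F_7^{\ast}$, $M(K_5)$, $M(K_{3,3})$ for cographic binary matroids; and if no such minor is transparent by hand, one can simply check non-cographicness of one of the finitely many transverse matroids of $BW_3$ directly with the matroid module of SageMath \cite{sageMatroid, sage}, as is done elsewhere in the paper. I expect this verification to be the only genuine obstacle; everything else reduces to bookkeeping with facts already assembled.
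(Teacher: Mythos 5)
Your proposal is correct and takes essentially the same route as the paper: invoke Bouchet's obstruction theorem, rule out $W_5$ and $W_7$ via the connectivity bound of Corollary~\ref{circgirth2} (since $\kappa=6$ and $7$ exceed $\max\{5,n-3\}=5$ for $n=6,8$), and rule out $BW_3$ via the cographic condition. The single step you defer to hand or computer verification---that $BW_3$ has a non-cographic transverse matroid---is precisely the fact the paper imports from~\cite{BT2}, so your plan fills the same gap the paper fills by citation.
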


\subsection*{Acknowledgements}
We thank James Oxley for his useful comments regarding vertical connectivity. We are also grateful to an anonymous reader, whose careful reading and constructive comments resulted in several improvements in the paper. R.B.\ is a postdoctoral fellow of the Research Foundation -- Flanders (FWO).

\end{document}